\definecolor{navy}{HTML}{004d99}
\numberwithin{equation}{section}
\theoremstyle{plain}
\newtheorem{theorem}{Theorem}[section]
\newtheorem{lemma}[theorem]{Lemma}
\newtheorem{proposition}[theorem]{Proposition}
\newtheorem{hypothesis}[theorem]{Hypothesis}
\newtheorem{corollary}[theorem]{Corollary}
\theoremstyle{definition}
\newtheorem{definition}[theorem]{Definition}
\newtheorem{example}[theorem]{Example}
\newtheorem{remark}[theorem]{Remark}
\newtheorem{question}[theorem]{Question}
\let\c@equation\c@theorem 
\newcommand{\otensor}{\mathbin{\overline{\otimes}}}
\newcommand{\ohom}{\overline{\Hom}}
\DeclareMathOperator{\Ext}{Ext}
\DeclareMathOperator{\depth}{depth}
\DeclareMathOperator{\projdim}{projdim}
\DeclareMathOperator{\injdim}{injdim}
\DeclareMathOperator{\Kdim}{Kdim}
\DeclareMathOperator{\gldim}{gldim}
\DeclareMathOperator{\GKdim}{GKdim}
\DeclareMathOperator{\Hom}{Hom}
\DeclareMathOperator{\RHom}{RHom}
\newcommand{\Modleft}{\textup{-Mod}}
\newcommand{\Modright}{\textup{Mod-}}
\newcommand{\Modleftfd}{\textup{-Mod}_{\text{fd}}}
\newcommand{\Modrightfd}{\textup{Mod}_{\text{fd}}\textup{-}}
\newcommand{\modleft}{\textup{-mod}}
\newcommand{\mc}{\mathcal}
\newcommand{\mb}{\mathbb}
\newcommand{\kk}{\Bbbk}
\begin{document}

\title[A proof of the Brown--Goodearl Conjecture for 
weak Hopf algebras]
{A proof of the Brown--Goodearl Conjecture for 
module-finite weak Hopf algebras}

\author[Rogalski]{Daniel Rogalski}
\address{(Rogalski) Department of Mathematics,
University of California, San Diego,
La Jolla, CA 92093, USA}
\email{drogalski@ucsd.edu}

\author[Won]{Robert Won}
\address{(Won) Department of Mathematics, Box 354350,
University of
Washington, Seattle, WA 98195, USA}
\email{robwon@uw.edu}

\author[Zhang]{James J. Zhang}
\address{(Zhang) Department of Mathematics, Box 354350,
University of
Washington, Seattle, WA 98195, USA}
\email{zhang@math.washington.edu}


\begin{abstract}
Let $H$ be a weak Hopf algebra that is a finitely generated 
module over its affine center. We show that $H$ has finite self-injective
dimension and so the Brown--Goodearl Conjecture holds in this special weak Hopf setting.
\end{abstract}

\subjclass[2010]{16E10, 16T99, 18D10}
\keywords{weak Hopf algebra, injective dimension}

\maketitle

\setcounter{section}{-1}
\section{Introduction}
\label{xxsec0}
\noindent

In his Seattle lecture in 1997 \cite{Br1}, Brown posed several open questions about noetherian Hopf algebras which satisfy a polynomial identity. Since then, the homological 
properties of infinite-dimensional noetherian Hopf algebras have been 
investigated extensively. While quite a few of the questions in Brown's lecture 
have been answered, one important question, now called either 
the Brown--Goodearl Question or the Brown--Goodearl Conjecture, is still open:
\[
\textit{Does every noetherian Hopf algebra have finite injective dimension?}
\label{Q1}\tag{Q1}
\]
Or, asking for a slightly stronger property, 
\[
\textit{Is every noetherian Hopf algebra Artin--Schelter Gorenstein?}
\label{Q2}\tag{Q2}
\]
(The definition of Artin--Schelter Gorenstein will be recalled in Section~\ref{xxsec1}.) In recent years, the Brown--Goodearl Question has been posed in many lectures and 
survey papers \cite{Br1, Br2, Go}, as it is related to the existence 
of rigid dualizing complexes, and therefore related 
to the twisted Calabi--Yau property of these Hopf algebras. An affirmative 
answer to this question has many other consequences, especially in the 
study of the ring-theoretic properties of noetherian Hopf algebras. 

It is natural to ask the Brown--Goodearl Question for other classes of noetherian 
algebras that are similar to Hopf algebras, for example, weak 
Hopf algebras, braided Hopf algebras, Nichols algebras, and so on. In 
fact Andruskiewitsch independently asked the following question in 
2004 \cite[after Definition 2.1]{An}: if a Nichols algebra is a 
domain with finite Gelfand--Kirillov dimension, is it then 
Artin--Schelter regular (and therefore of finite global dimension)?

In this paper, we consider weak Hopf algebras, which are natural generalizations of Hopf algebras that 
have applications in conformal field theory, quantum field 
theory, and the study of operator algebras, subfactors, tensor categories, and fusion categories. One important fact is that any fusion category is 
equivalent to the category of modules over a weak Hopf algebra 
\cite{Ha1, Sz}, see also \cite[Theorem 2.20 and Corollary 2.22]{ENO}.
We refer the reader to Section~\ref{xxsec5} for the definition of a weak Hopf 
algebra and a few examples. Much of the existing literature on weak Hopf algebras has focused on the finite-dimensional case. For example, B{\" o}hm, Nill, and Szlach{\' a}nyi \cite[Theorem 3.11]{BNS} 
proved that every finite-dimensional 
weak Hopf algebra over a field is quasi-Frobenius, or equivalently, 
has (self-)injective dimension zero (see also \cite[Corollary 3.3]{Vec}). In contrast with the Hopf 
case, a finite-dimensional weak Hopf algebra may not be Frobenius
\cite[Proposition 2.5]{IK}. 

The main result of this paper is to affirmatively answer the 
analog of the Brown--Goodearl Question \eqref{Q1} for weak Hopf algebras that are module-finite 
over their affine centers. In fact, our proof works not just for weak Hopf algebras, 
but for any such algebra whose module category has a monoidal structure with certain basic properties.

In most of our results, we restrict our attention to algebras satisfying the following ring-theoretic hypothesis. 
\begin{hypothesis}
\label{xxhyp0.1}
Let $A$ be an algebra with center $Z(A)$. Suppose that 
$A$ is a finitely generated module over $Z(A)$ and that $Z(A)$ is 
a finitely generated algebra over the base field $\kk$. 
\end{hypothesis}

The favorable homological properties of Hopf algebras seem to arise primarily from the fact that there is an internal 
tensor product of modules over a Hopf algebra.  We abstract this idea here to study the following condition.
\begin{hypothesis}
\label{xxhyp0.2}
Let $A$ be an algebra over $\kk$.
Assume that there is a monoidal structure $\otensor$ on the category $A \Modleft$ of left $A$-modules, where $\otensor$ is bilinear on morphisms and biexact, such that every finite dimensional $M \in A \Modleft$ has a left dual $M^*$ in that monoidal category. 
Assume further that the same hypotheses hold for the category $A^{op}\Modleft$ of right $A$-modules.
\end{hypothesis}
We refer the reader to \cite{EGNO} for basic notions concerning monoidal categories; in any case, we will remind the reader of the undefined terms in Section~\ref{sec:tc}.
For any 
weak Hopf algebra $A$ which satisfies Hypothesis~\ref{xxhyp0.1}, the category $A \Modleft$ has a monoidal structure as in Hypothesis~\ref{xxhyp0.2}, where $M \otensor N$ is a subspace of the usual tensor product $M \otimes_\kk N$ (see \cite{BCJ,Nill}). 
If $A$ is a Hopf algebra or a quasi-bialgebra with antipode which satisfies Hypothesis~\ref{xxhyp0.1}, then $A$ also satisfies Hypothesis~\ref{xxhyp0.2} where $\otensor$ is the usual tensor product $\otimes_{\kk}$.

For a module $M$ over an algebra, let 
$\GKdim M$ denote the Gelfand--Kirillov dimension (or GK dimension) 
of $M$. (We refer to \cite{KL} for the definition of GK dimension.) We say an algebra $A$ is {\it homogeneous} if 
\[\GKdim L = \GKdim A
\]
for all nonzero left ideals $L\subseteq A$. 

Our main result is the following:
\begin{theorem}
\label{xxthm0.2}
Let $A$ be an algebra over a field $\kk$.
\begin{enumerate}
\item[(1)]
Assume Hypothesis~\ref{xxhyp0.1} and Hypothesis~\ref{xxhyp0.2} for $A$. Then $A$ has finite injective dimension. Further, as an algebra, $A$ is a finite direct sum of indecomposable noetherian algebras which are Artin--Schelter Gorenstein, Auslander Gorenstein, Cohen--Macaulay, and homogeneous of finite Gelfand--Kirillov dimension equal to their injective dimension. 
\item[(2)] If $H$ is a weak Hopf algebra, then $H$ satisfies Hypothesis~\ref{xxhyp0.2}.  In particular, if $H$ also satisfies Hypothesis~\ref{xxhyp0.1} then $H$ satisfies all of the conclusions of part (1).
\end{enumerate}
\end{theorem}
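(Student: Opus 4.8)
The plan is to reduce everything to a statement about dualizing complexes, which under the PI/module-finite Hypothesis~\ref{xxhyp0.1} are known to exist, and then exploit the monoidal structure from Hypothesis~\ref{xxhyp0.2} to show that the dualizing complex is (up to a shift and an invertible bimodule twist) concentrated in a single degree. First I would recall that under Hypothesis~\ref{xxhyp0.1} the algebra $A$ is noetherian (finite over an affine center) and PI, hence by the work of Yekutieli--Zhang and Wu--Zhang it admits an Auslander rigid dualizing complex $R$, and $A$ decomposes as a finite direct sum of indecomposable algebras, so we may assume $A$ is indecomposable. The finiteness of injective dimension, Artin--Schelter Gorenstein-ness, Auslander-Gorenstein-ness, the Cohen--Macaulay property, and homogeneity with $\GKdim A = \injdim A$ will all follow once we know $R \cong {}^1A^\sigma[d]$ for a single integer $d$ and an algebra automorphism $\sigma$ — that is, once the dualizing complex is "trivial up to a twist." So the real content is: the dualizing complex has no cohomology outside one degree.

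The key mechanism for that is the internal duality on modules. Using Hypothesis~\ref{xxhyp0.2}, for a finite-dimensional left module $M$ we have the left dual $M^*$ with the usual unit and counit, and in particular the unit object $\mathbf{1}$ of the monoidal category is itself a finite-dimensional $A$-module; biexactness of $\otensor$ and bilinearity on morphisms mean $M \mapsto M \otensor N$ and $M \mapsto N \otensor M$ are exact functors that behave well on $\Ext$-groups. The standard adjunction $\Hom_A(M \otensor N, P) \cong \Hom_A(N, M^* \otensor P)$ (and its $\Ext$-derived version, valid because $\otensor$ is biexact) lets one transport homological dimension: I would use it to show that the injective dimension of $A$, or equivalently the amplitude of $R$, is controlled by that of the unit object $\mathbf{1}$ and is invariant under tensoring with finite-dimensional modules. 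Concretely, I expect the argument to run: (i) show $\injdim {}_A\mathbf{1} < \infty$ and compute $\underline{\Ext}^\bullet_A(\mathbf{1},\mathbf{1})$-type invariants; (ii) use the duals to "move" finite-dimensional simple modules around so that the local cohomology / dualizing complex has amplitude zero; (iii) conclude Gorenstein-ness on both sides (the right-module hypothesis in Hypothesis~\ref{xxhyp0.2} gives the symmetric statement for $A^{op}$, which is needed since Artin--Schelter Gorenstein is a two-sided condition). The Cohen--Macaulay and homogeneity statements then come from combining the trivial-dualizing-complex fact with the GK-dimension theory of PI algebras (Kdim $=$ GKdim for affine PI, and the fact that $R[-d]$ being a bimodule forces $\GKdim A/L = \GKdim A$ for the relevant modules).

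The step I expect to be the main obstacle is (ii) — upgrading "the unit object is Gorenstein with a trivial dualizing complex" to "$A$ itself is" — because the unit object $\mathbf{1}$ of the weak Hopf monoidal category is genuinely not the trivial module $\kk$ (indeed $\otensor$ is only a \emph{subspace} of $\otimes_\kk$, and $\mathbf{1}$ is the so-called base/counital subalgebra module), so one cannot simply copy the Hopf-algebra proof where $\mathbf 1 = \kk$. I would handle this by working with the minimal injective resolution of ${}_AA$ and its local-cohomology spectral sequence, using the exact endofunctors $-\otensor M$ for $M$ running over a generating set of finite-dimensional modules (in particular $M = \mathbf{1}$, which acts as the identity) to show each cohomology of $R$ is both a finitely generated torsion-free-in-low-degrees module and simultaneously a finite-dimensional module, forcing all but the top one to vanish. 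Once the dualizing complex is pinned down, every remaining clause of the theorem is a now-standard consequence, and part (2) is immediate from the cited construction of the monoidal structure on $H\Modleft$ for a weak Hopf algebra $H$ together with part (1).
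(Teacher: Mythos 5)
Your overall skeleton is the right one --- use the left duals and the adjunction $\Ext^i(V,Q)\cong\Ext^i(\mathbbm{1},Q\otensor V^*)$ to get exactness of $\Ext^i_A(-,A)$ on finite-dimensional modules on both sides (this is exactly the paper's conditions (L1) and (R1), proved in its Section~\ref{sec:tc}), reduce to $A$ indecomposable, and then show the (residue/rigid dualizing) complex is concentrated in one degree. Part (2) is indeed routine once the monoidal structure on $H\Modleft$ is recalled. But your step (ii), which you correctly flag as the main obstacle, is where the proposal breaks down, and the mechanism you propose for it does not work.

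The specific problem is your claim that each lower cohomology of the dualizing complex is ``simultaneously a finite-dimensional module, forcing all but the top one to vanish.'' If $\Omega=H^{-s}(K)$ is the lowest nonvanishing cohomology of the residue complex with $s<d$, then $\Omega$ is a bimodule of GK dimension $s$, which is not finite-dimensional unless $s=0$; no tensoring with finite-dimensional modules will make it so. What the paper actually proves is quite different: using (R1) and the duality $D$ on finite-dimensional modules one shows $\Hom_A(\Omega,-)$ is exact on finite-dimensional left modules, and then a separate projectivity criterion (its Proposition~\ref{xxpro2.3}, proved by completing at maximal ideals of the center, passing to a complete semilocal ring, and using projective covers plus the Krull intersection theorem) shows $\Omega$ is \emph{projective}. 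Projectivity splits $K\cong K'\oplus\Omega[s]$, and applying $\RHom_A(-,K)$ yields $A\cong A'\oplus\tau_s(A)$ as a right module, symmetrically as a left module, whence $A=B\oplus\tau_s(A)$ as \emph{algebras} (Lemma~\ref{xxlem1.7}); this contradicts indecomposability. So vanishing of the lower cohomology is obtained by contradiction with indecomposability, not by a finiteness-forces-zero argument, and this is essential: for decomposable $A$ the dualizing complex genuinely has cohomology in several degrees. Your step (i) has a related difficulty --- proving $\injdim{}_A\mathbbm{1}<\infty$ for the unit object is essentially as hard as the theorem itself, not a preliminary reduction. Finally, a minor point: you aim for $R\cong{}^1A^{\sigma}[d]$, but the paper only obtains (and only needs) $R\cong\Omega[d]$ for an invertible bimodule $\Omega$, which need not be a twist of $A$ by an automorphism.
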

We remark that the direct sum of two Artin--Schelter Gorenstein weak Hopf algebras of different injective dimensions is a weak Hopf algebra which is not Artin--Schelter Gorenstein. Hence, this theorem gives the strongest answer to the analog of question \eqref{Q2} that is possible in this setting.

While we focus on weak Hopf algebras in this paper, we expect other generalizations of Hopf algebras to satisfy Theorem~\ref{xxthm0.2}(2).  As an example, we show this for quasi-bialgebras with antipode in Theorem~\ref{xxthm-qh} below.  It would be interesting to extend this theory to different kinds of Hopf-like structures.

We also note that if $A$ satisfies Hypothesis~\ref{xxhyp0.2} because it is a
weak Hopf algebra or other similar structure, as in Theorem~\ref{xxthm0.2}(2), the coproduct 
need not respect the direct sum decomposition in Theorem~\ref{xxthm0.2}(1).

Theorem~\ref{xxthm0.2} has many applications. Below we 
list some of the consequences of this theorem.  Undefined terminology 
will be reviewed in later sections.

\begin{theorem}
\label{xxthm0.3}
Assume Hypotheses~\ref{xxhyp0.1} and \ref{xxhyp0.2} for $A$. Then $A$ has a quasi-Frobenius artinian
quotient ring.
\end{theorem}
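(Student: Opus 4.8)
The plan is to deduce this from Theorem~\ref{xxthm0.2}(1) together with the classical theory of orders in artinian rings.

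\emph{Reduction to the indecomposable case.} By Theorem~\ref{xxthm0.2}(1) we may write $A=\bigoplus_i A_i$ with each $A_i$ indecomposable, noetherian, Artin--Schelter Gorenstein, Auslander Gorenstein, Cohen--Macaulay, homogeneous, and module-finite over its affine center (hence affine PI), with $\operatorname{injdim}A_i=\GKdim A_i=:d_i<\infty$. Since the regular elements of a finite direct product are the products of regular elements, $Q(A)$ exists iff each $Q(A_i)$ does, in which case $Q(A)=\bigoplus_i Q(A_i)$; and a finite direct product of quasi-Frobenius rings is quasi-Frobenius. So it suffices to treat one factor, and I assume from now on that $A$ is indecomposable with the listed properties.

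\emph{An artinian classical quotient ring.} Let $N$ be the nilradical and $\mathcal{C}(0)$, $\mathcal{C}(N)$ the sets of elements of $A$ that are regular, respectively regular modulo $N$. As $A/N$ is semiprime noetherian, hence Goldie, $\mathcal{C}(N)$ is a left and right Ore set, and by Small's theorem the existence of an artinian quotient ring reduces to $\mathcal{C}(0)=\mathcal{C}(N)$; the inclusion $\mathcal{C}(0)\subseteq\mathcal{C}(N)$ is automatic in any noetherian ring, so the content is that every prime associated to ${}_AA$ or to $A_A$ is a \emph{minimal} prime (then the left and right zero-divisors lie in the union of the minimal primes, which is the complement of $\mathcal{C}(N)$). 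If $P=\operatorname{ann}_A(m)$ with $0\ne m\in{}_AA$, then $A/P\cong Am$ is a nonzero left ideal, so $\GKdim A/P=\GKdim A$ by homogeneity; since $A$ is affine PI, $\GKdim A/(-)$ strictly drops along a proper inclusion of primes, so $P$ is minimal. The right-hand statement is symmetric, using that the hypotheses and conclusions of Theorem~\ref{xxthm0.2} are left--right self-dual (Hypothesis~\ref{xxhyp0.2} is). This verification of Small's criterion is the main obstacle; granting it, $Q:=Q(A)=A\,\mathcal{C}(0)^{-1}$ is artinian.

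\emph{$Q$ is quasi-Frobenius.} Localization at an Ore set does not raise injective dimension, so $Q$ is artinian with $\operatorname{injdim}({}_QQ),\operatorname{injdim}(Q_Q)\le d<\infty$; the remaining point is that this dimension is $0$ (an artinian ring can have positive finite injective dimension, e.g.\ an upper-triangular matrix algebra, so the Cohen--Macaulay and Auslander conditions must be used). Because $A$ is indecomposable and Cohen--Macaulay, it is equidimensional: $\GKdim A/P=d$ for every minimal prime $P$ (its affine center is connected). Let $S$ be a simple left $Q$-module and write $S=M\otimes_A Q$ with $M$ a cyclic $A$-module; as $S\ne0$, $M$ has a nonzero subquotient that is torsion-free over some minimal prime $P$, so $\GKdim_A M=d$ by equidimensionality. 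The Auslander and Cohen--Macaulay conditions then force $\GKdim_A\Ext^i_A(M,A)\le d-i<d$ for $i\ge1$, and any finitely generated one-sided $A$-module of Gelfand--Kirillov dimension $<d$ is $\mathcal{C}(0)$-torsion (again by equidimensionality), so $\Ext^i_Q(S,Q)\cong\Ext^i_A(M,A)\otimes_A Q=0$ for all $i\ge1$. Since $Q$ is artinian, $\operatorname{injdim}({}_QQ)=\sup\{\,i:\Ext^i_Q(S,Q)\ne0,\ S\text{ simple}\,\}=0$, and likewise on the right, so $Q$ is self-injective and noetherian, i.e.\ quasi-Frobenius. (Alternatively, once $Q$ is known to be artinian one can localize the rigid dualizing complex of $A$: for an Auslander--Gorenstein, Cohen--Macaulay ring with $\operatorname{injdim}=\GKdim$ it is concentrated in a single degree, and this passes to $Q$, giving directly that $Q$ is $0$-Gorenstein.)
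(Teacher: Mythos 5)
Your proof is correct in outline and begins exactly as the paper does: Theorem~\ref{xxthm0.2}(1) reduces everything to an indecomposable summand that is noetherian, affine PI, homogeneous, Auslander Gorenstein, and Cohen--Macaulay with $\injdim = \GKdim$. The difference is in what happens next: the paper (via Lemma~\ref{xxlem4.2}) simply cites \cite[Corollary 6.2]{ASZ1}, which says precisely that such an algebra has a quasi-Frobenius artinian quotient ring, whereas you open that black box and reprove it --- verifying Small's criterion from homogeneity plus the strict drop of GK dimension along proper prime inclusions in affine PI rings, and then killing $\Ext^i_Q(S,Q)$ for $i\ge 1$ using the Auslander and Cohen--Macaulay conditions to show the higher Ext groups are torsion. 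This is essentially the standard argument behind the cited result (cf.\ Stafford--Zhang), so nothing is conceptually missing; what your version buys is self-containedness, at the cost of several steps stated loosely for the noncommutative setting. In particular: the complement of $\mathcal{C}(N)$ is not literally the union of the minimal primes (rather $\mathcal{C}(N)=\bigcap_i\mathcal{C}(P_i)$ over minimal primes, and relating zero-divisors to associated primes requires the FBN framework, which does hold here since $A$ is module-finite over an affine center); an associated prime is the annihilator of a suitable nonzero submodule rather than of a single element; equidimensionality of the minimal primes follows from homogeneity (each minimal prime annihilates a nonzero ideal, which is a left ideal of GK dimension $d$), not from connectedness of the center; and the claim that every finitely generated module of GK dimension $<d$ is $\mathcal{C}(0)$-torsion needs a prime-avoidance argument. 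All of these are standard and fixable, so I would count your proof as a valid, more self-contained alternative to the paper's citation.
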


\begin{theorem}
\label{xxthm0.4}
Assume Hypotheses~\ref{xxhyp0.1} and \ref{xxhyp0.2} for $A$. If $A$ has finite global dimension, then 
$A$ is a direct sum of prime algebras and each summand is homogeneous,
Artin--Schelter regular, Auslander regular, and Cohen--Macaulay.
\end{theorem}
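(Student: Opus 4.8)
The plan is to bootstrap from Theorem~\ref{xxthm0.2}(1), which already applies under the present hypotheses. That result produces a ring decomposition $A = \bigoplus_{i=1}^{n} A_i$ in which each summand $A_i = e_i A$ (for a central idempotent $e_i$) is an indecomposable noetherian algebra that is Artin--Schelter Gorenstein, Auslander Gorenstein, Cohen--Macaulay, and homogeneous, with $\GKdim A_i = \injdim A_i =: d_i < \infty$. Since each $A_i$ is a ring-theoretic direct factor of $A$, every $A_i$-module is an $A$-module (annihilated by $1 - e_i$) whose minimal projective resolution over $A_i$ is also its minimal projective resolution over $A$; hence $\gldim A_i \le \gldim A < \infty$, and $Z(A_i) = e_i Z(A)$ is affine with $A_i$ module-finite over it, so $A_i$ again satisfies Hypothesis~\ref{xxhyp0.1}. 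As ``Artin--Schelter regular'' is, by definition, ``Artin--Schelter Gorenstein of finite global and Gelfand--Kirillov dimension'' and ``Auslander regular'' is ``Auslander Gorenstein of finite global dimension'', each $A_i$ is automatically Artin--Schelter regular and Auslander regular; it remains Cohen--Macaulay and homogeneous, and $A$ is of course the direct sum of the $A_i$. Thus the only assertion requiring a genuine argument is that each indecomposable summand $A_i$ is \emph{prime}.

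For that, I would use the following facts about an algebra $A_i$ that is module-finite over an affine center, has finite global dimension, and is Cohen--Macaulay. The Cohen--Macaulay equality $j(M) + \GKdim M = d_i$ (valid for all finitely generated $M \neq 0$, where $j$ denotes grade over $A_i$) applied to a simple module $S$ gives $j(S) = d_i$; combined with $j(S) \le \projdim_{A_i} S$ (which holds since $\RHom_{A_i}(S, A_i) \neq 0$ for $S \neq 0$, as $A_i$ has finite global dimension) and $\projdim_{A_i} S \le \gldim A_i = d_i$, this yields $\projdim_{A_i} S = d_i$ for \emph{every} simple $A_i$-module $S$. Hence $A_i$ is homologically homogeneous in the sense of Brown--Hajarnavis, and by the structure theory of homologically homogeneous rings such an algebra is a finite direct product of prime algebras; being indecomposable, $A_i$ is therefore prime. (An alternative route, which I would also be prepared to carry out: finite global dimension forces the center $Z(A_i)$, which is connected because $A_i$ is indecomposable, to be a regular domain, and the Cohen--Macaulay and Auslander conditions then force the finite-dimensional $\operatorname{Frac}(Z(A_i))$-algebra $A_i \otimes_{Z(A_i)} \operatorname{Frac}(Z(A_i))$ to be semisimple; since $A_i$ is $Z(A_i)$-torsion-free and connected it embeds as an order in a single matrix algebra over a division ring, hence is prime.) Reassembling the summands gives the theorem.

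The main obstacle is exactly this last point. The delicate feature is that ``indecomposable plus finite global dimension'' is by itself nowhere near enough to force primeness: upper-triangular matrix algebras such as $\begin{pmatrix} \kk & \kk \\ 0 & \kk \end{pmatrix}$ (or their versions over $\kk[x]$) are indecomposable, of finite global dimension, and homogeneous, yet far from prime. What excludes these is precisely the Cohen--Macaulay condition --- it fails for them, since a simple module of Gelfand--Kirillov dimension $0$ there can have grade strictly less than the Gelfand--Kirillov dimension of the ring --- so the Cohen--Macaulay and Auslander hypotheses inherited from Theorem~\ref{xxthm0.2}(1) must be used in an essential way, either through the homologically-homogeneous machinery or by passing to the (regular, connected, hence domain) center and controlling the central localization.
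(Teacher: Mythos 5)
Your proposal is correct and follows essentially the same route as the paper: both reduce everything to the decomposition and the Gorenstein/Auslander/Cohen--Macaulay/homogeneous properties already established in Theorem~\ref{xxthm0.2}(1) (equivalently Hypothesis~\ref{xxhyp4.1}), note that finite global dimension passes to the direct factors so that ``regular'' follows from ``Gorenstein'' by definition, and isolate primeness of each indecomposable summand as the only remaining point. The sole difference is that where you prove primeness by showing every simple module has projective dimension $d_i$ and invoking the Brown--Hajarnavis theory of homologically homogeneous rings, the paper simply cites the corresponding result of Stafford and Zhang (\cite[Theorem 5.4]{SZ1}, in Lemma~\ref{xxlem4.3}) --- so you have in effect supplied a proof of the citation rather than taken a different path.
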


The following is a version of the Nichols--Zoeller Theorem for infinite-dimensional algebras.
\begin{theorem}
\label{xxthm0.5}
Assume Hypotheses~\ref{xxhyp0.1} and \ref{xxhyp0.2} for both $A_1$ and $A_2$. Let $A_1$ 
and $A_2$ be homogeneous of the same Gelfand--Kirillov dimension. 
Suppose that $A_1$ has finite global dimension. If there is an algebra map 
(which is not necessarily a coalgebra map) $f: A_1\to A_2$ 
such that $A_2$ is a finitely generated module over $A_1$ on both 
sides, then $A_2$ is a projective module over $A_1$ on both sides.
\end{theorem}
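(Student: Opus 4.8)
The plan is to reduce the projectivity assertion to a rank computation, using the structure theory supplied by Theorem~\ref{xxthm0.2} and the homological regularity of $A_1$. First I would use Theorem~\ref{xxthm0.4} applied to $A_1$: since $A_1$ has finite global dimension and satisfies the hypotheses, $A_1 = \bigoplus_i B_i$ is a finite direct sum of prime algebras, each homogeneous, Artin--Schelter regular, Auslander regular, and Cohen--Macaulay, all of the same GK dimension $d=\GKdim A_1$. Projectivity of $A_2$ over $A_1$ on, say, the left can be checked summand by summand, so it suffices to treat the case where $A_1 = B$ is prime, Auslander regular, Cohen--Macaulay, and homogeneous of GK dimension $d$. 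The key point is that over such a ring, a finitely generated module is projective if and only if it is a maximal Cohen--Macaulay module (equivalently, $\GKdim$-pure of dimension $d$ with $\projdim < \infty$, which by Auslander regularity plus Auslander--Buchsbaum/Cohen--Macaulayness forces $\projdim = 0$). So the whole theorem comes down to showing that $A_2$, viewed as a left $B$-module via $f$, is homogeneous in the appropriate sense, i.e.\ has no submodule of GK dimension $< d$.

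The second step is to produce that purity from the monoidal structure on $A_2\Modleft$. Here is where Hypothesis~\ref{xxhyp0.2} for $A_2$ enters. Since $A_2$ is homogeneous of GK dimension $d = \GKdim A_1$ and is module-finite over $A_1$ on both sides, $A_2$ has GK dimension $d$ as well, and every nonzero left ideal of $A_2$ has GK dimension $d$. Now I would argue: the left $B$-module structure on $A_2$ coming from $f$ is a restriction of the left $A_2$-module structure, and GK dimension of a module is computed over the base field $\kk$, so it does not matter whether we regard a $\kk$-subspace of $A_2$ as a $B$-submodule or an $A_2$-submodule — its GK dimension is intrinsic. Therefore any nonzero $B$-submodule $N \subseteq A_2$ generates a nonzero left ideal $A_2 N$ of $A_2$, which has GK dimension $d$; since $A_2$ is module-finite over $B$, $\GKdim_B(A_2 N) = \GKdim_\kk(A_2 N) = d$, and $A_2 N$ is in turn finitely generated over $B$ and contains $N$, so... actually the cleaner route: $N$ itself, as a $\kk$-subspace of $A_2$, has $\GKdim_\kk N \le \GKdim_\kk A_2 = d$; and for the lower bound, $N$ contains a nonzero element $x$, and the cyclic $A_2$-module $A_2 x$ has GK dimension $d$ by homogeneity of $A_2$, but $A_2 x$ need not lie in $N$. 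So instead I would use the GK-dimension-pure (Cohen--Macaulay) structure: over $B$ (Auslander--Gorenstein, Cohen--Macaulay), the canonical dimension function $j_B(-) = \cd_B(-)$ is exact and finitely partitive, so $A_2$ decomposes up to finite filtration and it suffices to bound $j_B$ on subquotients; the bimodule $A_2$ is $j$-pure because it is a faithful module over the prime ring $A_2$ which is itself homogeneous, and purity of a ring as a bimodule over a subring to which it is finite passes to purity as a one-sided $B$-module. Concluding, $A_2$ is a $j_B$-pure (hence maximal Cohen--Macaulay) left $B$-module, and by Auslander regularity of $B$ together with the Auslander--Buchsbaum formula $\projdim_B A_2 + \depth_B A_2 = \gldim B < \infty$ combined with $\depth_B A_2 = \GKdim_B A_2 = d = \gldim B$ (Cohen--Macaulayness), we get $\projdim_B A_2 = 0$. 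Running the symmetric argument on the right (using Hypothesis~\ref{xxhyp0.2} for $A_2^{op}$, which is part of the hypothesis) gives projectivity on both sides.

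The main obstacle I anticipate is the purity/Cohen--Macaulay step for $A_2$ as a one-sided $B$-module: one must be careful that homogeneity of $A_2$ (a statement about left ideals of $A_2$) genuinely forces $j_B$-purity of $_BA_2$, and that the relevant dimension functions over $B$ and over $\kk$ agree up to the module-finite extension — this uses that $B$ is itself GK-homogeneous/Cohen--Macaulay so that $\cd_B$ equals $d - \GKdim_\kk(-)$ on finitely generated $B$-modules, a fact that should follow from the Cohen--Macaulay and Auslander regular properties of $B$ guaranteed by Theorem~\ref{xxthm0.4}, via \cite{Lev} or \cite{SZ}-type results relating $\cd$ and $\GKdim$ for PI-algebras that are finite over an affine center. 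The remaining ingredients (passage between summands, Auslander--Buchsbaum over $B$, the equivalence "maximal Cohen--Macaulay $+$ finite projective dimension $\Rightarrow$ projective") are standard and I would cite them rather than reprove them.
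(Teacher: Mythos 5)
Your overall framing (reduce to the regular algebra $A_1$, invoke a Cohen--Macaulay/Auslander--Buchsbaum argument) points in the right general direction, but the pivotal step is wrong. You claim that over a prime, homogeneous, Auslander regular, Cohen--Macaulay summand $B$ of $A_1$, a finitely generated module is projective as soon as it is $\GKdim$-pure of dimension $d=\GKdim B$ and has finite projective dimension, and you justify the needed equality $\depth_B A_2 = \GKdim_B A_2$ by ``Cohen--Macaulayness.'' This conflates two different invariants. The Cohen--Macaulay condition in this paper (Definition~\ref{xxdef1.2}) is $j(M)+\GKdim M=\GKdim B$, where $j(M)=\min\{i : \Ext^i_B(M,B)\neq 0\}$ is the \emph{grade}; it says nothing about $\depth M=\min\{i: \Ext^i_B(S,M)\neq 0\}$, which is what the Auslander--Buchsbaum formula requires. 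A concrete counterexample to your claimed equivalence: for $B=\kk[x,y]$ and $M=(x,y)$, the module $M$ is $2$-pure, satisfies $j(M)+\GKdim M=0+2=\GKdim B$, and has $\projdim M=1<\infty$, yet $\depth M=1$ and $M$ is not projective. So purity of ${}_{A_1}A_2$ (which, as you suspected, can indeed be extracted from homogeneity of $A_2$ together with right module-finiteness over $A_1$) does not deliver the conclusion: the whole content of the theorem is the depth computation $\depth_{A_1}A_2=d$, and that is exactly what your argument never establishes.

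The paper's proof is short precisely because that depth computation is outsourced: by Theorem~\ref{xxthm0.2} and homogeneity, $A_1$ and $A_2$ are each AS Gorenstein (not merely homogeneous) of the \emph{same} injective dimension $d$, and then Lemma~\ref{xxlem4.4} (= \cite[Proposition 4.3]{WZ1}) applies directly: for a module-finite map of noetherian PI AS Gorenstein algebras of equal injective dimension, finite projective dimension of $A_2$ over $A_1$ forces projectivity, and $\projdim_{A_1}A_2\leq \gldim A_1<\infty$ is automatic. The essential input there is the AS Gorenstein property of the \emph{target} $A_2$ --- the vanishing $\Ext^i_{A_2}(T,A_2)=0$ for $i\neq d$ and $T$ finite-dimensional, which via a change-of-rings argument yields $\depth_{A_1}A_2=\depth_{A_2}A_2=d$ --- and your proposal uses $A_2$ only through its homogeneity. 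To repair your argument you would need to reprove that depth transfer, at which point you would essentially be reproving the cited result; the decomposition of $A_1$ into prime summands via Theorem~\ref{xxthm0.4} is harmless but unnecessary, since homogeneity already makes $A_1$ AS Gorenstein of pure injective dimension $d$.
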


\begin{theorem}
\label{xxthm0.6}
Assume Hypotheses~\ref{xxhyp0.1} and \ref{xxhyp0.2} for $A$.
\begin{enumerate}
\item[(1)]
$A$ has a rigid Auslander and Cohen--Macaulay dualizing complex
which is also an invertible complex of $A$-bimodules.
\item[(2)]
$A$ has a residue complex.
\item[(3)]
If $A$ is homogeneous, then $A$ has a minimal pure injective 
resolution on both sides.
\end{enumerate}
\end{theorem}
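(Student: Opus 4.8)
The plan is to read off all three statements from the structural conclusions of Theorem~\ref{xxthm0.2}(1) together with the now-standard homological machinery of (rigid) dualizing complexes for noetherian algebras that are module-finite over an affine center. First I would record that, by Theorem~\ref{xxthm0.2}(1), we may write $A=\bigoplus_{i=1}^{n}A_i$ with each $A_i$ an indecomposable noetherian algebra that is Artin--Schelter Gorenstein, Auslander Gorenstein, Cohen--Macaulay, and homogeneous with $\GKdim A_i=\injdim A_i=:d_i$, and each $A_i$ again satisfies Hypothesis~\ref{xxhyp0.1}. A dualizing complex, a residue complex, and a minimal injective resolution of $A$ are each obtained as the direct sum of the corresponding objects over the summands $A_i$ (the cross terms vanishing since $A_iA_j=0$ for $i\neq j$), so it is enough to treat the case that $A$ is indecomposable with $\GKdim A=\injdim A=d$; in part (3) the extra hypothesis that $A$ is homogeneous then amounts to all the $d_i$ being equal to $\GKdim A$.

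For (1): since $A$ is module-finite over its affine center $Z(A)$, a dualizing complex of the commutative affine algebra $Z(A)$ (Grothendieck--Hartshorne) gives rise to one for $A$, and then by Van den Bergh's theory $A$ possesses a rigid dualizing complex $R$, unique up to isomorphism. I would then use that $A$ is Artin--Schelter Gorenstein (together with Auslander and Cohen--Macaulay) to conclude that $R$ is concentrated in a single cohomological degree and is a shift of an invertible $A$-bimodule $\Omega$, a noncommutative ``canonical bimodule'', so that $R\cong\Omega[d]$ is an invertible complex of $A$-bimodules. That $R$ is an \emph{Auslander} dualizing complex is exactly the statement that $A$ is Auslander Gorenstein, and that $R$ is \emph{Cohen--Macaulay} follows from $A$ being Cohen--Macaulay once one checks that the canonical dimension function $\operatorname{Cdim}_R$ attached to $R$ agrees with $\GKdim$ on finitely generated modules --- which it does because $A$ is module-finite over an affine center, so that $\GKdim$ is exact and finitely partitive and is the dimension function coming from $R$.

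For (2) and (3): with $R$ now an Auslander and Cohen--Macaulay dualizing complex, the existence of a residue complex $K_A$ is a theorem of Yekutieli and Zhang on residue complexes; concretely $K_A$ is obtained as a minimal injective resolution of $R$ in the category of $A$-bimodule complexes, and its $(-i)$-th term is a direct sum of injective hulls $E_A(A/\mathfrak{p})$ over primes $\mathfrak{p}$ with $\operatorname{Cdim}A/\mathfrak{p}=i$. This gives (2). For (3), I would observe that since $R\cong\Omega[d]$ with $\Omega$ an invertible bimodule, a minimal injective resolution of $R$ is, after undoing the twist by $\Omega$ and the shift by $d$, a minimal injective resolution of ${}_AA$; hence the minimal injective resolution $0\to A\to I^0\to I^1\to\cdots$ of ${}_AA$ has $I^j$ equal, up to the twist, to $K_A^{\,j-d}$, which is a direct sum of injective hulls of $A/\mathfrak{p}$ over primes $\mathfrak{p}$ with $\GKdim A/\mathfrak{p}=d-j$ --- here homogeneity is used so that $d=\GKdim A$ and $d-j$ is the honest GK-codimension. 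Thus this resolution is pure, and it is minimal by construction; running the same argument over $A^{op}$ handles the right-hand side.

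The conceptual content is already in Theorem~\ref{xxthm0.2}, so the real work in writing this up is bookkeeping with the dualizing-complex literature: (i) quoting the correct existence statement that turns Hypothesis~\ref{xxhyp0.1}, plus the finite injective dimension established in Theorem~\ref{xxthm0.2}, into a \emph{rigid} dualizing complex for $A$; and (ii) verifying that $\GKdim$ coincides with the canonical dimension $\operatorname{Cdim}_R$, so that the ``Cohen--Macaulay'' and ``homogeneous'' hypotheses --- which in Theorem~\ref{xxthm0.2} are phrased via $\GKdim$ --- are precisely the inputs that the residue-complex and pure-injective-resolution results require. I expect (ii), the matching of dimension functions, to be the fiddliest point, although it is standard for algebras that are module-finite over an affine center.
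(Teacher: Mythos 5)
Your proposal is correct and follows essentially the same route as the paper: existence of the rigid Auslander Cohen--Macaulay dualizing complex and of the residue complex come from the Yekutieli--Zhang results for algebras module-finite over an affine center (the paper's Lemma~\ref{xxlem3.5}, where the matching of canonical dimension with $\GKdim$ is handled exactly as you anticipate), invertibility is obtained by reducing to indecomposable summands and using that the AS Gorenstein property makes $A$ itself a dualizing complex so that $R\cong\Omega[d]$ is a tilting complex (the paper's Corollary~\ref{xxcor3.8}), and part (3) is proved by the same untwisting of the residue complex by $\Omega^{-1}$. No gaps.
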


The following result concerns the homological properties of 
$A$-modules.

\begin{theorem}
\label{xxthm0.7}
Assume Hypotheses~\ref{xxhyp0.1} and \ref{xxhyp0.2} for $A$ and let $d=\GKdim A$.
Let $M$ be a nonzero finitely generated left 
$A$-module.
\begin{enumerate}
\item[(1)]{\rm(The Auslander--Buchsbaum formula)}
If $\projdim M<\infty$, then
\[
\projdim M\leq d-\depth M.
\]
If, further, $A$ is homogeneous, then
\[
\projdim M= d-\depth M.
\]
\item[(2)]{\rm (Bass's theorem)}
If $\injdim M<\infty$, then
\[
\injdim M\leq d.
\]
If, further, $A$ is homogeneous, then
\[
\injdim M=d.
\]
\item[(3)]{\rm(The no-holes theorem)} 
Suppose either $A$ is homogeneous or $M$ is 
indecomposable.
For every integer $i$ between $\depth M$ and 
$\injdim M$,
there is a simple left 
$A$-module $S$ such that $\Ext^i_A(S,M)\neq 0$.
\end{enumerate}
\end{theorem}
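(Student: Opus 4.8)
The plan is to reduce all three statements to a single indecomposable direct summand of $A$, using Theorem~\ref{xxthm0.2}(1), and then to invoke the homological theory of Auslander--Gorenstein Cohen--Macaulay noetherian algebras.

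\emph{Step 1: the block reduction.} By Theorem~\ref{xxthm0.2}(1) one writes $A=\prod_{i=1}^{n}A_{i}$ with each $A_{i}$ indecomposable, noetherian, Auslander--Gorenstein, Cohen--Macaulay, homogeneous, and with $\GKdim A_{i}=\injdim A_{i}=:d_{i}$, so that $d=\max_{i}d_{i}$. The central idempotents split every finitely generated left $A$-module as $M=\bigoplus_{i}M_{i}$ with $M_{i}$ finitely generated over $A_{i}$, and then $\projdim_{A}M=\max_{i}\projdim_{A_{i}}M_{i}$, $\injdim_{A}M=\max_{i}\injdim_{A_{i}}M_{i}$, $\depth_{A}M=\min_{i}\depth_{A_{i}}M_{i}$, $\GKdim M=\max_{i}\GKdim M_{i}$ (extrema over the $i$ with $M_{i}\ne0$), every simple $A$-module is a simple $A_{i}$-module for a unique $i$, and $\Ext^{k}_{A}(S,M)=\Ext^{k}_{A_{i}}(S,M_{i})$ for such an $S$. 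Since each $A_{j}$ is a nonzero left ideal of $A$, homogeneity of $A$ forces $d_{j}=\GKdim A_{j}\ge\GKdim A=d$, hence $d_{j}=d$ for all $j$. Granting (1)--(3) for every block, these identities reassemble the theorem: in (1), $\projdim_{A}M=\max_{i}(d_{i}-\depth_{A_{i}}M_{i})\le d-\min_{i}\depth_{A_{i}}M_{i}=d-\depth_{A}M$, with equality when all $d_{i}=d$; in (2), $\injdim_{A}M=\max_{i}d_{i}\le d$, with equality when all $d_{i}=d$; and in (3), either $M$ is indecomposable and so supported on a single block, or $A$ is homogeneous and the block Ext-intervals $[\depth_{A_{i}}M_{i},\injdim_{A_{i}}M_{i}]$ all share their upper endpoint (namely $d$, or $+\infty$), so their union has no gap. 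After this I may assume $A$ is indecomposable, hence noetherian, Auslander--Gorenstein, Cohen--Macaulay, homogeneous, with $\GKdim A=\injdim A=d$.

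\emph{Step 2: the indecomposable case.} Writing $j(N)=\inf\{k:\Ext^{k}_{A}(N,A)\ne0\}$ for finitely generated one-sided $N$, I would use the Cohen--Macaulay identity $j(N)+\GKdim N=d$ for $N\ne0$; the Auslander condition $j(N')\ge k$ for submodules $N'\subseteq\Ext^{k}_{A}(N,A)$; the Iwanaga--Gorenstein property $\injdim{}_{A}A=\injdim A_{A}=d<\infty$, which makes finite injective dimension equivalent to finite projective dimension (and bounded by $d$) for finitely generated modules; the identification $\depth M=\inf\{k:\Ext^{k}_{A}(S,M)\ne0\text{ for some simple }S\}$ (valid since $A$ is module-finite over its affine center) together with the bound $\depth M\le\GKdim M$; and the residue complex of Theorem~\ref{xxthm0.6}(2). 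Then, for (1), if $p=\projdim M<\infty$ the biduality spectral sequence $\Ext^{p}_{A^{op}}(\Ext^{q}_{A}(M,A),A)\Rightarrow M$ with the Auslander condition forces $p=\max\{k:\Ext^{k}_{A}(M,A)\ne0\}$, and the Cohen--Macaulay identity (via local duality against the residue complex) gives $\depth M=d-p$, yielding the equality and, a fortiori, the inequality. For (2), $\injdim M<\infty$ implies $\projdim M<\infty$, hence $\injdim M\le d$; and since $\injdim M=\max\{k:\Ext^{k}_{A}(S,M)\ne0,\ S\text{ simple}\}$ for finitely generated $M$, the claim $\injdim M=d$ reduces to showing this maximum is $d$ for $M\ne0$, which holds because the top $\Ext$ against a simple is local-dual to the bottom nonzero $\operatorname{Tor}$, which survives by Nakayama and homogeneity. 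For (3), the no-holes statement is exactly that $\{k:\Ext^{k}_{A}(S,M)\ne0\text{ for some simple }S\}$ is a contiguous integer interval, which I would obtain from the usual dimension-shift/biduality argument, using the Auslander condition to control the intermediate terms of the minimal injective resolution of $M$ and the Cohen--Macaulay identity to pin its endpoints at $\depth M$ and $\injdim M$.

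\emph{Main obstacle.} The weight of the proof is Step 2 --- the homological machinery over indecomposable Auslander--Gorenstein Cohen--Macaulay noetherian algebras, and in particular the no-holes theorem, where one must understand the entire minimal injective resolution and not merely its length. The care specific to this theorem is to fix the correct notion of $\depth$ in the module-finite-over-affine-center setting --- so that it agrees both with regular sequences and with the $\Ext$-from-simples description used above --- and to verify that the block reduction of Step~1 cannot manufacture a gap in (3); it is precisely for the latter that one needs $A$ homogeneous (forcing all $d_{i}=d$) or $M$ indecomposable (forcing a single block).
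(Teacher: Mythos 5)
Your Step 1 is essentially identical to the paper's argument: the paper proves this theorem by showing (via Theorem~\ref{xxthm0.2}) that $A$ satisfies Hypothesis~\ref{xxhyp4.1} and then invoking Proposition~\ref{xxpro4.6}, whose proof is exactly your block reduction --- decompose $A=\bigoplus_{i}A_i$ into indecomposable homogeneous summands, decompose $M=\bigoplus_i M_i$, and assemble $\projdim$, $\injdim$, $\depth$ from the blocks, with homogeneity (forcing all $d_i=d$) or indecomposability of $M$ (forcing a single block) entering in exactly the places you identify. The one structural difference is that the paper does not decompose at all in the homogeneous case: homogeneity plus Hypothesis~\ref{xxhyp4.1} makes $A$ itself AS Gorenstein, and all three statements are then quoted directly from \cite[Theorem 0.1]{WZ2}.

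That citation is also where your proposal has its gap. Your Step 2 is not a proof but a list of the statements to be proved, presented as consequences of named tools. Concretely: (i) the assertion that ``the Cohen--Macaulay identity (via local duality against the residue complex) gives $\depth M=d-p$'' \emph{is} the Auslander--Buchsbaum formula; the CM identity $j(N)+\GKdim N=d$ relates the \emph{smallest} nonvanishing $\Ext^k_A(N,A)$ to $\GKdim N$, whereas you need to relate the \emph{largest} one (which equals $\projdim M$) to $\depth M$, and no argument bridging the two is given. (ii) For Bass's theorem, ``the top $\Ext$ against a simple is local-dual to the bottom nonzero $\operatorname{Tor}$, which survives by Nakayama and homogeneity'' is an assertion, not an argument; establishing $\injdim M=d$ for every nonzero finitely generated $M$ of finite injective dimension over a homogeneous AS Gorenstein PI algebra is the substantive content of \cite[Theorem 0.1(2)]{WZ2}. (iii) The no-holes theorem, which you yourself flag as the hardest point, is likewise only gestured at. None of these steps is wrong in direction, but as written they are placeholders for the theorems of \cite{WZ2}, which is precisely the reference the paper uses to discharge them. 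If you replace your Step 2 by that citation, your argument coincides with the paper's.
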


We begin, in Section~\ref{xxsec1}, by reviewing the definitions of various homological properties and introducing, for a noetherian algebra $A$, important homological conditions (L1) and (R1) on the categories of left and right $A$-modules, respectively. 
In Section~\ref{xxsec2}, we prove some key lemmas which allow us, in Section~\ref{xxsec3}, to prove that if $A$ is a finite module over its affine center and satisfies (L1) and (R1), then $A$ is a finite direct sum of AS Gorenstein, Auslander Gorenstein, and Cohen--Macaulay algebras.
We study the consequences of this result in Section~\ref{xxsec4} and show that the conclusions of Theorems~\ref{xxthm0.3}--\ref{xxthm0.7} hold for $A$.
In Section~\ref{xxsec5}, we recall the definition of a weak Hopf algebra and provide some examples.
In Section~\ref{sec:tc}, we study the relationship between Hypothesis~\ref{xxhyp0.2} and the conditions (L1) and (R1) and conclude in Section~\ref{xxsec7} that weak Hopf algebras which are module-finite over their affine centers satisfy (L1) and (R1). We then prove Theorems~\ref{xxthm0.2}--\ref{xxthm0.7}, settling the Brown--Goodearl Question for this class of weak Hopf algebras.
We conclude in Section~\ref{xxsec8} by posing some open questions.

\subsection*{Acknowledgments}
R. Won was partially supported by an AMS--Simons Travel Grant.
J.J. Zhang was partially supported by the US National Science 
Foundation (No. DMS-1700825).

\section{preliminaries}
\label{xxsec1}

We first recall some definitions concerning different homological 
properties. For an algebra $A$, let $A\Modleft$ denote the category of left $A$-modules; 
for $M, N \in A\Modleft$ we write $\Hom_A(M,N)$ for the space of left module 
homomorphisms. We identify the category $\Modright A$ of right $A$-modules 
with $A^{op}\Modleft$ when convenient. In particular, for right modules $M$ 
and $N$ we write $\Hom_{A^{op}}(M, N)$ for the space 
of right module homomorphisms.

\begin{definition}\cite[Definitions 1.2 and 2.1]{Le}
\label{xxdef1.1}
Let $A$ be an algebra and $M$ a left $A$-module.
\begin{enumerate}
\item[(1)]
The {\it grade number} of $M$ is defined to be
\[
j_A(M):=\inf\{i \mid \Ext_A^i(M,A)\neq0\}\in {\mathbb N}\cup\{+\infty\}.
\]
We often write $j(M)$ for $j_A(M)$. Note that 
$j_A(0)=+\infty$.  
\item[(2)] 
We say that $M$ satisfies the {\it Auslander condition} if for any $q\geq0,$
$j_A(N)\geq q$ for all right $A$-submodules $N$ of $\Ext_A^q(M,A)$.
\item[(3)] 
We say a noetherian algebra $A$ is {\it Auslander Gorenstein} (respectively, 
{\it Auslander regular}) of dimension $n$ if 
$\injdim A_A=\injdim {_AA}=n<\infty$ (respectively, $\gldim A=n<\infty$),
and every finitely generated left and right $A$-module satisfies 
the Auslander condition.
\end{enumerate}
\end{definition}

\begin{definition}\cite[Definition 0.4]{ASZ1}
\label{xxdef1.2}
We say $A$ is {\it Cohen--Macaulay} (or, {\it CM} 
for short) if $\GKdim(A)=d<\infty $ and 
\[
j(M)+\GKdim(M)=\GKdim(A)
\]
for every finitely generated nonzero left (or right) $A$-module $M$. 
\end{definition}

In this paper we will use the following slightly 
modified version of the Artin--Schelter Gorenstein
property defined in \cite[Definition 3.1]{WZ1}. 

\begin{definition}
\label{xxdef1.3} 
A noetherian algebra $A$ is called {\it Artin--Schelter Gorenstein} 
(or \emph{AS Gorenstein}, for short) if the following conditions hold:
\begin{enumerate}
\item[(1)]
$A$ has finite injective dimension $d<\infty$ on both sides.
\item[(2)]
For every finite-dimensional left $A$-module $S$,
$\Ext^i_A(S, A)=0$ for all $i \neq d$ and $\dim\Ext^d_{A} (S, A) 
<\infty$.
\item[(3)]
The analog of part (2) for right $A$-modules holds.
\end{enumerate}
If, moreover,
\begin{enumerate}
\item[(4)]
$A$ has finite global dimension,
\end{enumerate}
then $A$ is called 
{\it Artin--Schelter regular} (or \emph{AS regular}, for short).
\end{definition}

Let $A \Modleftfd$ denote the category of finite-dimensional left $A$-modules. The category of finite-dimensional right $A$-modules will be written $\Modrightfd A$ 
or $A^{op} \Modleftfd$.
\begin{lemma}
\label{xxlem1.4}
Suppose $A$ is AS Gorenstein of injective dimension $d$. Then 
$A \Modleftfd$ is contravariant equivalent to $A^{op} \Modleftfd$ via 
the functor $\Ext^d_{A}(-,A)$. As a consequence, for each $i$, both 
$\Ext^i_{A}(-,A)$ and $\Ext^i_{A^{op}}(-,A)$ are exact functors
when restricted to finite-dimensional $A$-modules. 
\end{lemma}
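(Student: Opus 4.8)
The plan is to extract the equivalence directly from the Gorenstein condition by showing that $\Ext^d_A(-,A)$ and $\Ext^d_{A^{op}}(-,A)$ are mutually inverse contravariant functors on the finite-dimensional module categories. First I would fix notation: write $\HB^i(-) = \Ext^i_A(-,A)$ for left modules and $\HB^i(-)=\Ext^i_{A^{op}}(-,A)$ for right modules, these being computed from $\RHom_A(-,A)$, which takes a finite-dimensional left module to a complex of right modules (and symmetrically). The key structural input is that for a finite-dimensional left module $S$, parts (2)--(3) of Definition~\ref{xxdef1.3} say $\HB^i(S)=0$ for $i\neq d$ and $\HB^d(S)$ is finite-dimensional; so $\RHom_A(S,A)$ is, up to the shift $[-d]$, just the module $\HB^d(S)$ concentrated in a single degree. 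The same holds on the other side.

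The main step is then to compute the double-dual. For a finite-dimensional left module $S$, consider $\RHom_{A^{op}}(\RHom_A(S,A),A)$. Since $\RHom_A(S,A)\simeq \HB^d(S)[-d]$ with $\HB^d(S)$ finite-dimensional over $A^{op}$, applying $\RHom_{A^{op}}(-,A)$ and using the right-module analog of (2)--(3) gives $\RHom_{A^{op}}(\HB^d(S)[-d],A)\simeq \HB^d_{A^{op}}(\HB^d(S))[d][-d] = \HB^d_{A^{op}}(\HB^d(S))$, again concentrated in degree $0$. So the composite $\HB^d_{A^{op}}\circ\HB^d$ sends finite-dimensional left modules to finite-dimensional left modules, and there is a biduality natural transformation $S \to \HB^d_{A^{op}}(\HB^d(S))$ coming from the standard evaluation map $S\to\RHom_{A^{op}}(\RHom_A(S,A),A)$ (which lands in degree $0$ after the identifications above). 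To see this map is an isomorphism it suffices, since both sides are finite-dimensional (hence of finite length) and the functors involved are additive, to check it on simple modules $S$; and for a simple $S$ one can reduce to a counting/length argument, or invoke that the evaluation map is an isomorphism in the derived category because $A$ has finite injective dimension on both sides, so $\RHom_{A^{op}}(\RHom_A(S,A),A)\simeq S$ for $S$ in the bounded derived category of finite-dimensional modules — this is the ``Gorenstein duality'' and is where Definition~\ref{xxdef1.3}(1) is used in an essential way (it guarantees $\RHom_A(-,A)$ has bounded cohomological amplitude, so preserves the bounded derived category).

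Once the biduality isomorphism $\id \simeq \HB^d_{A^{op}}\circ\HB^d$ (and symmetrically $\id\simeq \HB^d\circ\HB^d_{A^{op}}$) is established, it follows formally that $\HB^d=\Ext^d_A(-,A)\colon A\Modleftfd\to A^{op}\Modleftfd$ is a contravariant equivalence with quasi-inverse $\Ext^d_{A^{op}}(-,A)$. For the ``as a consequence'' clause: a contravariant equivalence between abelian categories is exact, so $\Ext^d_A(-,A)$ and $\Ext^d_{A^{op}}(-,A)$ are exact on finite-dimensional modules; and for $i\neq d$ the functor $\Ext^i_A(-,A)$ vanishes on all finite-dimensional modules by Definition~\ref{xxdef1.3}(2)--(3), hence is (trivially) exact as well.

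I expect the main obstacle to be the rigorous identification of the biduality natural transformation with the honest module map $S\to\Ext^d_{A^{op}}(\Ext^d_A(S,A),A)$ and the verification that it is an isomorphism: one must be careful that the derived-category computation $\RHom_{A^{op}}(\RHom_A(S,A),A)\simeq S$ actually produces the naive evaluation map in degree $0$ rather than merely an abstract isomorphism, and that the shifts by $[\pm d]$ bookkeep correctly. Reducing to simple modules and an Euler-characteristic/length argument, together with boundedness of $\RHom_A(-,A)$ from finite injective dimension, is the cleanest way to finish, but the degree bookkeeping is the delicate point.
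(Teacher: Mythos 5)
Your proposal is correct and follows essentially the same route as the paper: the paper obtains exactness of $\Ext^d_A(-,A)$ directly from the vanishing in Definition~\ref{xxdef1.3}(2)--(3), and gets the biduality $\Ext^d_{A^{op}}(\Ext^d_A(S,A),A)\cong S$ from Ischebeck's double Ext-spectral sequence, which is exactly the degenerate (one-nonzero-degree) form of your derived-category computation $\RHom_{A^{op}}(\RHom_A(S,A),A)\simeq S$. The shift bookkeeping and naturality of the evaluation map that you flag as the delicate point are precisely what that cited spectral sequence packages up.
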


\begin{proof}
By Definition~\ref{xxdef1.3}(2), $\Ext^d_{A}(-,A)$ is an exact
functor on finite-dimensional left $A$-modules. For every
finite-dimensional left $A$-module $S$, by Ischebeck's double 
Ext-spectral sequence \cite[(0-2)]{ASZ1}, we have
\[
\Ext^d_{A^{op}}(\Ext^d_{A}(S,A),A)\cong S.
\]
A similar statement holds for finite-dimensional right $A$-modules
$S$. The assertion follows, and the consequence is clear.
\end{proof}
Lemma~\ref{xxlem1.4} implies that if $A$ is AS Gorenstein of dimension $d$ 
then $\Ext^d_A(S, A)$ is simple right $A$-module for each finite-dimensional simple left module $S$, and similarly 
on the other side. This shows that Definition~\ref{xxdef1.3} is equivalent to the definition 
of AS Gorenstein in \cite[Definition 3.1]{WZ1} for any algebra $A$ for which all simple modules 
are finite-dimensional. In particular, this is the case for the affine noetherian PI algebras 
of main interest in this paper by \cite[Proposition 3.1]{BG}.

\begin{definition}
\label{xxdef1.5} 
Let $A$ be a noetherian algebra.
\begin{enumerate}
\item[(1)]
We say that $A$ satisfies (L1) (respectively, (R1)) if, for each 
$i\geq 0$, the functor $\Ext^i_A(-,A)$ is exact when applied to 
the category $A \Modleftfd$ (respectively, $\Ext^i_{A^{op}}(-, A)$ 
is exact when applied to $\Modrightfd A$).
\item[(2)]
We say that $A$ satisfies (L2) (respectively, (R2)) if $A$ satisfies
(L1) (respectively, (R1)), and for each 
$i\geq 0$, for $0 \neq S, T \in A \Modleftfd$, $\Ext^i_A(S,A)=0$ if and only if 
$\Ext^i_A(T,A)=0$ (respectively, for $0 \neq S, T \in \Modrightfd A$, $\Ext^i_{A^{op}}(S,A)=0$ if and only if 
$\Ext^i_{A^{op}}(T,A)=0$).
\end{enumerate}
\end{definition}

By \cite[Proposition 3.2]{WZ1}, if $A$ is affine noetherian PI, then 
$A$ is AS Gorenstein if and only if $A$ satisfies (L2) (or 
equivalently, satisfies (R2)). 
The following lemma follows 
from algebra decomposition and \cite[Proposition 3.2]{WZ1}. 
\begin{lemma}
\label{xxlem1.6}
Suppose that $A$ is an affine noetherian PI algebra. If $A$ is a direct
sum of finitely many AS Gorenstein algebras (of possibly different dimensions), then $A$ satisfies {\rm (L1)}
and {\rm (R1)}.
\end{lemma}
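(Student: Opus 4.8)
The plan is to deduce Lemma~\ref{xxlem1.6} from the structure theory already available: an algebra decomposition $A = A_1 \times A_2 \times \cdots \times A_n$ splits the module categories and the $\Ext$ functors, and by hypothesis each factor $A_i$ is AS Gorenstein. The cited \cite[Proposition 3.2]{WZ1} tells us that an affine noetherian PI algebra is AS Gorenstein if and only if it satisfies (L2) (equivalently (R2)), and (L2) contains (L1) as part of its definition (Definition~\ref{xxdef1.5}(2)). So each $A_i$ satisfies (L1) and (R1); the remaining task is to check that (L1) and (R1) are inherited by a finite direct product of algebras each satisfying them. The main obstacle — really the only one — is this bookkeeping about how finite-dimensional modules and their $\Ext$'s behave under the product decomposition, together with confirming that a finite product of affine noetherian PI algebras is again affine noetherian PI so that the hypotheses of the cited proposition apply where needed.

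First I would record the decomposition. Since $A$ is noetherian and writes as a finite direct sum of AS Gorenstein algebras, write $A = \bigoplus_{i=1}^n A_i$ with each $A_i$ AS Gorenstein (and each $A_i$ is again affine noetherian PI, being a quotient of $A$). A left $A$-module $M$ decomposes canonically as $M = \bigoplus_i e_i M$ where $e_i$ is the central idempotent projecting onto $A_i$, and $e_i M$ is naturally an $A_i$-module; moreover $M$ is finite-dimensional over $\kk$ if and only if each $e_i M$ is. The functor $\Ext^j_A(M, A)$ splits compatibly: $\Ext^j_A(M, A) \cong \bigoplus_i \Ext^j_{A_i}(e_i M, A_i)$, because $\Hom_A(-, \bigoplus_i A_i)$ decomposes and the idempotents are central so the decomposition passes to a projective resolution of $M$. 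The analogous statements hold on the right.

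Next I would run the exactness check. Fix $j \geq 0$ and a short exact sequence $0 \to S \to T \to U \to 0$ in $A \Modleftfd$. Applying $e_i$ gives a short exact sequence $0 \to e_i S \to e_i T \to e_i U \to 0$ in $A_i \Modleftfd$ for each $i$ (exactness of the functor $e_i(-)$, which is just a direct summand projection). Since each $A_i$ is AS Gorenstein, it satisfies (L1) by \cite[Proposition 3.2]{WZ1} and Definition~\ref{xxdef1.5}, so $\Ext^j_{A_i}(-, A_i)$ carries this to a short exact sequence. Taking the direct sum over $i$ and using the compatibility isomorphism above, we conclude $\Ext^j_A(-, A)$ sends $0 \to S \to T \to U \to 0$ to a short exact sequence; that is, $\Ext^j_A(-, A)$ is exact on $A \Modleftfd$, so $A$ satisfies (L1). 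The identical argument with $A^{op} = \bigoplus_i A_i^{op}$ and right modules gives (R1), completing the proof.
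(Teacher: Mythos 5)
Your argument is correct and is exactly the route the paper indicates: the paper gives no written proof, saying only that the lemma ``follows from algebra decomposition and \cite[Proposition 3.2]{WZ1},'' and your write-up supplies precisely those details (splitting modules and $\Ext$ along the central idempotents, and invoking the cited proposition to get (L1)/(R1) for each AS Gorenstein summand). The bookkeeping — each $A_i$ is affine noetherian PI as a quotient of $A$, and $\Ext^j_A(M,A)\cong\bigoplus_i\Ext^j_{A_i}(e_iM,A_i)$ — is all sound.
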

One of our main goals is to show that the converse of Lemma 
\ref{xxlem1.6} holds under some extra hypotheses.

We will need the following easy lemmas about algebra decompositions.
\begin{lemma}
\label{xxlem1.7} 
Let $A$ be an algebra.
\begin{enumerate}
\item[(1)]
Let $e$ and $e'$ be two idempotents
such that $eA=Ae'$. Then $e=e'$ is
a central idempotent.
\item[(2)]
Let $I$ be a two sided ideal of $A$ such that
$A=I\oplus B$ as left $A$-modules and $A=I\oplus C$
as right $A$-modules. Then $B=C$ and $A=I\oplus B$ 
as algebras.
\end{enumerate}
\end{lemma}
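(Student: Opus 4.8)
For part (1), the plan is to exploit the identity $eA = Ae'$ directly. Since $e = e\cdot 1 \in eA = Ae'$, we may write $e = ae'$ for some $a\in A$; multiplying on the right by $e'$ and using $e'^2 = e'$ gives $ee' = ae' = e$, so $ee' = e$. Symmetrically, $e' = 1\cdot e' \in Ae' = eA$, so $e' = eb$ for some $b\in A$, and multiplying on the left by $e$ gives $ee' = eb = e'$, whence $ee' = e'$. Combining the two identities yields $e = ee' = e'$. To see that $e$ is central, take any $x\in A$: then $xe \in Ae = Ae' = eA$, so $xe = ey$ for some $y$, and multiplying on the right by $e$ gives $xe = xe\cdot e \cdot \text{(?)}$— more cleanly, $xe = ey$ gives $exe = e\cdot ey = ey = xe$, so $exe = xe$; running the same argument with $ex \in eA = Ae$ gives $ex = ze$ for some $z$, hence $exe = z e\cdot e = ze = ex$, so $exe = ex$. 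Therefore $xe = exe = ex$ for all $x\in A$, i.e.\ $e$ is central. None of this is hard; it is just a careful bookkeeping of one-sided multiplications.

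For part (2), the idea is to extract the idempotents cutting out the summand and apply part (1). Write $1 = i + b$ with $i\in I$, $b\in B$ using the left-module decomposition $A = I\oplus B$; since left multiplication by elements of $A$ preserves both $I$ and $B$, for any $a\in A$ we get $a = ai + ab$ with $ai\in I$, $ab\in B$, so right multiplication by $b$ is the projection onto $B$ along $I$, which forces $b^2 = b$ (apply the projection to $b\in B$). Likewise write $1 = i' + c$ with $i'\in I$, $c\in C$ using $A = I\oplus C$ as right modules; the same reasoning shows $c^2 = c$ and that $c$ is a right-projection idempotent. Now I claim $bA = Ac$: indeed $bA = \{a : ia' \text{-part is } 0\}$... more precisely, $Ab = B$ (left span under the left projection) — wait, I need to be careful about sides, so the cleanest route is: $B = \{a\in A : ia = 0 \text{ under left proj}\}$ is a left ideal complementing $I$, hence $B = Ae$ for a suitable idempotent $e$ with $1 - e$ corresponding to $I$; similarly $C = e'A$ for an idempotent $e'$. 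Since $I$ is a two-sided ideal, $I = A(1-e) = (1-e')A$, so $Ae = A - I$'s complement on the left and $e'A$ on the right, giving $eA = Ae'$ after identifying complements; then part (1) applies to conclude $e = e'$ is a central idempotent, whence $B = Ae = eA = C$ and $A = Ae \oplus A(1-e)$ is a direct sum of the algebras $Ae$ and $A(1-e) = I$.

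\emph{The main obstacle} will be keeping the left/right conventions straight in part (2): one must identify the complementary summands $B$ and $C$ as principal one-sided ideals generated by idempotents, check that the two-sidedness of $I$ forces the complementary idempotents to satisfy exactly the hypothesis $eA = Ae'$ of part (1), and then verify that the central idempotent splits $A$ not merely as a module but as an algebra (which is automatic once $e$ is central, since then $A \cong eAe \times (1-e)A(1-e) = eA \times (1-e)A$ and the cross terms $eA(1-e)$, $(1-e)Ae$ vanish by centrality). Everything else is a routine diagram of projections; I do not anticipate any real difficulty, and the lemma is stated as ``easy'' precisely because the content is entirely this idempotent bookkeeping.
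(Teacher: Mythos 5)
Your argument is correct and follows essentially the same route as the paper: the identical one-line idempotent computation for part (1), and for part (2) the extraction of idempotents from the two module decompositions followed by an appeal to part (1) via the two-sidedness of $I$. The only slip is the phrase ``giving $eA = Ae'$ after identifying complements,'' which puts the cart before the horse: what you actually have is $A(1-e) = I = (1-e')A$, so part (1) should be invoked for the pair $1-e'$, $1-e$ (equivalently, for the idempotents generating $I$ on each side, which is exactly what the paper does), after which $e = e'$ is central and $B = Ae = eA = e'A = C$ follows.
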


\begin{proof} (1) Since $eA=Ae'$,
there are elements $a,b\in A$ such that
$e=ae'$ and $e'=eb$. Then 
\[
e=ae'=ae' e'=ee'=e eb=eb=e'.
\]
For every $a\in A$, $ea, ae\in eA=Ae$.
Hence $ea=eae=ae$. This shows that
$e$ is a central idempotent.

(2) Since $A=I\oplus B$, $1=e+(1-e)$ 
where $e\in I$ and $(1-e)\in B$. Since
$e(1-e)\in I\cap B=0$, therefore $e=e^2$ and so $e$ is 
idempotent. Since $Ae\oplus A(1-e)=A=I\oplus B$,
we obtain that $Ae=I$ and $A(1-e)=B$. 

Similarly, there is an idempotent $e'$ such that
$I=e'A$. By part (1), $e=e'$ which is central.
Therefore $A=eA\oplus (1-e)A$ where both
$eA$ and $(1-e)A$ are two-sided ideals of $A$.
The assertion follows.
\end{proof}

We say that $A$ is \emph{indecomposable} if $A$ is not 
isomorphic to a direct sum of two algebras.
The following lemma is standard.

\begin{lemma} \cite[Proposition 22.2]{Lam}
\label{xxlem1.8}
Let $A$ be a noetherian algebra. Then 
\[
A=\bigoplus_{i\in I} A_i
\]
for a finite set of indecomposable algebras
$\{A_i\}_{i\in I}$. Further, this decomposition 
is unique up to permutation.
\end{lemma}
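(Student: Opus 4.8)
The plan is to translate the statement into elementary facts about central idempotents. The first step is to record the standard correspondence, which follows from Lemma~\ref{xxlem1.7}: an algebra direct sum decomposition $A = B \oplus C$ is the same datum as a central idempotent $e \in A$ with $B = eA$ and $C = (1-e)A$, and iterating this, a decomposition of $A$ into $n$ algebra summands is the same as an expression $1 = e_1 + \cdots + e_n$ of the identity as a sum of pairwise orthogonal nonzero central idempotents. Moreover a summand $e_i A$ is an indecomposable algebra exactly when $e_i$ is \emph{primitive} among central idempotents, i.e.\ cannot be written as a sum of two nonzero orthogonal central idempotents. Thus the whole lemma becomes a statement about the set $E$ of central idempotents of $A$, which is a Boolean algebra under $e \le f \iff ef = e$, with the join of orthogonal idempotents given by their sum.

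For existence, the point is that $e \le f$ in $E$ is equivalent to the inclusion $eA \subseteq fA$ of two-sided ideals, so the ascending chain condition on two-sided ideals of the noetherian algebra $A$ forces the ACC on $E$. A Boolean algebra with the ACC is finite; letting $e_1, \dots, e_n$ be its atoms, we have $1 = e_1 + \cdots + e_n$ (an orthogonal sum) with each $e_i$ primitive, so $A = \bigoplus_{i=1}^{n} e_i A$ exhibits $A$ as a finite direct sum of indecomposable algebras.

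For uniqueness, suppose $A = \bigoplus_{i\in I} A_i = \bigoplus_{j\in J} B_j$ are two decompositions into indecomposable algebras, with corresponding complete systems of pairwise orthogonal primitive central idempotents $\{e_i\}_{i \in I}$ and $\{f_j\}_{j \in J}$, where $A_i = e_iA$ and $B_j = f_jA$. Fixing $i$ and writing $e_i = e_i \cdot 1 = \sum_{j\in J} e_i f_j$, the summands $e_i f_j$ are pairwise orthogonal central idempotents whose sum is $e_i$, so primitivity of $e_i$ forces all but one of them to vanish and the survivor to equal $e_i$; thus $e_i \le f_{\sigma(i)}$ for a unique $\sigma(i) \in J$. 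Running the same argument with the two decompositions interchanged yields $f_{\sigma(i)} \le e_{i'}$ for some $i'$; then $e_i \le e_{i'}$ with the $e$'s pairwise orthogonal and nonzero forces $i = i'$, whence $e_i = f_{\sigma(i)}$. Therefore $\sigma$ is a bijection $I \to J$ with $A_i = B_{\sigma(i)}$ for all $i$, which is the asserted uniqueness up to permutation.

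There is no real obstacle here: the statement is \cite[Proposition 22.2]{Lam} and the argument is entirely formal. The one place requiring a little care is to work with \emph{central} idempotents throughout — so that each summand is a two-sided ideal and a genuine algebra factor — and to invoke the ACC for two-sided ideals, which is part of the noetherian hypothesis, rather than for one-sided ideals.
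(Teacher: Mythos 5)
The paper offers no proof of this lemma --- it is quoted directly from \cite[Proposition 22.2]{Lam} with the remark that it is standard --- and your argument is a correct, self-contained proof along exactly the lines of the cited result: existence and uniqueness of the block decomposition via primitive central idempotents, with Lemma~\ref{xxlem1.7} supplying the dictionary between algebra summands and central idempotents. The only step a reader might want spelled out is the claim that a Boolean algebra with ACC is finite (complementation converts ACC into DCC, so every nonzero element dominates an atom, and an infinite family of pairwise orthogonal atoms would produce a strictly ascending chain of joins), but this is routine and everything else is exactly as in the reference.
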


\section{Some key lemmas}
\label{xxsec2}

While we primarily consider the GK dimension of modules as our dimension 
function in this paper, in this section it is convenient to use also the (Gabriel-Rentschler) Krull dimension of a module $M$, 
which we denote by $\Kdim M$. Fortunately, if $A$ is finite over its affine center, 
then by \cite[Lemma 1.2(3)]{WZ1}, for all finitely generated left (or right) $A$-modules $M$, 
\[\Kdim M=\GKdim M.\]
For any such module $M$, for every $s$, 
let $\tau_s(M)$ denote the largest submodule of $M$ with GK dimension which is less than or equal to $s$.
If $M$ is an $(A, A)$-bimodule which is finitely generated
on both sides, then 
\[\tau_s(_AM)=\tau_s(M_A)
\]
since GK dimension is symmetric \cite[Corollary 5.4]{KL}. In particular,
\[\tau_s(_AA)=\tau_s(A_A)
\]
which we denote by $\tau_s(A)$. 

Let us introduce some temporary notation. Let $A\modleft$
denote the category of finitely generated left $A$-modules. For
each integer $d$, let $A\modleft_{d}$ denote the category
of finitely generated left $A$-modules of Krull dimension no 
more than $d$. The following is a key lemma.

\begin{lemma}
\label{xxlem2.1}
Let $A$ be a noetherian PI complete semilocal algebra and $M$
be a finitely generated left $A$-module. If $\Hom_A(M,-)$
is exact when applied to $A\modleft_{0}$, then $M$ is projective.
\end{lemma}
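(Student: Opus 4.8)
The plan is to show that $M$ is projective by verifying that the natural surjection from a projective cover splits, using the exactness hypothesis on $\Hom_A(M,-)$ restricted to modules of Krull dimension zero. First I would recall that since $A$ is PI, noetherian, and semilocal, the Jacobson radical $\mathfrak{m} = J(A)$ is such that $A/\mathfrak{m}$ is semisimple artinian, and $\mathfrak{m}$ is finitely generated. Let $P \to M$ be a projective cover of $M$ in $A\modleft$ (which exists since $A$ is semiperfect, being noetherian semilocal), with kernel $K$, so we have a short exact sequence $0 \to K \to P \to M \to 0$ with $K \subseteq \mathfrak{m}P$. I want to prove $K = 0$.

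The key idea is to feed the finite-length (hence $\Kdim \le 0$) quotients of $A$ into the exactness hypothesis. Consider, for each $n \ge 1$, the module $A/\mathfrak{m}^n$, which has finite length and therefore lies in $A\modleft_0$. Applying $\Hom_A(M,-)$ to the sequence $0 \to \mathfrak{m}^n/\mathfrak{m}^{n+1} \to A/\mathfrak{m}^{n+1} \to A/\mathfrak{m}^n \to 0$ and using exactness, one gets compatible surjections allowing control of $\Hom_A(M, A/\mathfrak{m}^n)$. More directly: apply $\Hom_A(-, A/\mathfrak{m}^n)$ — no, we need $\Hom_A(M,-)$ covariant exact — so apply $\Hom_A(M,-)$ to $0 \to K/\mathfrak{m}^n K \cap K \to \dots$; the cleanest route is to apply $\Hom_A(M, -)$ to $0 \to K \to P \to M \to 0$ after reducing mod powers of $\mathfrak{m}$. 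Concretely, tensoring the sequence with $A/\mathfrak{m}^n$ is not exact, but instead I would argue: $\Hom_A(M, M) \to \Hom_A(M, M/\mathfrak{m}^n M)$ — the point is that the identity map $M \to M$ must lift. Better: form $0 \to K/(K \cap \mathfrak{m}^n P) \to P/\mathfrak{m}^n P \to M/\mathfrak{m}^n M \to 0$; each term has finite length, hence is in $A\modleft_0$, so $\Hom_A(M, P/\mathfrak{m}^n P) \to \Hom_A(M, M/\mathfrak{m}^n M)$ is surjective. The canonical map $M \twoheadrightarrow M/\mathfrak{m}^n M$ therefore lifts to some $\varphi_n\colon M \to P/\mathfrak{m}^n P$. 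Composing with $P/\mathfrak{m}^n P \to M/\mathfrak{m}^n M$ recovers the quotient map, so $\varphi_n$ is a section mod $\mathfrak{m}^n$; in particular $\varphi_n$ is surjective by Nakayama (its image plus $\mathfrak{m}(P/\mathfrak{m}^n P)$ is everything). Then one passes to the inverse limit over $n$: the maps $\varphi_n$ can be chosen compatibly (using surjectivity of the transition maps on the relevant $\Hom$ sets, which are finite-dimensional, so the inverse system satisfies Mittag--Leffler), yielding $\varphi\colon M \to \widehat{P} = \varprojlim P/\mathfrak{m}^n P$ splitting $P \to M$ after completion. Since $A$ is noetherian and $M, P$ are finitely generated, the $\mathfrak{m}$-adic completion is faithfully flat over the semilocal ring $A$ localized appropriately — here I would invoke that for a noetherian semilocal ring, $\widehat{A}$ is faithfully flat, so a splitting after completion descends to a splitting of $0 \to K \to P \to M \to 0$, whence $K$ is a direct summand of $P$ with $K \subseteq \mathfrak{m}P$, forcing $K = 0$ (again Nakayama, since $K$ is finitely generated). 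Thus $M \cong P$ is projective.

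The main obstacle I anticipate is making the inverse-limit/completion argument fully rigorous: one must ensure the liftings $\varphi_n$ are chosen compatibly (the Mittag--Leffler condition) and that the resulting map over $\widehat{A}$ genuinely descends to $A$. An alternative that sidesteps completion: since $A$ is PI and module-finite over its center $Z$, and semilocal, $Z$ is a semilocal noetherian ring, and one can localize at the finitely many maximal ideals and argue locally, or use that over such $A$ the category $A\modleft_0$ consists exactly of finite-length modules and that $\bigcap_n \mathfrak{m}^n = 0$ on finitely generated modules (Krull intersection, valid here as $A$ is noetherian and $\mathfrak{m} \subseteq J(A)$). With $\bigcap_n \mathfrak{m}^n K = 0$, if $K \ne 0$ pick $0 \ne x \in K$ with $x \notin \mathfrak{m}^n K$ for some $n$; the finite-length module $P/\mathfrak{m}^n P$ detects this, and the lifted section $\varphi_n$ already gives $P/\mathfrak{m}^n P \cong M/\mathfrak{m}^n M \oplus (K + \mathfrak{m}^n P)/\mathfrak{m}^n P$ — but a section mod $\mathfrak{m}^n P$ means $K \subseteq \mathfrak{m}^n P + (\text{image of a complement})$, and tracking this forces $K \subseteq \mathfrak{m}^n P$ for all $n$, hence $K \subseteq \bigcap_n \mathfrak{m}^n P = 0$. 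This purely $\mathfrak{m}$-adic version avoids completion entirely and is likely the cleanest; I would write the proof that way, with the Krull intersection theorem doing the work at the end.
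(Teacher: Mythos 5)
Your second, ``purely $\mathfrak{m}$-adic'' argument is essentially the paper's proof: both take a projective cover $P\to M$ with superfluous kernel $K$, show $K\subseteq\mathfrak{m}^nP$ for every $n$, and finish with $\bigcap_n\mathfrak{m}^nP=0$. The only real difference is the middle step: the paper uses the adjunction $\Hom_{A/\mathfrak{m}^n}(A/\mathfrak{m}^n\otimes_AM,-)\cong\Hom_A(M,-)$ to conclude that $M/\mathfrak{m}^nM$ is projective over $A/\mathfrak{m}^n$ and then invokes uniqueness of projective covers, whereas you lift $M\twoheadrightarrow M/\mathfrak{m}^nM$ through $P/\mathfrak{m}^nP$ directly; both work. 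Three points need repair. First, ``semiperfect, being noetherian semilocal'' is false as stated --- a semilocal noetherian ring need not have idempotents lifting modulo the radical (e.g.\ $\mathbb{Z}$ localized away from $\{2,3\}$); it is the \emph{completeness} hypothesis that gives semiperfectness, as in the paper. Second, ``$\bigcap_n\mathfrak{m}^n=0$ because $A$ is noetherian and $\mathfrak{m}\subseteq J(A)$'' is not a valid justification for noncommutative rings (this is Jacobson's conjecture); what is actually used is the PI hypothesis, via \cite[Theorem 9.13]{GW}. (Also, $A$ is not assumed module-finite over its center in this lemma, so drop that.) Third, the phrase ``tracking this forces $K\subseteq\mathfrak{m}^nP$'' should be replaced by an actual argument: since $\varphi_n$ kills $\mathfrak{m}^nM$, it induces a splitting of $g_n:P/\mathfrak{m}^nP\to M/\mathfrak{m}^nM$, so $\ker g_n=(K+\mathfrak{m}^nP)/\mathfrak{m}^nP$ is a direct summand of $P/\mathfrak{m}^nP$; but $K\subseteq\mathfrak{m}P$ places this summand inside $\mathfrak{m}\cdot(P/\mathfrak{m}^nP)$, the radical of $P/\mathfrak{m}^nP$, and a finitely generated direct summand contained in the radical is zero by Nakayama, giving $K\subseteq\mathfrak{m}^nP$. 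I would discard the inverse-limit/completion variant entirely, as you suggest.
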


\begin{proof}
Since $A$ is noetherian and complete semilocal, it is semiperfect 
in the sense of \cite[Definition 23.1]{Lam}. Then by \cite[Proposition 24.12]{Lam}, every finitely 
generated $A$-module has a projective cover. 
Let $P$ be the projective cover of $M$ and let $K$ be the kernel of 
the surjective map $P\to M$. Then we have a short exact sequence
\[
0\to K\xrightarrow{f} P\xrightarrow{g} M\to 0.
\]
It suffices to show that $K=0$. 

Let ${\mathfrak m}$ be the Jacobson radical of $A$. Then a 
finitely generated left $A$-module $N$ has finite
length if and only if ${\mathfrak m}^s N=0$ for some integer 
$s$. Let $n$ be any positive integer. By adjunction, 
\[\Hom_{A/{\mathfrak m}^n}(A/{\mathfrak m}^n\otimes_A M,-)
\cong \Hom_{A}(M,-)
\]
when applied to modules over $A/{\mathfrak m}^n$. Hence
$\Hom_{A/{\mathfrak m}^n}(A/{\mathfrak m}^n\otimes_A M,-)$
is exact when applied to modules in $A/{\mathfrak m}^n\modleft_{0} = A/{\mathfrak m}^n\modleft$.
Since $A/{\mathfrak m}^n\otimes_A M$ is a finitely generated $A/{\mathfrak m}^n$-module, 
it follows that $A/{\mathfrak m}^n\otimes_A M$ 
is a projective module over $A/{\mathfrak m}^n$. 

It is easy to check that $A/{\mathfrak m}^n\otimes_A P$ 
is a projective cover of $A/{\mathfrak m}^n\otimes_A M$.
Then 
\[
A/{\mathfrak m}^n\otimes_A g:\qquad A/{\mathfrak m}^n\otimes_A P
\to A/{\mathfrak m}^n\otimes_A M
\]
is an isomorphism. Therefore 
\[
A/{\mathfrak m}^n\otimes_A f:\qquad A/{\mathfrak m}^n\otimes_A K
\to A/{\mathfrak m}^n\otimes_A P
\]
is the zero map for all $n\geq 1$. Equivalently,
$f(K)\subseteq {\mathfrak m}^n P$ for all $n$. 
Since $A$ is PI, by \cite[Theorem 9.13]{GW}, 
$\bigcap_{n} {\mathfrak m}^n=0$. Hence $\bigcap_{n} {\mathfrak m}^n P=0$, 
as $P$ is a finitely generated projective $A$-module, 
and consequently, $f(K)=0$. Since $f$ is monomorphism, 
$K=0$ as required. 
\end{proof}

Assume that $A$ is finitely generated over its affine center $Z(A)$.
Let ${\mathfrak n}$ be a maximal ideal 
of $Z(A)$. Let $Z_{\mathfrak n}$ denote the completion of the 
commutative noetherian local ring $Z(A)_{\mathfrak n}$ with 
respect to its maximal ideal. Then 
\begin{enumerate}
\item[(1)]
$Z_{\mathfrak n}$ is noetherian \cite[Theorem 10.26]{AM}.
\item[(2)]
$A_{\mathfrak n}:=Z_{\mathfrak n}\otimes_{Z(A)} A$ is finitely generated
over $Z_{\mathfrak n}$ (but its center could be bigger
than $Z_{\mathfrak n}$).
\item[(3)]
$A_{\mathfrak n}$ is complete semilocal.
\item[(4)]
The functor $Z_{\mathfrak n}\otimes_{Z(A)}-: {A\modleft}
\to {A_{\mathfrak n}\modleft}$ is exact \cite[Proposition 10.14]{AM}.
\end{enumerate}

Every left $A$-module $M$ can be considered as an $(A,Z(A))$-bimodule.
It is well-known that if $M$ and $W$ are left $A$-modules, then $\Ext^i_A(M,W)$ 
has a central $(Z(A), Z(A))$-bimodule structure. 

\begin{lemma}
\label{xxlem2.2} 
Let $A$ be a finitely generated module over its affine center.
Suppose $M$ is a finitely generated left $A$-module. Then $M$ is 
projective over $A$ if and only if 
\[
M\otimes_{Z(A)} Z_{\mathfrak n} \cong Z_{\mathfrak n}\otimes_{Z(A)} M 
\]
is projective over $A_{\mathfrak n}$ for all maximal ideals 
${\mathfrak n}$ of $Z(A)$. 
\end{lemma}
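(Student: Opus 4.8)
The statement is a "projectivity is local at maximal ideals of the center" criterion for modules over an algebra $A$ module-finite over its affine center $Z = Z(A)$. The plan is to prove the two directions separately, with the "only if" direction being essentially formal and the "if" direction being the substantive one that invokes Lemma~\ref{xxlem2.1}.

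For the "only if" direction: suppose $M$ is projective over $A$. Then $M$ is a direct summand of a finite free $A$-module $A^{(n)}$. Since $Z_{\mathfrak n} \otimes_{Z} -$ is an additive functor, $M \otimes_Z Z_{\mathfrak n}$ is a direct summand of $(A \otimes_Z Z_{\mathfrak n})^{(n)} = A_{\mathfrak n}^{(n)}$, hence projective over $A_{\mathfrak n}$. (The natural isomorphism $M \otimes_Z Z_{\mathfrak n} \cong Z_{\mathfrak n} \otimes_Z M$ is just the symmetry of tensor product over the commutative ring $Z$, so the two displayed modules really coincide.)

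For the "if" direction, suppose $M_{\mathfrak n} := Z_{\mathfrak n} \otimes_Z M$ is projective over $A_{\mathfrak n}$ for every maximal ideal ${\mathfrak n}$ of $Z$. I want to conclude $M$ is projective over $A$. Since $A$ is noetherian and $M$ is finitely generated, it suffices (by a standard argument — e.g.\ comparing a projective resolution or using that $\mathrm{Ext}^1_A(M,-)$ commutes with the relevant localizations) to show $\mathrm{Ext}^1_A(M, N) = 0$ for all finitely generated $N$, and more to the point, since $\mathrm{Ext}^1_A(M,-)$ is a finitely generated $Z$-module, it is enough to show it vanishes after tensoring with each $Z_{\mathfrak n}$; but tensoring up to $Z_{\mathfrak n}$ commutes with $\mathrm{Ext}$ here because $A$ is module-finite over $Z$ (so $M$ has a resolution by finite free $A$-modules) and $Z_{\mathfrak n}$ is flat over $Z_{\mathfrak n'}$-localizations — actually the cleanest route is: $\mathrm{Ext}^i_A(M,A) \otimes_Z Z_{\mathfrak n} \cong \mathrm{Ext}^i_{A_{\mathfrak n}}(M_{\mathfrak n}, A_{\mathfrak n})$ since $Z \to Z_{\mathfrak n}$ is flat (localization followed by completion, the latter flat over the noetherian local ring $Z_{\mathfrak n}$) and $M$ admits a projective (indeed finite free) resolution over $A$ by noetherianity. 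Given $M_{\mathfrak n}$ projective over $A_{\mathfrak n}$, and $A_{\mathfrak n}$ complete semilocal noetherian PI, Lemma~\ref{xxlem2.1} applies in the reverse direction trivially (projective $\Rightarrow$ $\mathrm{Hom}$ exact), but what I actually need is that projectivity of $M_{\mathfrak n}$ over all $A_{\mathfrak n}$ forces projectivity of $M$ over $A$. I would run this as follows: let $P_\bullet \to M$ be a resolution by finitely generated free $A$-modules, and let $K$ be the kernel of $P_0 \to M$; then $K$ is finitely generated (noetherian), and $0 \to K \to P_0 \to M \to 0$ stays exact and split after applying $Z_{\mathfrak n} \otimes_Z -$ (exact by item (4) above, split because $M_{\mathfrak n}$ is projective). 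So $\mathrm{Ext}^1_A(M, K) \otimes_Z Z_{\mathfrak n} \cong \mathrm{Ext}^1_{A_{\mathfrak n}}(M_{\mathfrak n}, K_{\mathfrak n}) = 0$ for all ${\mathfrak n}$. Since $\mathrm{Ext}^1_A(M,K)$ is a finitely generated $Z$-module and its localization-completion at every maximal ideal of $Z$ vanishes, and completion of a localization is faithfully flat on finitely generated modules at that ideal, we get $\mathrm{Ext}^1_A(M,K)_{\mathfrak n} = 0$ for all ${\mathfrak n}$, hence $\mathrm{Ext}^1_A(M,K) = 0$ by the usual local-global principle for finitely generated modules over the commutative noetherian ring $Z$. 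Thus $0 \to K \to P_0 \to M \to 0$ splits, so $M$ is a direct summand of $P_0$, i.e.\ projective.

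**Main obstacle.** The delicate point is justifying the interchange $\mathrm{Ext}^i_A(M,N) \otimes_Z Z_{\mathfrak n} \cong \mathrm{Ext}^i_{A_{\mathfrak n}}(M_{\mathfrak n}, N_{\mathfrak n})$, i.e.\ that base change along $Z \to Z_{\mathfrak n}$ commutes with $\mathrm{Ext}$. This needs: (i) $M$ has a resolution by finitely generated \emph{free} (or at least projective) $A$-modules — fine since $A$ is noetherian — and (ii) $Z_{\mathfrak n}$ is flat over $Z$ after localizing, or more precisely that the functor $Z_{\mathfrak n} \otimes_Z -$ is exact on finitely generated $A$-modules, which is exactly item (4) in the list preceding the lemma (citing \cite[Proposition 10.14]{AM}), together with the fact that $N_{\mathfrak n}$ is finitely generated over $A_{\mathfrak n}$ which is noetherian. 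One must also be a little careful that $A_{\mathfrak n}$ may have center strictly larger than $Z_{\mathfrak n}$ (noted in item (2)), but this does not affect the argument: $A_{\mathfrak n}$ is still noetherian, PI, and complete semilocal, which is all Lemma~\ref{xxlem2.1} and the projective-cover machinery require. The local-global passage for the finitely generated $Z$-module $\mathrm{Ext}^1_A(M,K)$ — from vanishing of $(-) \otimes_Z Z_{\mathfrak n}$ at all maximal ${\mathfrak n}$ to vanishing of $(-)_{\mathfrak n}$ and then of the module itself — is routine commutative algebra (completion is faithfully flat over a noetherian local ring, and a finitely generated module over a commutative noetherian ring that vanishes at every maximal ideal is zero).
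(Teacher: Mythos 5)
Your proof is correct and follows essentially the same route as the paper: the easy direction via base change of a direct summand, and the hard direction via the isomorphism $\Ext^i_A(M,W)\otimes_{Z(A)}Z_{\mathfrak n}\cong \Ext^i_{A_{\mathfrak n}}(M_{\mathfrak n},W_{\mathfrak n})$ (which the paper simply cites from \cite[Lemma 3.7]{YZ3}) combined with the local-global vanishing principle for finitely generated $Z(A)$-modules. The paper phrases the second direction as a contradiction with an arbitrary $W$ and $i>0$ rather than specializing to $\Ext^1$ of the syzygy, and it never invokes Lemma~\ref{xxlem2.1} here (that is reserved for Proposition~\ref{xxpro2.3}), as you correctly ended up noticing.
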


\begin{proof} Since $M \otimes_{Z(A)} Z_{\mathfrak{n}} \cong M \otimes_A A_{\mathfrak{n}}$, one implication is clear. For the other implication,
assume that $M\otimes_{Z(A)} Z_{\mathfrak n}$ is projective 
for all maximal ideals ${\mathfrak n}$ of $Z(A)$. If $M$ is not 
projective, then there is a finitely generated $A$-module $W$ 
such that $\Ext^i_A(M,W)\neq 0$ for some $i>0$. Let  
${\mathfrak n}$ be a maximal ideal of $Z(A)$ such that 
\[
\Ext^i_A(M,W)\otimes_{Z(A)} Z_{\mathfrak n}\neq 0.
\]
By \cite[Lemma 3.7]{YZ3},
\[
\Ext^i_{A_{\mathfrak n}}(M\otimes_{Z(A)} Z_{\mathfrak n},
W\otimes_{Z(A)} Z_{\mathfrak n})
\cong
\Ext^i_A(M,W)\otimes_{Z(A)} Z_{\mathfrak n}\neq 0.
\]
Hence $M\otimes_{Z(A)} Z_{\mathfrak n}$ is not projective,
a contradiction.
\end{proof}

We will use the following result in the analysis of the 
dualizing complex over $A$.

\begin{proposition}
\label{xxpro2.3}
Let $A$ be a finitely generated module over its affine center.
Suppose $M$ is a finitely generated left $A$-module such that
$\Hom_A(M,-)$ is exact on finite-dimensional left $A$-modules. 
Then $M$ is projective.
\end{proposition}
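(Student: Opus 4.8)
The plan is to reduce Proposition~\ref{xxpro2.3} to the complete semilocal case handled by Lemma~\ref{xxlem2.1}, using the faithfully-flat descent of projectivity from Lemma~\ref{xxlem2.2}. So first I would fix a maximal ideal $\mathfrak{n}$ of $Z(A)$ and pass to $A_{\mathfrak n} = Z_{\mathfrak n}\otimes_{Z(A)} A$, which by the bulleted facts following Lemma~\ref{xxlem2.1} is a noetherian PI complete semilocal algebra, and set $M_{\mathfrak n} = Z_{\mathfrak n}\otimes_{Z(A)} M$, a finitely generated $A_{\mathfrak n}$-module. By Lemma~\ref{xxlem2.2} it suffices to show $M_{\mathfrak n}$ is projective over $A_{\mathfrak n}$ for every such $\mathfrak n$, and by Lemma~\ref{xxlem2.1} it suffices in turn to show that $\Hom_{A_{\mathfrak n}}(M_{\mathfrak n}, -)$ is exact on $A_{\mathfrak n}\modleft_0$, the category of finite-length (equivalently finite-dimensional, since $A_{\mathfrak n}$ is PI with finite-dimensional simples) left $A_{\mathfrak n}$-modules.

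The crux is therefore to transfer the hypothesis on $M$ to $M_{\mathfrak n}$. A finite-length $A_{\mathfrak n}$-module $N$ is annihilated by some power of the maximal ideal of $Z_{\mathfrak n}$, hence by $\mathfrak{n}^s$, so it is supported only at $\mathfrak n$ and is naturally a finite-length module over $A/\mathfrak{n}^s A$ — in particular a finite-dimensional $A$-module. I would then invoke the base-change isomorphism (the same tool used in Lemma~\ref{xxlem2.2}, namely \cite[Lemma 3.7]{YZ3}, or a direct adjunction argument) to identify, for such $N$ viewed as an $A$-module,
\[
\Ext^i_{A_{\mathfrak n}}(M_{\mathfrak n}, N) \cong \Ext^i_{A_{\mathfrak n}}(M_{\mathfrak n}, N\otimes_{Z(A)}Z_{\mathfrak n}) \cong \Ext^i_A(M, N)\otimes_{Z(A)} Z_{\mathfrak n} \cong \Ext^i_A(M, N),
\]
where the middle isomorphism uses that $M$ is finitely presented and $Z_{\mathfrak n}$ is flat over $Z(A)$, and the last uses that $\Ext^i_A(M,N)$ is already a $Z_{\mathfrak n}$-module since $N$ is. Given this, the hypothesis that $\Hom_A(M,-)$ is exact on $A\Modleftfd$ says precisely that $\Ext^i_A(M,N)=0$ for $i\ge 1$ and all finite-dimensional $A$-modules $N$; feeding in the finite-length $A_{\mathfrak n}$-modules $N$ above gives $\Ext^i_{A_{\mathfrak n}}(M_{\mathfrak n}, N) = 0$ for $i\ge 1$, i.e.\ $\Hom_{A_{\mathfrak n}}(M_{\mathfrak n},-)$ is exact on $A_{\mathfrak n}\modleft_0$. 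Then Lemma~\ref{xxlem2.1} yields $M_{\mathfrak n}$ projective, and Lemma~\ref{xxlem2.2} finishes.

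The main obstacle is the bookkeeping in the base-change step: one must be careful that exactness of $\Hom_A(M,-)$ on \emph{all} finite-dimensional $A$-modules is genuinely equivalent to vanishing of higher $\Ext^i_A(M,-)$ there (this is immediate from the long exact sequence plus the fact that every finite-dimensional module embeds in, and is a quotient of, finite-dimensional modules), and that the flat-base-change isomorphism for $\Ext$ applies — this requires $M$ finitely generated over the noetherian ring $A$, hence finitely presented, and $Z_{\mathfrak n}$ flat over $Z(A)$, both of which hold under our standing hypothesis. A secondary point to check is that $A_{\mathfrak n}$ really satisfies the hypotheses of Lemma~\ref{xxlem2.1}: it is PI because it is module-finite over the commutative ring $Z_{\mathfrak n}$, noetherian and complete semilocal by the bulleted remarks, so this is routine.
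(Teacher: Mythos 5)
Your proposal follows essentially the same route as the paper's proof: reduce via Lemma~\ref{xxlem2.2} to showing $M_{\mathfrak n}$ is projective over $A_{\mathfrak n}$, observe that finite-length $A_{\mathfrak n}$-modules are finite-dimensional $A$-modules, transfer the exactness hypothesis through the base-change isomorphism of \cite[Lemma 3.7]{YZ3}, and conclude with Lemma~\ref{xxlem2.1}. That skeleton is correct and is exactly what the paper does.

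One step in your write-up is not justified as stated, though it is also not needed. You reformulate the hypothesis as ``$\Ext^i_A(M,N)=0$ for all $i\geq 1$ and all finite-dimensional $N$'' and claim this is ``immediate from the long exact sequence plus the fact that every finite-dimensional module embeds in, and is a quotient of, finite-dimensional modules.'' It is not: exactness of $\Hom_A(M,-)$ on a short exact sequence $0\to N_1\to N_2\to N_3\to 0$ of finite-dimensional modules only tells you the connecting map $\Hom_A(M,N_3)\to \Ext^1_A(M,N_1)$ vanishes, not that $\Ext^1_A(M,N_1)$ itself vanishes; an element of $\Ext^1_A(M,N)$ is an extension $0\to N\to E\to M\to 0$ whose middle term need not be finite-dimensional, so it need not arise from the finite-dimensional world at all. (The vanishing is of course true \emph{a posteriori}, since the proposition concludes that $M$ is projective, but that is circular here.) Fortunately your own base-change isomorphism in degree $i=0$, namely the natural isomorphism $\Hom_{A_{\mathfrak n}}(M_{\mathfrak n},N)\cong \Hom_A(M,N)$ for finite-length $A_{\mathfrak n}$-modules $N$, already transfers exactness of $\Hom$ directly, since every short exact sequence of finite-length $A_{\mathfrak n}$-modules is a short exact sequence of finite-dimensional $A$-modules. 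Deleting the Ext-vanishing detour and arguing at the level of $\Hom$ makes your proof correct and identical in substance to the paper's.
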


\begin{proof} By Lemma~\ref{xxlem2.2}, it suffices to show that
$M_{\mathfrak n}:=M\otimes_{Z(A)} Z_{\mathfrak n}$ is projective 
for all maximal ideals ${\mathfrak n}$ of $Z(A)$. 

Let $W$ be a finite-dimensional left $A_{\mathfrak n}$-module. Then 
\[
W\cong A_{\mathfrak n}\otimes_A W\cong 
W\otimes_{Z(A)} Z_{\mathfrak n}.
\]
By \cite[Lemma 3.7]{YZ3},
\[
\begin{aligned}
\Hom_{A_{\mathfrak n}}(M_{\mathfrak n}, W)
&\cong \Hom_{A}(M, W).
\end{aligned}
\]
Since $Z(A)/{\mathfrak n}$ is finite-dimensional, 
every finitely generated artinian module over $A_{\mathfrak n}$ is 
finite-dimensional. Hence, by hypothesis 
$\Hom_{A_{\mathfrak n}}(M_{\mathfrak n}, -) \cong \Hom_A(M, -)$ is exact when 
applied to objects in $A_{\mathfrak n}\modleft_0$. 
By Lemma~\ref{xxlem2.1}, $M_{\mathfrak n}$ is projective.
\end{proof}

We need one more homological lemma, which depends on the basic properties of 
Krull dimension for PI algebras.
\begin{lemma}
\label{xxlem2.4}
Let $A$ be a noetherian PI algebra of finite Krull dimension.
Let $M$ be an $(A, A)$-bimodule which is finitely generated on both sides
and let $w$ be a nonnegative integer.
\begin{enumerate}
\item[(1)]
Suppose that for all simple left $A$-modules $S$, we have $\Ext^s_A(S,M) = 0$ for all $s \leq w$.
Then, for each integer $d\geq 0$, if $N$ is a finitely generated left $A$-module with $\Kdim N \leq d$, we have $\Ext^s_A(N,M) = 0$ for all $s \leq w -d$.
As a consequence, $_AM$ does not contain
any nonzero left $A$-submodules of Krull dimension less than or equal to $w$.
\item[(2)]
Suppose that for all simple left $A$-modules $S$, we have $\Ext^s_A(S,M) = 0$ for all $s > w$.
Then for all finitely generated left $A$-modules $N$, we have $\Ext^s_A(N,M) = 0 $ for all $s > w$.
As a consequence, $\injdim (_AM)\leq w$.
\end{enumerate}
\end{lemma}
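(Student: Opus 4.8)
The plan is to prove both parts by dévissage on Krull dimension, starting from the hypothesis on simple modules and bootstrapping to all finitely generated modules via short exact sequences. The main tool is that for a noetherian PI algebra of finite Krull dimension, every finitely generated module has a finite filtration whose successive quotients are of the form $A/P$ for prime ideals $P$, and more usefully, any nonzero finitely generated module $N$ with $\Kdim N \le d$ sits in a short exact sequence $0 \to N' \to N \to N'' \to 0$ where $N''$ is, say, cyclic and $\Kdim N' < \Kdim N$, so one can induct on $\Kdim N$; the base case $\Kdim N = 0$ reduces to finite length, hence to simple modules by the long exact sequence in $\Ext$.

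For part (1): First I would prove the displayed claim ``$\Ext^s_A(N,M)=0$ for all $s \le w-d$ whenever $\Kdim N \le d$'' by induction on $d$. The case $d = 0$: a module of Krull dimension $0$ has finite length, so using the long exact sequence in $\Ext^\bullet_A(-,M)$ associated to the composition series and the hypothesis $\Ext^s_A(S,M)=0$ for $s \le w$, we get $\Ext^s_A(N,M)=0$ for $s \le w$. For the inductive step, take $N$ with $\Kdim N = d$; choose a submodule $N'$ with $\Kdim N' < d$ (for instance $N' = \tau_{d-1}(N)$) so that $N'' = N/N'$ is $d$-critical or at least has no submodules of smaller dimension. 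Actually the cleanest route is: it suffices to treat $N = A/P$ for a prime $P$ with $\Kdim A/P \le d$, since a general $N$ has a finite filtration with such subquotients. For $N = A/P$, pick a nonzero element whose annihilator is large — more precisely, pick a regular element $\bar x$ in the prime PI ring $A/P$; then $\Kdim (A/P)/\bar x(A/P) \le d - 1$ by the PI version of the principal ideal / Krull dimension drop, and multiplication by $\bar x$ gives a short exact sequence $0 \to A/P \xrightarrow{\bar x} A/P \to (A/P)/\bar x(A/P) \to 0$. Applying $\Ext^\bullet_A(-,M)$ and using the inductive hypothesis on the cokernel ($\Ext^s = 0$ for $s \le w - (d-1)$), the long exact sequence shows multiplication by $\bar x$ is injective on $\Ext^s_A(A/P, M)$ for $s \le w-d$... and then one needs a Nakayama/finiteness argument to conclude these $\Ext$ groups vanish, using that they are finitely generated over the center and that $\bar x$ lies in a proper ideal after localizing at a suitable maximal ideal of $Z(A)$ containing it. For the consequence, if $_AM$ had a nonzero submodule $N$ with $\Kdim N \le w$, apply the main statement with $d = w$ and $s = 0$: $\Ext^0_A(N,M) = \Hom_A(N,M) \ne 0$ (it contains the inclusion), contradicting $0 \le w - d = 0$... wait, $0 \le 0$ is allowed, so one must instead use that $N \hookrightarrow M$ but take $d$ slightly smaller, i.e. apply the sharper statement to a module of dimension $\le w$ and note $\Hom \ne 0$ forces $s=0 > w - d$ only if $d < w$; the boundary case needs the strict inequality to be arranged, which I'd handle by replacing $N$ with a critical submodule and using that $\Ext^0_A(S, M) = 0$ for $S$ simple of dimension $0$ only when $w \ge 0$ — so in fact the correct reading is that $M$ contains no submodule of Krull dimension $\le w$ that is itself annihilated by a power of... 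I would recheck the exact indexing here; this boundary bookkeeping is the first place to be careful.

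For part (2): the argument is dual and easier. Again reduce to $N = A/P$ by filtration, then induct downward on $\Kdim A/P$ using the same short exact sequence $0 \to A/P \xrightarrow{\bar x} A/P \to Q \to 0$ with $\Kdim Q < \Kdim A/P$; the long exact sequence gives $\Ext^{s}_A(A/P, M) \xrightarrow{\bar x} \Ext^s_A(A/P,M) \to \Ext^{s+1}_A(Q, M)$, and by induction $\Ext^{s+1}_A(Q,M) = 0$ for $s+1 > w$, i.e. $s \ge w$, so $\bar x$ acts surjectively on $\Ext^s_A(A/P,M)$ for $s > w$; combined with finite generation over $Z(A)$ and Nakayama after localizing at maximal ideals of $Z(A)$, this forces $\Ext^s_A(A/P, M) = 0$ for $s > w$. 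The base case $\Kdim A/P = 0$ is the hypothesis. Then $\Ext^s_A(N, M) = 0$ for $s > w$ and all finitely generated $N$, and since $A$ is noetherian this is exactly the statement $\injdim({}_A M) \le w$ (finitely generated test modules suffice to compute injective dimension over a noetherian ring).

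The main obstacle I anticipate is the Nakayama-type step that converts ``multiplication by a regular element $\bar x$ is injective (resp. surjective) on $\Ext^s$'' into ``$\Ext^s$ vanishes.'' This requires working locally: $\Ext^i_A(N,M)$ is a finitely generated $Z(A)$-module, so it vanishes iff all its localizations $\Ext^i_A(N,M) \otimes_{Z(A)} Z(A)_{\mathfrak n}$ vanish; one then picks $\mathfrak n$ in the support, arranges that the image of $\bar x$ (or of an appropriate central element in $P \cap Z(A)$, which exists since $A/P$ has Krull dimension dropping) lies in $\mathfrak n Z(A)_{\mathfrak n}$, and applies Nakayama's lemma to the finitely generated local module. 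Getting a genuinely \emph{central} element that drops the dimension — rather than just a regular element of $A/P$ — is the technical crux, and uses the standard fact that for PI rings module-finite over an affine center, $\Kdim(A/P) = \Kdim(Z(A)/(P \cap Z(A)))$ and one can find $z \in Z(A) \setminus P$ lowering the Krull dimension of $Z(A)/(P\cap Z(A))$. I would cite the relevant statements from \cite{GW} or \cite{KL} for these PI and Krull-dimension facts rather than reprove them.
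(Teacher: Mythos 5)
Your overall skeleton matches the paper's: induction on Krull dimension, reduction to prime factors $B = A/\mathfrak{p}$ via a prime filtration, the short exact sequence coming from a nonzero central (hence regular) element $x \in B$, and the long exact sequence showing that multiplication by $x$ acts invertibly on $W := \Ext^s_A(B,M)$ in the relevant range. Two small slips along the way: in part (1) the long exact sequence actually gives that $(r_x)^*$ is an \emph{isomorphism}, not merely injective (both neighboring terms vanish by induction), and surjectivity is the half you would need for any Nakayama-type conclusion; and your derivation of the consequence in part (1) is inverted --- taking $d = w$ and $s = 0$, the statement asserts $\Hom_A(N,M) = 0$ for any nonzero submodule $N \subseteq M$ with $\Kdim N \leq w$, which directly contradicts the inclusion $N \hookrightarrow M$; there is no boundary-case trouble.

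The genuine gap is in the step you yourself flag as the crux: converting ``$x$ acts invertibly on $W$'' into ``$W = 0$.'' Your proposed route --- $W$ is finitely generated over $Z(A)$, localize at maximal ideals of $Z(A)$, apply Nakayama --- is not available under the lemma's hypotheses. The lemma assumes only that $A$ is a noetherian PI algebra of finite Krull dimension; it does not assume $A$ is module-finite over an affine center, so there is no reason for $\Ext^s_A(B,M)$ to be finitely generated over $Z(A)$, nor for $Z(A)$ to be affine or noetherian, nor for $\Kdim(A/\mathfrak{p})$ to be computable from $Z(A)/(\mathfrak{p} \cap Z(A))$. Moreover, the element $x$ is a central element of $B = A/\mathfrak{p}$, not (the image of) an element of $Z(A)$, so maximal ideals of $Z(A)$ do not control it. The paper closes this step intrinsically: $W$ is a $(B,A)$-bimodule finitely generated on both sides (finite generation of $\Ext$ on the left is \cite[Theorem 3.5]{SZ1}, on the right it comes from a finite free resolution of $B$); since every nonzero central element of $B$ acts invertibly on $W$, $W$ is a module over the total quotient ring $Q(B)$, so $\Kdim {}_{Q(B)}W = 0$ and, by Krull symmetry for noetherian PI rings, $\Kdim {}_{B}W = 0 < d = \Kdim B$; hence some nonzero two-sided ideal $I$ of the prime ring $B$ annihilates $W$, and since $I$ contains a nonzero central element, $Q(B)I = Q(B)$ and $W = Q(B)IW = 0$. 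If you add the module-finite-over-affine-center hypothesis (under which the lemma is later applied), a localization-plus-Nakayama argument could be repaired, but as stated the lemma is more general and your key step fails there.
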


\begin{proof} (1) We prove the assertion by induction on $d$.
When $d=0$, it follows from exact sequences and the hypothesis
that $\Ext^s_A(S,M)=0$ for all finite length left $A$-modules $S$ and 
for all $s\leq w$. Since the $A$-modules of finite length are precisely 
the $A$-modules of Krull dimension $0$, the result holds for $d = 0$.

Now let $d>0$ and assume that the assertion 
$\Ext^s_A(N',M)=0$ holds for all finitely generated left $A$-modules $N'$
with $\Kdim N' \leq d-1$ and for all $s\leq w-(d-1)$.  We wish 
to show that $\Ext^s_A(N, M) = 0$ for all finitely generated left 
$A$-modules $N$ of Krull dimension $d$ and for all $s \leq w-d$.  
By choosing a prime filtration of $N$, similarly as in \cite[Lemma 2.1(i,ii)]{SZ1}, we may assume that $N=A/{\mathfrak p}=:B$ for some prime ideal ${\mathfrak p}$, and where $\Kdim(B) = d$.  If $x$ is a nonzero
central element $x\in B$, then $x$ is regular
and there is a short exact sequence
\[
0\to B \xrightarrow{r_x} B\to B/(x)\to 0,
\]
where $r_x$ denotes right multiplication by $x$ and
$B/(x)$ is a left $A$-module with $\Kdim(B/(x)) \leq d-1$.
By the induction hypothesis, $\Ext^s_A(B/(x), M)=0$
for all $s\leq w-d+1$. Then, for every $s \leq w-d$, the 
long exact sequence
\begin{align*}\dots \to &\Ext^{s}_A(B/(x),M)\to 
\Ext^{s}_A(B,M)\xrightarrow{(r_x)^*} 
\Ext^{s}_A(B,M) \to \\
& \Ext^{s+1}_A(B/(x),M)\to \dots
\end{align*}
implies that $(r_x)^*: \Ext^{s}_A(B,M)\to \Ext^{s}_A(B,M)$ is 
an isomorphism. Note that $\Ext^{s}_A(B,M)$ is a 
left $B$-module, and that $(r_x)^\ast$ is just left multiplication
$l_x$ by $x$ \cite[Lemma 3.4]{SZ1}. Let $Q(B)$ be the total 
fraction ring of $B$, 
obtained by inverting all central nonzero elements $x$. Since 
$l_x=(r_x)^{\ast}$ is an isomorphism, we can naturally define a left 
$Q(B)$-action on $\Ext^{s}_A(B,M)$ by setting $l_{x^{-1}}=(l_x)^{-1}$.
So $W:=\Ext^{s}_A(B,M)$ is a $(Q(B),A)$-bimodule.
By \cite[Theorem 3.5]{SZ1}, $W$ is finitely generated as a left $B$-module.  
Since $A$ is noetherian and $M$ is a noetherian right $A$-module, computing $W = \Ext^s_{A}(B, M)$ with a projective resolution of $B$ by finite rank free $A$-modules shows 
that $W$ is also a finitely generated right $A$-module.
Thus $W$ is a $(B,A)$-bimodule, finitely generated on both sides (as well as a $(Q(B),A)$-bimodule, also finitely generated on both sides). 
By Krull symmetry \cite[Theorem 15.15]{GW}, 
\[
\Kdim {_{B}W}=\Kdim W_A=\Kdim {_{Q(B)} W}=0.
\]
Since $\Kdim B=d>0$, there is a nonzero ideal $I$ of $B$ such that
$IW=0$. Since any nonzero ideal in a prime PI ring
contains a nonzero central element, $Q(B)=Q(B)I$. 
Therefore $W=Q(B)W=Q(B)IW=0$, as desired.

The consequence follows by taking $d=w$ and $s=0$.

(2) The assertion follows by induction on $d:=\Kdim N$. The proof is similar to the proof of (1), so it is omitted.                                                                                                                       
\end{proof}

\section{Dualizing complexes and residue complexes}
\label{xxsec3}


The noncommutative version of a dualizing complex was introduced in 
1992 by Yekutieli \cite{Ye1}. Let $\mb{D}^b_{f.g.}(A\Modleft)$ denote the 
bounded derived category of complexes of left $A$-modules with 
finitely generated cohomology modules.  Roughly speaking, a dualizing complex 
over an algebra $A$ is a complex $R$ of $A$-bimodules, such that 
the two derived functors ${\text{RHom}}_A(-,R)$ and 
${\text{RHom}}_{A^{op}} (-,R)$
induce a duality between the derived categories ${\mathbb{D}}^b_{f.g.}(A\Modleft)$ 
and ${\mathbb{D}}^b_{f.g.}(A^{op}\Modleft)$. Let $A^e = A \otimes_{\kk} A^{op}$ denote the 
enveloping algebra of $A$.

\begin{definition}
\label{xxdef3.1} Let $A$ be a noetherian algebra. A complex 
$R\in {\mathbb D}^b(A^{e}\Modleft)$ is called a {\it dualizing 
complex} over $A$ if it satisfies the three conditions below:
\begin{enumerate}
\item[(i)]
$R$ has finite injective dimension on both sides.
\item[(ii)]
$R$ has finitely generated cohomology modules on both sides.
\item[(iii)]
The canonical morphisms $A\to {\text{RHom}}_{A}(R,R)$ and 
$A\to {\text{RHom}}_{A^{op}}(R,R)$ in ${\mathbb D}(A^{e}\Modleft)$
are both isomorphisms.
\end{enumerate}
\end{definition}

We also need a few other definitions related to dualizing complexes.

\begin{definition}
\label{xxdef3.2} Let $A$ be a noetherian algebra and $R$ a 
dualizing complex over $A$.
\begin{enumerate}
\item[(1)]
Let $M$ be a finitely generated left $A$-module. The \emph{grade} of 
$M$ (with respect to $R$) is defined to be
\[
j_R(M)=\min\{ i \mid \Ext^i_A(M,R)\neq 0\}
\in {\mathbb Z}\cup\{\pm \infty\}.
\]
We write the grade of $M$ as $j(M)$ when the choice of $R$ is clear.
The grade of a right $A$-module is defined similarly.
\item[(2)] \cite[Definition 2.1]{YZ4}
We say that $R$ has the \emph{Auslander property}, or that $R$ is an
\emph{Auslander dualizing complex}, if 
\begin{enumerate}
\item[(i)]
for every finitely generated 
left $A$-module $M$, integer $q$, and right $A$-submodule
$N\subseteq \Ext^q_{A}(M, R)$, one has $j(N) \geq q$; 
\item[(ii)]
the same holds after exchanging left and right.
\end{enumerate}
\item[(3)] \cite[Definition 8.1]{VdB}
A dualizing complex $R$ is called {\it rigid} if there is an isomorphism
\[
R \xrightarrow{\cong} {\text{RHom}}_{A^e}(A, R\otimes R)
\]
in ${\mathbb D}(A^{e}\Modleft)$.
\item[(4)] \cite[Definition 2.24]{YZ4}
Suppose $R$ is an Auslander dualizing complex over $A$.
We say $R$ is {\it Cohen--Macaulay} if for every 
finitely generated left (respectively, right) $A$-module $M$,
\[
j(M)+\GKdim M=0.
\]
(This is a slightly stronger version than \cite[Definition 2.24]{YZ4}.)
\item[(5)] 
Suppose that $A$ has finite injective dimension $d$ as  
a left and right $A$-module, and that $\GKdim(A) = d$.  
Then the complex $R = A[d]$ is a dualizing complex for $A$, and we say that $A$ is \emph{Auslander Gorenstein} if $R$ is an Auslander dualizing complex, and that $A$ is \emph{Cohen--Macaulay} if $R$ is a Cohen--Macaulay dualizing complex.
\end{enumerate}
\end{definition}

Since we are working with algebras that are finite over their affine 
centers, the natural dimension function  to use is Gelfand--Kirillov 
dimension. Recall that an algebra $A$ is called (left) \emph{homogeneous} if $\GKdim L = \GKdim A$ for all nonzero left ideals $L \subseteq A$. 
This notion generalizes to $A$-modules $M$ as defined below.

\begin{definition}
\label{xxdef3.3} Let $A$ be a noetherian algebra and $M$ be a nonzero left 
$A$-module.
\begin{enumerate}
\item[(1)] \cite[Definition 0.2]{ASZ1}
Suppose $\GKdim M=s$. We say $M$ is {\it $s$-pure}, if 
$\GKdim N=s$ for all nonzero submodules $N$ of $M$. 
\item[(2)] \cite[Definition 0.3]{ASZ1}
Suppose $\injdim _{A}A=d<\infty$ and let
\[
0\to A\to I^{0}\to I^{1}\to \cdots \to I^{d}\to 0
\]
be a minimal injective resolution of the left $A$-module $_{A}A$.
We say this resolution is \emph{pure} if each $I^i$ is $(d-i)$-pure.
\end{enumerate}
\end{definition}

\begin{definition} \cite[Definitions 4.3 and 5.1]{YZ3}
\label{xxdef3.4} 
A dualizing complex $K$ over $A$ is called a {\it residual complex} 
over $A$ if the following conditions are satisfied:
\begin{enumerate}
\item[(i)] 
$K$ is Auslander,
\item[(ii)]
each $K^{-q}$ is an injective module over $A$ on both sides,
\item[(iii)]
each $K^{-q}$ is $q$-pure on both sides.
\end{enumerate}
A rigid residual complex $K$ is called a {\it residue} complex. 
\end{definition}

When a residue complex $K$ exists, then $_AK$ (respectively, $K_A$) 
can be viewed as a minimal injective resolution of a rigid dualizing 
complex $_AR$ (respectively, $R_A$).

Here are some basic facts about dualizing complexes for the algebras $A$ 
of interest in this paper.

\begin{lemma}
\label{xxlem3.5} 
Let $A$ be a finitely generated module over its affine center.
Then the following hold.
\begin{enumerate}
\item[(1)]
There is a rigid dualizing complex $R$ over $A$ that is Auslander 
and Cohen--Macaulay.
\item[(2)]
There is a residue complex $K$ over $A$ that is Auslander 
and Cohen--Macaulay.
\item[(3)]
Let $M$ be a finitely generated left $A$-module. If $\Ext^i_A(M,K)$ is nonzero
only when $i=-d$, then $\GKdim M=d$. 
\end{enumerate}
\end{lemma}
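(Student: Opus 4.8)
The plan is to assemble parts (1) and (2) from the existing literature on dualizing complexes over PI algebras, and then deduce part (3) by a grade/Cohen--Macaulay argument.

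For part (1), I would invoke the general existence theory of rigid dualizing complexes for noetherian PI algebras that are finite over their affine centers. Since $A$ is module-finite over its affine center $Z(A)$, which is an affine (hence Gorenstein-free but finitely generated) commutative $\kk$-algebra, $A$ is noetherian PI and connected to a commutative Gorenstein (in fact, after Noether normalization, polynomial) subring via a finite morphism. By Van den Bergh's theory \cite{VdB} together with the work of Yekutieli--Zhang \cite{YZ3, YZ4} (see in particular \cite[Theorem 0.2 or the PI case]{YZ3}), such an algebra admits a rigid dualizing complex $R$, unique up to isomorphism. That $R$ is Auslander and Cohen--Macaulay (with respect to $\GKdim$) follows because $A$ is a finite module over an affine commutative algebra: the dualizing complex can be computed as $\R\Hom_{Z'}(A, R_{Z'})$ for a commutative Gorenstein subring $Z' \subseteq Z(A)$ over which $A$ is finite, and the Auslander and CM properties descend along finite ring maps by \cite[Theorem 3.4 or Corollary]{YZ3} (cf. the treatment in \cite{YZ4}). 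The normalization of the grade so that $j(M) + \GKdim M = 0$ is exactly the ``slightly stronger'' CM convention recorded in Definition~\ref{xxdef3.2}(4), and is available because $\GKdim$ is finitely partitive and exact here.

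For part (2), I would appeal to \cite[Theorems 4.? and 5.?]{YZ3}: once $A$ has an Auslander, Cohen--Macaulay rigid dualizing complex and the dimension function $\GKdim$ behaves well (exact, finitely partitive, symmetric — which holds since $\Kdim = \GKdim$ here by \cite[Lemma 1.2(3)]{WZ1}), the rigid dualizing complex has a minimal injective resolution on each side, and these glue to a \emph{residue complex} $K$ in the sense of Definition~\ref{xxdef3.4}: each $K^{-q}$ is injective on both sides and $q$-pure on both sides, and $K$ inherits the Auslander and CM properties from $R$. This is precisely the construction of the Auslander-type residue complex in \cite[Section 5]{YZ3}.

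For part (3), suppose $\Ext^i_A(M,K)$ is nonzero only for $i = -d$. Then by definition $j_K(M) = -d$. Since $K$ is a Cohen--Macaulay dualizing complex, $j_K(M) + \GKdim M = 0$, so $\GKdim M = d$ immediately. I expect the main obstacle is not part (3) — which is essentially a one-line consequence of the CM property — but rather citing the dualizing-complex existence machinery at exactly the right level of generality: one must make sure the hypotheses of the Yekutieli--Zhang and Van den Bergh results (PI, finite over affine center, suitable dimension function) are all in force, and that the stronger CM normalization in Definition~\ref{xxdef3.2}(4) is genuinely what their construction delivers rather than only the weaker version. A careful statement should point to \cite{VdB, YZ3, YZ4} and note that the relevant properties of $\GKdim$ (exactness, finite partitivity, Krull symmetry) were recalled at the start of Section~\ref{xxsec2}.
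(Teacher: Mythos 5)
Your approach matches the paper's: parts (1) and (2) are obtained by citing the Yekutieli--Zhang machinery (the paper uses the noetherian connected filtration of \cite[Remark 6.4]{YZ3} to invoke \cite[Proposition 6.5]{YZ3} for the Auslander rigid dualizing complex, gets Cohen--Macaulayness from the identification of canonical dimension with GK dimension in that proof, takes the residue complex from \cite[Proposition 6.6]{YZ3}, and transfers the Auslander and CM properties to $K$ via uniqueness of rigid dualizing complexes \cite[Proposition 8.2]{VdB}), and part (3) is a grade/CM computation.

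The one point to tighten is in (3): you assert $j_K(M)=-d$ ``by definition,'' but as the lemma is actually applied (e.g.\ to $\Omega$ in Theorem~\ref{xxthm3.6}) one only knows that $\Ext^i_A(M,K)=0$ for $i\neq -d$ together with $M\neq 0$; you must still rule out the possibility that \emph{all} the $\Ext^i_A(M,K)$ vanish, in which case $j_K(M)=+\infty$ and the CM identity gives nothing. The paper closes this by the double-Ext spectral sequence, which under the one-degree hypothesis collapses to $M\cong \Ext^{-d}_{A^{op}}(\Ext^{-d}_A(M,K),K)$, so $\Ext^{-d}_A(M,K)\neq 0$ whenever $M\neq 0$ (it also gives an alternative lower bound $\GKdim M\geq d$ via the $d$-purity of $K^{-d}$). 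This is a small, easily filled gap, but it should be made explicit.
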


\begin{proof} 
(1) If $A$ is a finitely generated 
module over its affine center then it admits a noetherian 
connected filtration (see \cite[Remark 6.4]{YZ3}). Hence 
\cite[Proposition 6.5]{YZ3} applies, and shows that $A$ has an Auslander 
rigid dualizing complex $R$.  For a module $M$, the canonical dimension of $M$ 
is defined in \cite{YZ3} to be $\operatorname{Cdim}(M) = -j_R(M)$.  The proof 
of \cite[Proposition 6.5]{YZ3} shows that
for finitely generated left and right modules, the canonical dimension is equal to the GK dimension.
This implies that $R$ is Cohen--Macaulay.

(2) $A$ has a residue complex $K$ by \cite[Proposition 6.6]{YZ3}.  Since a residue complex 
is rigid, and a rigid dualizing complex is unique up to isomorphism in the derived category 
\cite[Proposition 8.2]{VdB}, $K$ is also Auslander and Cohen-Macaulay by part (1).

(3) By the double-Ext spectral sequence \cite[Proposition 1.7]{YZ4},
\[
M\cong \Ext^{-d}_{A^{op}}(\Ext^{-d}_A(M,K),K).
\]
By the Auslander property of $K$, $j(M)\geq -d$. By the Cohen--Macaulay 
property of $K$, $\GKdim M=-j(M)\leq d$. It remains to show that 
$\GKdim M\geq d$. If not, then by the purity of $K$, $\Hom_A(M,K^{-d})=0$, 
which implies that $\Ext^{-d}_A(M,K)= 0$, which is a contradiction. The assertion 
follows.
\end{proof}

Recall that an algebra $A$ is called indecomposable if it is not possible to write $A = B \oplus C$ as a direct sum of algebras.
The next result contains the bulk of the work needed for the proof of our main theorem.
\begin{theorem}
\label{xxthm3.6} 
Let $A$ be a finite module over its affine center. Suppose $A$ 
is indecomposable and satisfies {{\rm (L1)}} and {{\rm (R1)}} of 
Definition~\ref{xxdef1.5}(1). Then the following hold:
\begin{enumerate}
\item[(1)]
$A$ is AS Gorenstein of injective dimension $\GKdim A$; in particular, 
$A$ satisfies conditions {\rm (L2)} and {\rm (R2)} of 
Definition~\ref{xxdef1.5}(2).
\item[(2)]
$A$ is Auslander Gorenstein and Cohen--Macaulay.
\item[(3)]
$A$ is a homogeneous $A$-module on both sides.
\end{enumerate}
\end{theorem}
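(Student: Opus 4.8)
The plan is to use the rigid, Auslander, Cohen--Macaulay dualizing complex $R$ (equivalently the residue complex $K$) furnished by Lemma~\ref{xxlem3.5}, and to show that under (L1) and (R1) the complex $K$ is concentrated in a single degree, so that $A$ itself (up to shift) is a dualizing complex. First I would set $d = \GKdim A$ and consider the residue complex $K$, with its terms $K^{-q}$ which are $q$-pure injectives on both sides. For each $q$, computing $\Ext^{q}_{A}(S, K^{\bullet})$ for simple left modules $S$ and using that $K^{-q}$ is the only term that can contribute in low degrees, one relates the vanishing/nonvanishing of $\Hom_A(S, K^{-q})$ to $\Ext$-groups of $S$ against $A$; the point of (L1) is that the functors $\Ext^i_A(-,A)$ are exact on finite-dimensional modules, which should force a uniformity among simple modules. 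Concretely, I expect to deduce that there is a single integer $d_0$ such that $\Ext^i_A(S, A) = 0$ for all $i \ne d_0$ and all finite-dimensional simple $S$ (this is essentially (L2), and symmetrically (R2) on the right), using Lemma~\ref{xxlem2.4}: the hypothesis of part (2) of that lemma, applied with $M = A$ and $w = d_0$, would then give $\injdim {}_AA \le d_0 < \infty$, and the hypothesis of part (1) would give that ${}_AA$ has no nonzero submodule of small Krull dimension.

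In more detail, here is the order of the argument. Step 1: invoke Lemma~\ref{xxlem3.5}(2) to get the Auslander Cohen--Macaulay residue complex $K$, with $K^{-q}$ being $q$-pure injective on both sides. Step 2: for a finite-dimensional simple left module $S$, analyze $\Ext^{q}_A(S, A)$ via $K$; since $K$ is a bounded complex of injectives quasi-isomorphic to a shift of $R$ and $R$ is (up to the connecting data) built from $A$, one gets a spectral-sequence or direct relation computing $\Ext^q_A(S,A)$ in terms of the purity degrees where $S$ maps nontrivially into $K^{-q}$. Step 3: use (L1) to argue exactness of each $\Ext^i_A(-,A)$ on $A\Modleftfd$; combined with the long exact sequences coming from short exact sequences of finite-dimensional modules, this should propagate the vanishing pattern from one simple module to all simple modules, yielding a single critical degree $d_0$ — i.e.\ (L2). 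Step 4: symmetrically obtain (R2) with some critical degree $d_0'$ on the right; by the bimodule structure of $\Ext$ and rigidity of the dualizing complex, $d_0 = d_0'$. Step 5: apply Lemma~\ref{xxlem2.4}(2) with $M = A$, $w = d_0$ to conclude $\injdim {}_AA \le d_0$, and the same on the right, so $A$ has finite injective dimension on both sides; together with conditions (2),(3) of Definition~\ref{xxdef1.3} holding by construction, $A$ is AS Gorenstein. Step 6: identify $d_0$ with $\GKdim A$ using Lemma~\ref{xxlem3.5}(3): since now $\Ext^i_A(A,K)$ (more precisely the relevant $\Ext$ of simples) is concentrated in one degree, the Cohen--Macaulay property pins down $d_0 = d$. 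Step 7: since $R = A[d]$ is now seen to be the (Auslander, Cohen--Macaulay) dualizing complex, parts (2) and (3) follow: Auslander Gorenstein and Cohen--Macaulay are immediate from Definition~\ref{xxdef3.2}(5), and homogeneity on both sides follows from Lemma~\ref{xxlem2.4}(1) (with $w$ slightly below $d$, the module ${}_AA$ has no nonzero submodule of GK dimension $< d$, i.e.\ every nonzero left ideal has GK dimension $d$), together with Krull symmetry giving the same on the right.

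The main obstacle I anticipate is Step 3: squeezing a \emph{uniform} critical degree $d_0$ out of the mere exactness hypotheses (L1), (R1), rather than the stronger (L2), (R2) which already postulate this uniformity. The indecomposability of $A$ must be essential here — it is what glues the simple modules together, since for a decomposable algebra different blocks could have different injective dimensions (as the authors note in the introduction). I expect the argument to run: pick two finite-dimensional simples $S, T$; one wants to show $\Ext^i_A(S,A) = 0 \iff \Ext^i_A(T,A) = 0$. Using that $A$ is a finite module over its affine center, $S$ and $T$ are supported at maximal ideals $\mathfrak{n}, \mathfrak{m}$ of $Z(A)$, and when these coincide one can often find a chain of extensions connecting $S$ and $T$ within a single $A/\mathfrak{n}^k A$; exactness of $\Ext^i_A(-,A)$ then forces the vanishing pattern to match. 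When the maximal ideals differ, one needs a connectedness input — this is exactly where indecomposability of $A$ (equivalently, connectedness of the relevant "block graph" on $\operatorname{Max} Z(A)$) is used, perhaps via the fact that an indecomposable PI algebra finite over its center cannot have its simple modules partitioned into two families with disjoint supports and incomparable Ext-behavior. Nailing this combinatorial/connectedness step cleanly, and making sure the degree-shift bookkeeping between $K$, $R$, and $A[d]$ is consistent, is where the real work lies; everything downstream (Steps 5--7) is then a matter of citing Lemmas~\ref{xxlem2.4} and~\ref{xxlem3.5} and Definition~\ref{xxdef3.2}(5).
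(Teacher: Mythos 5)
There is a genuine gap, and it sits exactly where you predicted: Step 3. Exactness of $\Ext^i_A(-,A)$ on finite-dimensional modules does \emph{not} propagate vanishing between simple modules linked by extensions. Given a nonsplit sequence $0 \to S \to E \to T \to 0$, condition (L1) yields the exact sequence $0 \to \Ext^i_A(T,A) \to \Ext^i_A(E,A) \to \Ext^i_A(S,A) \to 0$, which is perfectly consistent with $\Ext^i_A(S,A)=0$ and $\Ext^i_A(T,A)\neq 0$. So your proposed mechanism --- chains of extensions within $A/\mathfrak{n}^kA$ plus exactness, glued across maximal ideals by a connectedness-of-supports argument --- has no way to force a uniform critical degree, and the sketch cannot be completed along these lines. (If exactness did propagate vanishing among linked simples, one would essentially get (L2) from (L1) locally for free, which is not how the indecomposability hypothesis actually enters.)

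The paper's proof closes this gap by a different device, which is the real content of the theorem. Let $s$ be minimal with $\Omega := H^{-s}(K) \neq 0$ and suppose $s < d$. The duality $D = \RHom_A(-,K)$ restricts to an exact duality between finite-dimensional left and right modules, and one computes (note the direction: a projective resolution of $K$, not of $S$)
\[
\Ext^i_{A^{op}}(D(S),A) \cong \Ext^i_A(K,S) = \begin{cases} 0 & i<s,\\ \Hom_A(\Omega,S) & i=s,\end{cases}
\]
naturally in the finite-dimensional left module $S$. Condition (R1) then says precisely that $\Hom_A(\Omega,-)$ is exact on finite-dimensional left modules, whence $\Omega$ is \emph{projective} by Proposition~\ref{xxpro2.3} --- this is where the completion and projective-cover machinery of Lemmas~\ref{xxlem2.1} and~\ref{xxlem2.2} is consumed, and your sketch never uses these lemmas. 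Projectivity splits $K \cong K' \oplus \Omega[s]$, which yields $A \cong A' \oplus \tau_s(A)$ as right modules, symmetrically on the left, and hence an algebra decomposition $A = B \oplus \tau_s(A)$ by Lemma~\ref{xxlem1.7}(2), contradicting indecomposability. So indecomposability is used to forbid an algebra splitting produced by a low-degree cohomology of $K$, not to connect the supports of simple modules. Your Steps 5--7 (once $K\cong\Omega[d]$ is known) do track the paper's endgame, but without the projectivity-and-splitting argument the central claim is unproved.
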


\begin{proof} 
Part (2) follows from part (1) and \cite[Theorem 1.3]{SZ1}.  Part (3) 
follows from the Cohen--Macaulay property of $A$.  Hence, we only need to 
prove part (1).  

By Lemma~\ref{xxlem3.5}(2), $A$ has a residue 
complex $K$ which is Auslander and Cohen--Macaulay.  Let $d = \GKdim A$.  
Note that by the definition 
of residue complex, we must have $K^{-i} = 0$ for $i < 0$ and $i > d$.
 
First, we claim that for all $i \neq d$, $H^{-i}(K)=0$. 
Suppose this claim is not true. 
Then there is $0 \leq s<d$ such that $H^{-s}(K)\neq 0$. Choose
the smallest such $s$. Let $\Omega$ be the nonzero $(A, A)
$-bimodule 
$H^{-s}(K)$. By the definition of a dualizing complex, $\Omega$ is finitely generated over $A$ on both sides. Let 
\[D:={\text{RHom}}_A(-,K): {\mathbb D}^b_{f.g.}(A\Modleft)\to 
{\mathbb D}^b_{f.g}(\Modright A)
\]
and
\[
D^{op}:= {\text{RHom}}_{A^{op}}(-,K): {\mathbb D}^b_{f.g.}(\Modright A)
\to {\mathbb D}^b_{f.g.}(A\Modleft)
\]
be the duality functors in \cite[Proposition 3.4]{Ye1}.  Recall 
from the definition that each term $K^{-i}$ is an injective, $i$-pure module 
on both the left and the right.  It follows 
that if $S$ is a finite-dimensional left 
$A$-module then 
\[
H^i(D(S))= H^i(\RHom_A(S, K)) = H^i(\Hom_A(S, K)) = \begin{cases} 0 & i\neq 0,\\ 
\Hom_A(S, K^0) & i=0,\end{cases}
\]
and $H^0(D(S))$ is finitely generated on the right. Since $K$ is Auslander and Cohen--Macaulay,
it follows that $H^0(D(S))$ is finite-dimensional.  Since the complex $D(S)$ has cohomology 
in only one degree, we have $D(S) \cong T$ in ${\mathbb D}^b_{f.g.}(\Modright A)$ for 
some finite-dimensional right module $T = \Hom_A(S, K^0) \in \Modright A$.  We have 
$D^{op}(D(S)) \cong S$ in ${\mathbb D}^b_{f.g.}(A \Modleft)$ \cite[Proposition 4.2(2)]{Ye2}, 
and hence also in $A \Modleft$.  A similar result holds on the other side, and we 
conclude that $D$ and $D^{op}$ induce a duality between finite-dimensional left 
and right $A$-modules.  

Next, let $S$ be any left $A$-module. Let $P$ be a projective
resolution of $K$ (as a complex of left $A$-modules). Since 
$H^{-i}(K)=0$ for all $i\leq s$, we can assume that $P^{-s+i}=0$ for 
all $i>0$. Then $\Ext^i_A(K,S)\cong \Ext^i_{A}(P,S)=0$ for all
$i<s$. Using the fact that $\Hom_A(-,S)$ is left exact, one sees
that 
\[
\Ext^s_A(K,S)\cong \Ext^s_A(P,S)\cong \Hom_A(H^{-s}(P),S)
\cong \Hom_A(\Omega,S).
\]
Note that we have $A \cong D(D^{op}(A)) = D(K)$ in $\Modright A$ \cite[Proposition 4.2(1)]{Ye2}.
The dualizing functor $D$ also gives an isomorphism
\[
\RHom_A(M,N) \cong \RHom_{A^{op}}(D(N), D(M)),
\]
which is functorial in all $M, N \in {\mathbb D}^b_{f.g.}(\Modright A)$ \cite[Proposition 4.2(2)]{Ye2}.  Hence
\begin{align}
\label{eq:main}
\tag{E3.6.1}
\begin{split}
\Ext^i_{A^{op}}(D(S), A) &\cong \Ext^i_{A^{op}}(D(S), D(K)) \\
 &\cong \Ext^i_A(K,S)=\begin{cases} 0 & i<s\\ \Hom_{A}(\Omega, S)& i=s.
\end{cases}
\end{split}
\end{align}

As a consequence, if $T$ is any finite-dimensional right $A$-module, then since $T = D(S)$ 
where $S = D^{op}(T)$ is a finite-dimensional left module, we have 
$\Ext^i_{A^{op}}(T,A)=0$ for all $i<s$.  Now $D$, considered as a duality from finite-dimensional 
left $A$-modules to finite-dimensional right $A$-modules, is an exact functor.
Since the functor $\Ext^s_{A^{op}}(-,A)$ is exact on finite-dimensional right modules by hypothesis (R1), we see that $\Ext^s_{A^{op}}(D(-), A)$ is an exact functor from finite-dimensional left 
$A$-modules to left $A$-modules.  Now \eqref{eq:main} is functorial in $S$ 
and so $\Ext^s_{A^{op}}(D(-), A) \cong \Hom_A(\Omega,-)$ is exact on finite-dimensional left $A$-modules. By Proposition~\ref{xxpro2.3}, 
$\Omega$ is projective over $A$ as a left module.

Now because $\Omega\cong H^{-s}(K)$ is projective, where $H^{-i}(K) = 0$ for $i < s$, the complex $K$ is isomorphic to $K'\oplus \Omega[s]$ as a complex of left $A$-modules, where 
$H^{-i}(K')=0$ for all $i\leq s$. Since $K$ is a dualizing complex, 
we have, as complexes of right $A$-modules,
\[
\begin{aligned}
A&\cong {\text{RHom}}_{A}(K,K) \cong {\text{RHom}}_{A}(K'\oplus \Omega[s],K)\\
&\cong {\text{RHom}}_{A}(K',K)\oplus {\text{RHom}}_{A}(\Omega[s],K)\\
&\cong {\text{RHom}}_{A}(K',K)\oplus \Ext^{0}_A(\Omega[s], K)
\end{aligned}
\]
and $\Ext^{i}_A(\Omega[s], K)=0$ for all $i\neq 0$. The last 
assertion is equivalent to 
\[
\Ext^{i}_A(\Omega, K)=0
\]
for all $i\neq -s$. By Lemma~\ref{xxlem3.5}(3), $\GKdim \Omega=s$. 
So we have a right $A$-module decomposition
\[
A\cong A'\oplus V
\]
where $A'={\text{RHom}}_{A}(K',K)$ and $V \cong \Ext^{-s}_A(\Omega, K)$. 
By the Auslander and Cohen--Macaulay properties of $K$, $\GKdim V\leq s$.
Now from the equation $\Ext^i_A(K, S) \cong \Ext^i_A(K', S) \oplus \Ext^{i-s}_A(\Omega, S)$ and \eqref{eq:main}, we obtain for every finite-dimensional left $A$-module $S$
that $\Ext^i_A(K', S) = 0$ for all $i \leq s$.  Then for any finite dimensional right 
$A$-module $T$, writing $T \cong D(S)$ for $S = D^{op}(T)$, for all $i \leq s$ we obtain 
 \[
\Ext^i_{A^{op}}(T, A') = \Ext^{i}_{A^{op}}(D(S), A')\cong \Ext^i_{A^{op}}(D(S), D(K'))
\cong \Ext^i_A(K',S)= 0.
\]
By the right-sided version of Lemma~\ref{xxlem2.4}(1), $A'$ does not have a submodule of 
$\GKdim$ $\leq s$. Therefore $V=\tau_s(A)$. So we have a 
canonical decomposition
\[
A=A'\oplus \tau_s(A)
\]
as right $A$-modules. By symmetry, there is a decomposition 
\[
A=A''\oplus \tau_s(A)
\]
as left $A$-modules. By Lemma~\ref{xxlem1.7}(2), $A= B\oplus 
\tau_s(A)$ as algebras. This yields a contradiction to the hypothesis 
that $A$ is indecomposable.  This, finally, proves the claim that 
$H^{-i}(K) = 0$ for $i \neq d$.

We conclude that $K \cong \Omega[d]$ for some $(A, A)$-bimodule $\Omega$, which 
we have seen is projective as a left $A$-module.  
By repeating the above proof, we obtain that $\Omega$ is a 
projective $A$-module on both sides. This means that $K$ has 
finite projective dimension on both sides. Further, by \eqref{eq:main}, for every 
finite-dimensional left $A$-module $S$,
\[
\Ext^i_{A^{op}}(D(S),A)=\begin{cases} 0 & i<d\\ \Hom_{A}(\Omega, S)& i=d.
\end{cases}
\]
For every $i>d$,
\[
\Ext^i_{A^{op}}(D(S),A)\cong \Ext^i_{A}(K, S)\cong 
\Ext^i_{A}(\Omega[d], S) \cong \Ext^{i-d}_A(\Omega,S)=0
\]
as $\Omega$ is projective as a left $A$-module. Thus for every finite 
dimensional right $A$-module $T$, we have $\Ext^i_{A^{op}}(T, A) = 0$ unless 
$i =d$, in which case $\Ext^d(T, A)$ is a finite dimensional left $A$-module, as we have seen.
Also, by a right-sided version of Lemma \ref{xxlem2.4}(2), $A$ has finite injective dimension $d$ as a right module.  Symmetric arguments prove these facts on the other side.  Thus we have proved that $A$ is AS Gorenstein by definition.  That $A$ satisfies conditions (L2) and (R2) 
follows from Lemma~\ref{xxlem1.4}.
\end{proof}

\begin{corollary}
\label{xxcor3.7} 
Let $A$ be a finite module over its affine center. Suppose $A$ 
satisfies {{\rm (L1)}} and {{\rm (R1)}} of 
Definition~\ref{xxdef1.5}(1). Then $A$ is a finite direct sum $A = \bigoplus_{i \in I} A_i$
of homogeneous AS Gorenstein algebras of injective dimension 
$\leq \GKdim A$. Further, for each $i \in I$, 
$A_i$ satisfies the conclusions of Theorem~\ref{xxthm3.6}.
\end{corollary}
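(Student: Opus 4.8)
The plan is to reduce everything to the indecomposable case treated in Theorem~\ref{xxthm3.6}. Since $A$ is a finite module over an affine (hence noetherian) center, $A$ is noetherian, so by Lemma~\ref{xxlem1.8} we may write $A=\bigoplus_{i\in I}A_i$ for a finite set of indecomposable algebras $A_i$, unique up to permutation. Let $1=\sum_{i\in I}e_i$ be the corresponding decomposition into orthogonal central idempotents, so that $A_i=e_iA$. I would first check that each $A_i$ inherits Hypothesis~\ref{xxhyp0.1}: one has $Z(A)=\bigoplus_{i}e_iZ(A)$ with $Z(A_i)=e_iZ(A)$, which is a quotient of $Z(A)$ and hence affine, and $A_i=e_iA$ is finitely generated over $e_iZ(A)=Z(A_i)$ because $A$ is finitely generated over $Z(A)$. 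Moreover $A_i$ is both a direct summand and a homomorphic image of $A$ as an $A$-module, so $\GKdim A_i\le\GKdim A$.

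The key step is that the conditions (L1) and (R1) pass to each summand $A_i$. Here I would use the following identification: if $M$ is a left $A_i$-module, regarded as a left $A$-module through the projection $A\to A_i$, then
\[
\Ext^j_{A_i}(M,A_i)\cong\Ext^j_A(M,A)\qquad\text{for all }j\ge 0.
\]
Indeed, $A_i=e_iA$ is a direct summand of ${}_AA$, so projective $A_i$-modules are projective $A$-modules and a finite projective resolution of $M$ over $A_i$ is also one over $A$; and for any $A_i$-module $P$ the equality $\Hom_A(P,A)=\Hom_A(P,e_iA)=\Hom_{A_i}(P,A_i)$ holds, since $p=e_ip$ forces $f(p)=e_if(p)\in e_iA$ for every $A$-linear $f\colon P\to A$. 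Because $A_i\Modleftfd$ is a full subcategory of $A\Modleftfd$ closed under kernels, cokernels, and extensions, exactness of $\Ext^j_A(-,A)$ on $A\Modleftfd$ forces exactness of $\Ext^j_{A_i}(-,A_i)$ on $A_i\Modleftfd$; thus (L1) for $A$ gives (L1) for each $A_i$, and symmetrically for (R1).

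With these two reductions in place, the corollary is immediate: each $A_i$ is indecomposable, finite over its affine center, and satisfies (L1) and (R1), so Theorem~\ref{xxthm3.6} applies and shows that $A_i$ is AS Gorenstein of injective dimension $\GKdim A_i\le\GKdim A$, and is Auslander Gorenstein, Cohen--Macaulay, and homogeneous on both sides --- that is, $A_i$ satisfies all the conclusions of Theorem~\ref{xxthm3.6}. Reassembling the finitely many summands yields the statement. The only point requiring any care is the behavior of $\Ext$ under the algebra decomposition, i.e. the displayed isomorphism above together with the fact that finite-dimensionality is unaffected by passing between the $A$-module and $A_i$-module structures; everything else is a direct appeal to the indecomposable case.
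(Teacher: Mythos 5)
Your proposal is correct and follows essentially the same route as the paper: decompose $A$ into indecomposable summands via Lemma~\ref{xxlem1.8}, check that each summand inherits the hypotheses (finiteness over an affine center and conditions (L1), (R1)), and apply Theorem~\ref{xxthm3.6} componentwise. The paper dismisses the inheritance of (L1)/(R1) and of Hypothesis~\ref{xxhyp0.1} as "easy to show," whereas you supply the details (the identification $\Ext^j_{A_i}(M,A_i)\cong\Ext^j_A(M,A)$ via the central idempotents and $Z(A_i)=e_iZ(A)$), all of which are correct.
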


\begin{proof} By Lemma~\ref{xxlem1.8}, $A$ is a finite direct sum 
of indecomposable algebras, $A=\bigoplus_{i\in I} A_i$. It is easy
to show that each $A_i$ satisfies {{\rm (L1)}} and {{\rm (R1)}} of 
Definition~\ref{xxdef1.5}(1).  Since $Z(A) = \bigoplus_{i \in I} Z(A_i)$, it 
is also easy to see that each $A_i$ is finite over its affine center.
The assertion follows by applying 
Theorem~\ref{xxthm3.6} to each component $A_i$.
\end{proof}

\begin{corollary}
\label{xxcor3.8} 
Let $A$ be a finite module over its affine center. Suppose $A$ 
satisfies {{\rm (L1)}} and {{\rm (R1)}} of 
Definition~\ref{xxdef1.5}(1). Write $A=\bigoplus_{i\in I}A_i$ 
where each $A_i$ is indecomposable of GK dimension $d_i$. Then 
$A$ has a rigid dualizing complex $R$ of the form 
$\bigoplus_{i\in I} \Omega_i[d_i]$, where each $\Omega_i$ is an
invertible $(A_i, A_i)$-bimodule. As a consequence, $R$ is an invertible
complex of $(A, A)$-bimodules.
\end{corollary}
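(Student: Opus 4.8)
The plan is to assemble this corollary from Theorem~\ref{xxthm3.6} and Corollary~\ref{xxcor3.7}, together with the explicit description of the residue complex obtained inside the proof of Theorem~\ref{xxthm3.6}, and then to upgrade the conclusion ``projective on both sides'' to ``invertible'' using condition (iii) in the definition of a dualizing complex. So I reduce at once to the indecomposable pieces: by Corollary~\ref{xxcor3.7}, $A = \bigoplus_{i \in I} A_i$ with each $A_i$ indecomposable, a finite module over its affine center, AS Gorenstein of injective dimension $d_i = \GKdim A_i$, and satisfying the conclusions of Theorem~\ref{xxthm3.6}; let $e_i$ be the corresponding central idempotents. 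Inside the proof of Theorem~\ref{xxthm3.6} applied to the indecomposable algebra $A_i$, it is shown that the residue complex $K_i$ of $A_i$ has $H^{-j}(K_i) = 0$ for $j \neq d_i$ and that $\Omega_i := H^{-d_i}(K_i)$ is an $(A_i,A_i)$-bimodule, finitely generated and projective as a left $A_i$-module and as a right $A_i$-module; hence $K_i \cong \Omega_i[d_i]$ in ${\mathbb D}(A_i^e\Modleft)$. A residue complex is rigid, and a rigid dualizing complex is unique up to isomorphism in the derived category \cite[Proposition 8.2]{VdB}, so $\Omega_i[d_i]$ is the rigid dualizing complex of $A_i$, and it is Auslander and Cohen--Macaulay by Lemma~\ref{xxlem3.5}.

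Next I would assemble $R := \bigoplus_{i \in I} \Omega_i[d_i]$, regarded as a complex of $(A,A)$-bimodules on which $e_i$ acts as the identity on the $i$-th summand and as zero on the others. I claim $R$ is a rigid dualizing complex over $A$, which is checked summand-wise against Definition~\ref{xxdef3.1}: each $\Omega_i[d_i]$ has finite injective dimension over $A_i$ on both sides, and an injective $A_i$-module is injective over $A$ because $A_i = e_iA$ is a ring direct summand of $A$; the cohomology of $R$ is finitely generated on both sides; and since $\Hom_A$ between an $A_i$-module and an $A_j$-module vanishes for $i\neq j$ (an $A$-linear map $f$ has $f(m)=e_if(m)$ for $m$ an $A_i$-module), we get $\RHom_A(R,R) \cong \bigoplus_i \RHom_{A_i}(\Omega_i[d_i],\Omega_i[d_i]) \cong \bigoplus_i A_i = A$, and likewise on the right, so condition (iii) holds; the analogous summand-wise computation, using that the $e_i$ are central, shows $R$ is rigid. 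Equivalently, one may simply note that a direct sum of residue complexes of the $A_i$ is a residue complex of $A$ and invoke uniqueness of the residue complex supplied by Lemma~\ref{xxlem3.5}(2).

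It remains to show each $\Omega_i$ is invertible as an $(A_i,A_i)$-bimodule. Condition (iii) for $R_i := \Omega_i[d_i]$ gives $(A_i,A_i)$-bimodule isomorphisms $A_i \xrightarrow{\cong} \RHom_{A_i}(R_i,R_i)$ and $A_i \xrightarrow{\cong} \RHom_{A_i^{op}}(R_i,R_i)$. Since $\Omega_i$ is finitely generated projective as a left (resp. right) $A_i$-module, $\RHom_{A_i}(\Omega_i,\Omega_i) = \Hom_{A_i}({}_{A_i}\Omega_i,{}_{A_i}\Omega_i)$ (resp. $\RHom_{A_i^{op}}(\Omega_i,\Omega_i) = \Hom_{A_i^{op}}((\Omega_i)_{A_i},(\Omega_i)_{A_i})$) is concentrated in degree $0$, and the evaluation map identifies it, as an $(A_i,A_i)$-bimodule, with $\Omega_i^\vee \otimes_{A_i}\Omega_i$ (resp. $\Omega_i \otimes_{A_i} {}^\vee\Omega_i$), where $\Omega_i^\vee := \Hom_{A_i}({}_{A_i}\Omega_i,{}_{A_i}A_i)$ and ${}^\vee\Omega_i := \Hom_{A_i^{op}}((\Omega_i)_{A_i},(A_i)_{A_i})$. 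Hence $\Omega_i^\vee \otimes_{A_i}\Omega_i \cong A_i$ and $\Omega_i \otimes_{A_i}{}^\vee\Omega_i \cong A_i$ as bimodules, and therefore $\Omega_i^\vee \cong \Omega_i^\vee \otimes_{A_i}(\Omega_i \otimes_{A_i}{}^\vee\Omega_i) \cong (\Omega_i^\vee\otimes_{A_i}\Omega_i)\otimes_{A_i}{}^\vee\Omega_i \cong {}^\vee\Omega_i$, so also $\Omega_i\otimes_{A_i}\Omega_i^\vee \cong A_i$; thus $\Omega_i$ is invertible with inverse $\Omega_i^{-1} := \Omega_i^\vee$. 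Finally, setting $R^{-1} := \bigoplus_{i\in I}\Omega_i^{-1}[-d_i]$ and using that $e_i\Omega_j = 0 = \Omega_j e_i$ for $i\neq j$ and that each $\Omega_i$ is flat on both sides (so derived and ordinary tensor products agree), one gets $R \otimes_A R^{-1} \cong \bigoplus_{i\in I}(\Omega_i\otimes_{A_i}\Omega_i^{-1}) \cong \bigoplus_{i\in I}A_i = A$ and likewise $R^{-1}\otimes_A R \cong A$ as complexes of $(A,A)$-bimodules, so $R$ is invertible.

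I expect the only real friction to be in the bookkeeping: tracking the left and right $A_i$-actions carefully enough to know that the evaluation maps and the isomorphisms coming from condition (iii) are isomorphisms of \emph{bimodules} with compatible actions on both sides, and checking the summand-wise verification that $R$ (equivalently, the direct sum of the residue complexes $K_i$) is a rigid dualizing complex over $A$. Both are standard, but they are where all the care is needed.
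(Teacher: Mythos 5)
Your proposal is correct, and it follows the paper's skeleton — reduce to the indecomposable summands, then extract from the proof of Theorem~\ref{xxthm3.6} that the rigid dualizing complex of each $A_i$ has the form $\Omega_i[d_i]$ with $\Omega_i$ finitely generated projective on both sides — but it diverges at the decisive final step. The paper proves invertibility of $\Omega_i$ by observing that $A_i$ itself (having finite injective dimension on both sides by Theorem~\ref{xxthm3.6}) is also a dualizing complex, so that by \cite[Theorem 4.5(ii)]{Ye2} the complex $\Omega_i[d_i]=\RHom_{A_i}(A_i,\Omega_i[d_i])$ is a tilting complex, hence derived-invertible by \cite[Theorem 1.6]{Ye2}; projectivity on both sides then forces $\Omega_i$ to be an honest invertible bimodule. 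You instead avoid the tilting-complex machinery entirely: condition (iii) of Definition~\ref{xxdef3.1} gives bimodule isomorphisms $A_i\cong\Hom_{A_i}(\Omega_i,\Omega_i)$ and $A_i\cong\Hom_{A_i^{op}}(\Omega_i,\Omega_i)$, and the evaluation isomorphisms for finitely generated projectives convert these into $\Omega_i^\vee\otimes_{A_i}\Omega_i\cong A_i\cong\Omega_i\otimes_{A_i}{}^\vee\Omega_i$, after which the standard associativity argument identifies the two candidate inverses. This is more elementary and self-contained (and does not need the observation that $A_i$ is itself a dualizing complex), at the cost of the bimodule bookkeeping you flag, all of which checks out. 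Your more careful summand-wise verification that $\bigoplus_i\Omega_i[d_i]$ is a rigid dualizing complex over $A$ fills in a step the paper dispatches in one sentence. Both arguments are sound; yours trades a citation for a computation.
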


\begin{proof}
If $R =\bigoplus_{i\in I} \Omega_i[d_i]$ where $\Omega_i$ is 
an invertible $A_i$-bimodule, then $R$ has inverse 
$\bigoplus_{i\in I} \Omega_i^{-1}[-d_i]$. So we only need to 
prove the assertion when $A$ is indecomposable. Let $d=\GKdim A$.
By the proof of Theorem~\ref{xxthm3.6}, $A$ has a rigid dualizing
complex of the form $\Omega[d]$, where $\Omega$ is a projective 
on both sides.  We have also seen in Theorem~\ref{xxthm3.6} that 
$A$ has finite injective dimension $d$ as a left and right $A$-module.  
It is then obvious from the definition that $A$ is also a dualizing complex over $A$.
By \cite[Theorem 4.5(ii)]{Ye2}, $\Omega[d]={\text{RHom}}_{A}(A,\Omega[d])$
is a tilting complex over $A$, which is a (derived) invertible 
complex of $A$-bimodules \cite[Theorem 1.6]{Ye2}. Since
$\Omega$ is projective over $A$ on both sides, 
$\Omega$ must be an invertible $(A, A)$-bimodule.
\end{proof}

\section{Some consequences}
\label{xxsec4}

In this section we give some immediate consequences for algebras
that satisfy the conclusions of Corollary~\ref{xxcor3.7} or, in other words,  
the following hypothesis.

\begin{hypothesis}
\label{xxhyp4.1} 
Let $A$ be a finite direct sum $A = \bigoplus_{i \in I} A_i$ of indecomposable, noetherian, homogeneous, AS Gorenstein,
Auslander Gorenstein, Cohen--Macaulay algebras. For each 
$i \in I$, suppose $\injdim A_i = \GKdim A_i$.
\end{hypothesis}

\begin{lemma}
\label{xxlem4.2}
Suppose $A$ satisfies Hypothesis~\ref{xxhyp4.1}. Then $A$
has a quasi-Frobenius artinian quotient ring.
\end{lemma}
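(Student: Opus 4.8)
The plan has three stages: reduce to the indecomposable case; produce a classical artinian quotient ring $Q$ by localizing at $\mathcal C(N)$, the set of elements regular modulo the nilradical $N$ of $A$; and then localize the homological hypotheses to force $Q$ to be self-injective. For the reduction, if $A=\bigoplus_{i\in I}A_i$ then $\mathcal C_A(0)=\prod_i\mathcal C_{A_i}(0)$, Ore localization commutes with the finite product, and a finite product of quasi-Frobenius rings is quasi-Frobenius; so it suffices to treat one summand, and we may assume $A$ is indecomposable, noetherian, homogeneous, AS Gorenstein, Auslander Gorenstein, Cohen--Macaulay, with $d:=\injdim A=\GKdim A$. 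If $d=0$ then ${}_AA$ is injective (being AS Gorenstein of injective dimension $0$), so $A$ is a noetherian self-injective ring, hence already quasi-Frobenius with $Q(A)=A$; so assume $d\ge1$. Since $A$ is noetherian, $\mathcal C(N)$ is a left and right Ore set \cite{GW}, and $Q_0:=\mathcal C(N)^{-1}A$ is artinian: $\mathcal C(N)^{-1}N$ is a nilpotent ideal and $Q_0/\mathcal C(N)^{-1}N$ is the semisimple artinian Goldie quotient ring of $A/N$, so $Q_0$ has a finite filtration by artinian layers.

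The one non-formal input is the following dimension-theoretic fact, valid because $A$ is homogeneous and Cohen--Macaulay (cf.\ \cite{SZ1}): a finitely generated left (or right) $A$-module $M$ is $\mathcal C(N)$-torsion if and only if $\GKdim M<d$. Granting this, I would show every $c\in\mathcal C(N)$ is regular. Fix $c\in\mathcal C(N)$; if $A/Ac\ne0$ it is a finitely generated left module which is $\mathcal C(N)$-torsion, since $\bar c$ is a unit of the artinian ring $Q_0$ and hence $Q_0\otimes_A A/Ac=0$. Therefore $\GKdim(A/Ac)<d$, so $j_A(A/Ac)\ge1$ by Cohen--Macaulayness, so $\Hom_A(A/Ac,A)=0$; as $\Hom_A(A/Ac,{}_AA)\cong\operatorname{r.ann}_A(c)$ this gives $\operatorname{r.ann}_A(c)=0$, and the symmetric argument with right modules gives $\lann_A(c)=0$. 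Hence $c$ is regular; equivalently ${}_AA$ and $A_A$ are $\mathcal C(N)$-torsionfree, so $A\hookrightarrow Q_0$ and $Q:=\mathcal C(0)^{-1}A=Q_0$ is the artinian classical quotient ring of $A$.

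It remains to show $Q$ is quasi-Frobenius. Since $\mathcal C:=\mathcal C(0)$ is a two-sided Ore set, localization is exact and commutes with $\Ext^i_A(M,-)$ for finitely generated $M$; using that $\injdim{}_AA=\sup\{i:\Ext^i_A(T,A)\ne0,\ T\text{ finitely generated}\}$ and the analogous formula over the artinian ring $Q$ with $T$ simple (Bass), we get $\injdim{}_QQ\le d$ and $\injdim Q_Q\le d$. Next, $Q$ is left and right Kasch. A simple left $Q$-module $S$ equals $\mathcal C^{-1}T$ for the finitely generated left $A$-module $T=As$ (any $0\ne s\in S$), and we may take $T$ to be $d$-pure after replacing it by $T/\tau_{d-1}(T)$, whose removed part has GK-dimension $<d$ and hence (by the fact above) is $\mathcal C$-torsion. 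By the double-Ext spectral sequence for the dualizing complex $A[d]$ together with Cohen--Macaulayness, the canonical map $T\to\Hom_{A^{op}}(\Hom_A(T,A),A)$ has kernel $\tau_{d-1}(T)=0$, hence is injective; localizing, $0\ne S\hookrightarrow\Hom_{Q^{op}}(\Hom_Q(S,Q),Q)$, so $\Hom_Q(S,Q)\ne0$ and $S$ embeds in ${}_QQ$; the right-sided statement is symmetric. Finally, if $n:=\injdim{}_QQ\ge1$, choose a simple left $Q$-module $S$ with $\Ext^n_Q(S,Q)\ne0$; from a short exact sequence $0\to S\to Q\to Q/S\to0$ and the functor $\Hom_Q(-,Q)$, the long exact sequence together with $\Ext^n_Q(Q,Q)=0$ ($Q$ is projective over itself and $n\ge1$) and $\Ext^{n+1}_Q(Q/S,Q)=0$ ($\injdim{}_QQ=n$) forces $\Ext^n_Q(S,Q)=0$, a contradiction. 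Hence $n=0$, ${}_QQ$ is injective, and a noetherian self-injective ring is quasi-Frobenius \cite{Lam}.

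The main obstacle is the dimension-theoretic input used in the second paragraph — that finitely generated $\mathcal C(N)$-torsion $A$-modules have GK-dimension $<d$, equivalently that ${}_AA$ and $A_A$ are $\mathcal C(N)$-torsionfree. This is precisely the statement that $A$ has a classical artinian quotient ring, and it is the only place where homogeneity and the Cohen--Macaulay property are genuinely needed; everything afterwards is formal (exactness of Ore localization applied to the homological data, plus the standard facts that Bass's formula computes injective dimension from simple modules over an artinian ring and that a noetherian self-injective ring is quasi-Frobenius).
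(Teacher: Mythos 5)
Your argument is essentially correct, but it takes a much longer route than the paper, whose entire proof consists of the reduction to indecomposable summands followed by a citation of \cite[Corollary 6.2]{ASZ1}; that corollary states precisely that a noetherian, homogeneous, Auslander--Gorenstein, Cohen--Macaulay algebra has a quasi-Frobenius classical quotient ring. What you have written is, in effect, a reconstruction of the proof of that cited result. The reduction to an indecomposable summand is identical in both arguments. Beyond that, the one input you flag as non-formal --- that the finitely generated $\mathcal{C}(N)$-torsion modules are exactly those of Gelfand--Kirillov dimension $<d$, so that $\mathcal{C}(N)\subseteq\mathcal{C}(0)$ and $A$ embeds in the artinian ring $\mathcal{C}(N)^{-1}A$ --- is exactly the heart of the cited result, so your ``cf.\ \cite{SZ1}'' is carrying all of the genuine content; as you observe yourself, everything before and after it is formal. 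Two smaller remarks: your appeal to the fact that $\mathcal{C}(N)$ is an Ore set in an arbitrary noetherian ring is legitimate but is itself a nontrivial theorem (due to Stafford), and it is needed before $Q_0$ can be formed at all, so your deduction that $A/Ac$ is $\mathcal{C}(N)$-torsion genuinely depends on it; and your closing steps (the double-$\Ext$ embedding giving the Kasch property, and the long exact sequence forcing $\injdim {}_QQ=0$) are the standard way to upgrade ``artinian quotient ring'' to ``quasi-Frobenius quotient ring'' and are fine. In short: your proof is correct modulo a deferred lemma, but that deferred lemma is essentially the theorem the paper cites; what your version buys is a self-contained explanation of exactly where homogeneity and the Cohen--Macaulay property enter, at the cost of re-deriving known results.
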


\begin{proof} It suffices
to show that for each $i \in I$, $A_i$ has a quasi-Frobenius artinian quotient ring.
This follows from \cite[Corollary 6.2]{ASZ1}.
\end{proof}

\begin{lemma}
\label{xxlem4.3}
Suppose $A = \bigoplus_{i \in I} A_i$ satisfies Hypothesis~\ref{xxhyp4.1}. If $A$ is a PI
algebra of finite global dimension, then each $A_i$ is a prime, homogeneous, AS regular, Auslander regular and Cohen--Macaulay
algebra.
\end{lemma}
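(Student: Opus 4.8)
The plan is to deduce everything from the structure theory already in place for each indecomposable summand together with a handful of facts connecting AS Gorenstein, finite global dimension, and primeness for algebras that are module-finite over their centers. Since $A = \bigoplus_{i \in I} A_i$ and all the relevant properties (global dimension, GK dimension, homogeneity, primeness, the Auslander and Cohen--Macaulay conditions) are detected summand by summand, it suffices to fix one index $i$ and prove the claim for $A_i$. Thus I would reduce immediately to the case where $A$ itself is indecomposable, noetherian, homogeneous, AS Gorenstein, Auslander Gorenstein, Cohen--Macaulay, PI, with $\injdim A = \GKdim A =: d$, and with $\gldim A < \infty$.

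First I would observe that under these hypotheses $\gldim A = \injdim {_A A} = d$: an algebra of finite global dimension has global dimension equal to its injective dimension as a module over itself (for noetherian rings the left injective dimension of the ring, when finite, equals the global dimension), so the AS Gorenstein data of dimension $d$ upgrades to AS \emph{regular} of dimension $d$ once we know $\gldim A < \infty$. This immediately gives the AS regular conclusion. The Auslander regular and Cohen--Macaulay conclusions are then inherited directly: $A$ is already Auslander Gorenstein and Cohen--Macaulay by hypothesis, and Auslander regular is by definition Auslander Gorenstein together with finite global dimension, so both follow formally. Homogeneity is part of Hypothesis~\ref{xxhyp4.1}, so nothing further is needed there.

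The remaining point, and the one I expect to be the genuine content, is \emph{primeness} of each indecomposable summand $A_i$. Here the plan is to use the Cohen--Macaulay property together with homogeneity: since $A$ is a homogeneous Cohen--Macaulay algebra of GK dimension $d$, every nonzero ideal $L$ has $\GKdim L = d$ and hence $j(L) = 0$, while any proper quotient $A/L$ with $L \neq 0$ has strictly smaller GK dimension. One then argues that a nonzero two-sided ideal that is also a ring direct summand would contradict indecomposability, and that the absence of zerodivisors on GK-dimension grounds forces the zero ideal to be prime: concretely, if $I, J$ are nonzero ideals with $IJ = 0$, then $I$ is a nonzero submodule of the right annihilator of $J$, which has GK dimension $< d$ by the Cohen--Macaulay/Auslander estimate applied to $A/J$, contradicting $\GKdim I = d$ from homogeneity. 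This is the step where the finite global dimension hypothesis and the full strength of Cohen--Macaulayness (via Lemma~\ref{xxlem2.4}-type grade estimates, or directly via \cite[Theorem 1.3]{SZ1}) come into play; everything else is a formal unwinding of definitions and the summand-by-summand reduction.
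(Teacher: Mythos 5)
Your reduction to a single indecomposable summand, and your handling of the AS regular, Auslander regular, Cohen--Macaulay, and homogeneous conclusions, are all fine and match the paper: in this paper ``AS regular'' and ``Auslander regular'' are \emph{defined} as the corresponding Gorenstein property together with finite global dimension, and homogeneity is already built into Hypothesis~\ref{xxhyp4.1}, so these claims are immediate.

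The gap is in the primeness step, which is indeed the only real content of the lemma. The contradiction you propose --- if $I,J\neq 0$ and $IJ=0$, then $I$ lies in an annihilator ideal of GK dimension $<d$, while proper quotients $A/L$ have GK dimension $<d$ --- is not available, and, tellingly, the argument you actually sketch never uses the finite global dimension hypothesis. Consider $A=\kk[x,y]/(xy)$: it is commutative (hence PI), noetherian, indecomposable, homogeneous of GK dimension $1$ (its associated primes are $(x)$ and $(y)$, both of dimension $1$, so $A$ has no nonzero finite-dimensional ideals), AS Gorenstein with $\injdim A=\GKdim A=1$ (it is a hypersurface), and Auslander Gorenstein and Cohen--Macaulay; thus it satisfies every clause of Hypothesis~\ref{xxhyp4.1}. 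Yet it is not prime: $(x)(y)=0$, while $\lann((y))=(x)$ has GK dimension $1=d$ and $A/(y)\cong\kk[x]$ also has GK dimension $d$, so both of your dimension estimates fail. Any correct proof must therefore invoke $\gldim A<\infty$ in an essential way. The paper does this by citing the theorem of Stafford and Zhang that a noetherian, Auslander regular, Cohen--Macaulay PI ring is a finite direct product of prime rings \cite[Theorem 5.4]{SZ1}; indecomposability of $A_i$ then forces $A_i$ to be prime. A self-contained argument would have to reprove something of that strength (for instance via the artinian quotient ring, whose semisimplicity is where regularity enters), not just compare GK dimensions of ideals and annihilators.
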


\begin{proof} By definition, each $A_i$ is AS regular, 
Auslander regular and Cohen--Macaulay. By \cite[Theorem 5.4]{SZ1},
$A_i$ is prime. A prime PI algebra of finite GK dimension is homogeneous \cite[Lemma 10.18]{KL}. 
The assertion follows.
\end{proof}

We also recall a result in \cite{WZ1}.

\begin{lemma} \cite[Proposition 4.3]{WZ1}
\label{xxlem4.4}
Let $A$ and $B$ be noetherian, PI, AS Gorenstein algebras of the
same injective dimension and let $A\to B$ be an algebra homomorphism 
such that ${}_A B$ and $B_A$ are finitely generated. Then the following 
are equivalent:
\begin{enumerate}
\item[(1)]
$\injdim B_A <\infty$.
\item[(2)] 
$\injdim {_AB} <\infty$.
\item[(3)] 
$\projdim B_A < \infty$.
\item[(4)]
$\projdim {_AB} < \infty$.
\item[(5)] 
$B_A$ is projective over $A$.
\item[(6)] 
$_AB$ is projective over $A$.
\end{enumerate}
As a consequence, if $A$ has finite global dimension, 
then both $B_A$ and $_AB$ are projective.
\end{lemma}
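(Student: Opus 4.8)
First observe that ``AS Gorenstein'' (Definition~\ref{xxdef1.3}) forces all indecomposable ring-direct summands to have the same injective dimension: if $A=\bigoplus A_i$ with some $\injdim A_i<d:=\injdim A$, then for a simple module $S$ supported on $A_i$ one has $\Ext^{\injdim A_i}_A(S,A)=\Ext^{\injdim A_i}_{A_i}(S,A_i)\neq0$, contradicting Definition~\ref{xxdef1.3}(2). So write $A=\bigoplus A_i$, $B=\bigoplus B_j$ as in Lemma~\ref{xxlem1.8}; each summand is then AS Gorenstein of injective dimension $d$, hence (since AS Gorenstein implies {\rm(L1)} and {\rm(R1)} by \cite[Proposition~3.2]{WZ1}) by Corollary~\ref{xxcor3.7} and Corollary~\ref{xxcor3.8} it is Auslander Gorenstein, Cohen--Macaulay, has $\GKdim=\injdim=d$, and has rigid dualizing complex an invertible bimodule placed in degree $-d$. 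Because $B_A=\bigoplus_j(B_j)_A$ and, $B_j$ being indecomposable, its right $A$-module structure factors through a unique algebra map $A_{i(j)}\to B_j$, the whole statement reduces to the case $A,B$ both indecomposable. Finally, $A$ is noetherian with $\injdim{}_AA=\injdim A_A=d<\infty$, so the Iwanaga--Gorenstein dichotomy gives $(1)\Leftrightarrow(3)$ and $(2)\Leftrightarrow(4)$ (for finitely generated modules, finite injective dimension $\Leftrightarrow$ finite projective dimension, both then $\le d$); and $(5)\Rightarrow(3)$, $(6)\Rightarrow(4)$ are trivial. Thus it remains to prove $(3)\Rightarrow(5)$, its mirror $(4)\Rightarrow(6)$, and the passage between sides.

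\textbf{The core: $(3)\Rightarrow(5)$.} Assume $\projdim B_A<\infty$, so $B_A$ is a perfect complex over $A^{op}$. Let $R_A=\Omega_A[d]$ be the rigid (Auslander, Cohen--Macaulay) dualizing complex of $A$ from Lemma~\ref{xxlem3.5}. Along the finite algebra map $A\to B$, the pushforward $R_B:=\RHom_{A^{op}}(B,R_A)\;(\cong\RHom_A(B,R_A))$ is a dualizing complex over $B$ and inherits the Auslander and Cohen--Macaulay properties \cite{YZ3,YZ4}. Two Cohen--Macaulay dualizing complexes over the indecomposable algebra $B$ differ by an \emph{unshifted} invertible bimodule: they differ by an invertible complex, which over an indecomposable noetherian algebra is an invertible bimodule concentrated in one degree, and the Cohen--Macaulay normalization $j_R(M)=-\GKdim M$ pins that degree to $0$. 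Comparing with $B$'s own rigid dualizing complex $\Omega_B[d]$ shows $R_B$ is concentrated in degree $-d$. On the other hand, since $\Omega_A$ is invertible (hence flat and f.g.\ projective on both sides),
\[
R_B\;\cong\;\bigl(\RHom_{A^{op}}(B,A)\otimes_A\Omega_A\bigr)[d],
\]
forcing $\RHom_{A^{op}}(B,A)$ to be concentrated in cohomological degree $0$. This is the unique place the hypothesis $\injdim A=\injdim B$ is used: the common value $d$ appears both as the shift of $R_A$ and, via the Cohen--Macaulay normalization, as the shift of $R_B$, and unequal dimensions would leave a residual shift --- precisely as for $A=\kk[x]\twoheadrightarrow B=\kk$, where $\RHom_{A^{op}}(B,A)$ lives in degree $1$ and $B_A$ is not projective. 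Hence $\omega:=\Hom_{A^{op}}(B,A)=\RHom_{A^{op}}(B,A)$ is a perfect complex concentrated in degree $0$, i.e.\ a finitely generated projective left $A$-module; biduality for perfect complexes gives $B_A\cong\Hom_A({}_A\omega,A)$, a finitely generated projective right $A$-module. This is $(5)$.

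\textbf{Left--right and conclusion.} The mirror argument gives $(4)\Rightarrow(6)$. With $(5)$ in hand, finitely generated projective $B$-modules restrict to finitely generated projective $A$-modules on both sides, so the same dualizing-complex computation now also yields $(3)\Rightarrow(6)$ (and $(4)\Rightarrow(5)$ symmetrically); together with $(1)\Leftrightarrow(3)$, $(2)\Leftrightarrow(4)$ this makes all six conditions equivalent. The stated consequence is then immediate: if $\gldim A<\infty$ then $\projdim B_A$ and $\projdim{}_AB$ are automatically finite, hence both are projective.

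\textbf{Main obstacle.} The delicate technical input underlying the core step is twofold: that the pushforward $\RHom_A(B,R_A)$ along a possibly non-centralizing finite algebra map is genuinely a dualizing complex over $B$ carrying a compatible $B$-bimodule structure and preserving the Auslander and Cohen--Macaulay properties, and that invertible complexes over an indecomposable noetherian PI algebra are shifts of invertible bimodules. Arranging the bimodule bookkeeping for the first point --- where one must exploit that $A$ and $B$ are each module-finite over their affine centers, related through $f$ --- is where the real work lies; everything else is formal.
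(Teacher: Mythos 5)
The paper offers no proof of this statement: it is quoted directly from \cite[Proposition 4.3]{WZ1}, where the argument runs through the Auslander--Buchsbaum formula together with a change-of-rings spectral sequence $\Ext^p_{B^{op}}(\operatorname{Tor}^A_q(T,B),B)\Rightarrow \Ext^{p+q}_{A^{op}}(T,B)$ (for $T$ a finite-dimensional right $A$-module) showing $\depth B_A\geq \injdim B=\injdim A$, whence $\projdim B_A=\injdim A-\depth B_A\leq 0$ once it is finite. Your dualizing-complex route is genuinely different, and its skeleton --- push $R_A$ forward along the finite map, use the Cohen--Macaulay normalization to kill the shift, deduce $\Ext^{i}_{A^{op}}(B,A)=0$ for $i>0$ --- is viable, but three steps need repair.

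First, Lemma~\ref{xxlem3.5} and Corollaries~\ref{xxcor3.7}--\ref{xxcor3.8} require module-finiteness over an affine center, which is not among the hypotheses here (only ``noetherian PI AS Gorenstein''), so your argument proves the lemma only under the additional Hypothesis~\ref{xxhyp0.1}; that suffices for the paper's use in Theorem~\ref{xxthm0.5} but is strictly narrower than what is cited. Second, the reduction to $A$ and $B$ both indecomposable is wrong as stated: central idempotents of $A$ need not map to central idempotents of $B$, so the right $A$-action on an indecomposable summand $B_j$ need not factor through a single $A_{i(j)}$ (consider $\kk\times\kk\to M_2(\kk)$ diagonally). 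This is harmless only because the reduction is unnecessary once all summands of $A$ and of $B$ are known to have the same dimension $d$. Third, and most seriously, ``a perfect complex concentrated in degree $0$ is a finitely generated projective module'' is false: $\kk[x]\xrightarrow{\;x\;}\kk[x]$ is perfect with cohomology $\kk$ in degree $0$ only. What your computation actually yields is $\Ext^{i}_{A^{op}}(B,A)=0$ for $i>0$, and this vanishing alone does \emph{not} give projectivity --- for $A=\kk[x,y]/(xy)$ mapping to its normalization $B=\kk[x]\times\kk[y]$ (both Gorenstein of injective dimension $1$, $B$ finite over $A$) one checks $\Ext^{i}_{A}(B,A)=0$ for all $i>0$ yet $\projdim B_A=\infty$. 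So hypothesis (3) must be invoked at the very end, not only in the degree bookkeeping: if $\projdim B_A=n<\infty$ then minimality of a finite projective resolution forces $\Ext^{n}_{A^{op}}(B,A)\neq 0$, and combined with your vanishing this gives $n=0$, i.e.\ condition (5). With those repairs the argument goes through.
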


We recall the following definition given in \cite[p. 1059]{WZ1}
or \cite[p. 521]{WZ2}.

\begin{definition}
\label{xxdef4.5}
Let $A$ be an algebra and $M$ be a left $A$-module. The 
{\it depth} of $M$ is defined to be
\[
\depth M:=\min \{i \mid \Ext^i_A(S,M)\neq 0 \;\;
{\text{for some simple $A$-module $S$}}\}.
\]
\end{definition}

\begin{proposition}
\label{xxpro4.6}
Let $A$ be a PI algebra satisfying Hypothesis~\ref{xxhyp4.1}. 
Let $d=\GKdim A$. Let $M$ be a nonzero finitely generated left 
(or right) $A$-module.
\begin{enumerate}
\item[(1)]{\rm(The Auslander--Buchsbaum formula)}
If $\projdim M<\infty$, then
\[
\projdim M\leq d-\depth M.
\]
If, further, $A$ is homogeneous, then
\[
\projdim M= d-\depth M.
\]
\item[(2)]{\rm(Bass's theorem)}
If $\injdim M<\infty$, then
\[
\injdim M\leq d.
\]
If, further, $A$ is homogeneous, then
\[
\injdim M=d.
\]
\item[(3)]{\rm(The no-holes theorem)} 
Suppose either $A$ is homogeneous or $M$ is indecomposable.
Then for every integer $i$ between $\depth M$ and 
$\injdim M$,
there is a simple left (or right) 
$A$-module $S$ such that $\Ext^i_A(S,M)\neq 0$.
\end{enumerate}
\end{proposition}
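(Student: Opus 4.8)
The plan is to reduce the statement to the case where $A$ is indecomposable, and then to read off (1)--(3) from the standard homological identities available for Auslander--Gorenstein Cohen--Macaulay algebras that carry a rigid, Auslander dualizing complex. By the left--right symmetry of Hypothesis~\ref{xxhyp4.1} it suffices to treat finitely generated left modules. For the reduction, write $A=\bigoplus_{i\in I}A_i$ as in Hypothesis~\ref{xxhyp4.1}, so that $A\Modleft$ is the product of the categories $A_i\Modleft$: a finitely generated left $A$-module $M$ splits uniquely as $M=\bigoplus_i M_i$ with $M_i$ finitely generated over $A_i$, every simple left $A$-module is simple over exactly one $A_i$, and $\Ext^\ast_A$, $\projdim$, $\injdim$ and $\depth$ are all computed componentwise; thus $\projdim M=\max_i\projdim_{A_i}M_i$, $\injdim M=\max_i\injdim_{A_i}M_i$, and $\depth M=\min_i\depth_{A_i}M_i$. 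Writing $d_i:=\GKdim A_i=\injdim A_i\le d$, each $A_i$ is indecomposable, noetherian, homogeneous, AS Gorenstein, Auslander Gorenstein and Cohen--Macaulay. The inequalities in (1) and (2) then follow from the componentwise statements over the $A_i$. For the equality forms one assumes in addition that $A$ is homogeneous, which, applied to the two-sided ideals $A_i$, forces $d_i=d$ for all $i$; the componentwise equalities then assemble, using $\max_i(d-\depth_{A_i}M_i)=d-\min_i\depth_{A_i}M_i$. For (3) with $A$ homogeneous, the set of degrees $i$ for which $\Ext^i_A(S,M)\ne0$ for some simple $S$ is the union over $i\in I$ of the analogous sets for the $M_i$; for each $i$ with $M_i\ne0$ that set is (by no-holes over $A_i$) the interval from $\depth_{A_i}M_i$ to $\injdim_{A_i}M_i$, which by part (2) over the homogeneous algebra $A_i$ either ends at $d$ or is unbounded above, and which in every case contains $d$ (since $\depth_{A_i}M_i\le\GKdim M_i\le d$), so the union is the full interval from $\depth M$ to $\injdim M$. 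For (3) with $M$ indecomposable, $M$ is supported on a single $A_i$ and the claim is that component's no-holes statement. We may therefore assume from now on that $A$ is indecomposable: a noetherian PI algebra that is homogeneous, AS Gorenstein, Auslander Gorenstein and Cohen--Macaulay with $\injdim A=\GKdim A=d$.

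In this situation, Theorem~\ref{xxthm3.6} and Corollary~\ref{xxcor3.8} furnish a rigid Auslander Cohen--Macaulay dualizing complex $R=\Omega[d]$, with $\Omega$ an invertible $(A,A)$-bimodule; in particular $R$ is the shift of a bimodule that is finitely generated projective on each side, so $R$ has finite projective dimension on both sides. Let $D=\RHom_A(-,R)$ and $D^{op}=\RHom_{A^{op}}(-,R)$ be the resulting duality between the bounded derived categories of finitely generated left and right $A$-modules; the Cohen--Macaulay property gives $j_R(N)=-\GKdim N$ for finitely generated $N$, and the Auslander property locates the cohomology of $D(N)$. With this package in hand, each of (1)--(3) is one of the homological identities of \cite{WZ2} for Auslander--Gorenstein Cohen--Macaulay algebras (in the present PI setting proved by the same arguments combined with the dualizing complex above and Lemma~\ref{xxlem2.4}), the homogeneity refinements relying on the purity of the minimal injective resolution of ${}_AA$ (cf.\ \cite{ASZ1}); what must be checked is just that $A$ satisfies the hypotheses of those results, which is immediate from the previous paragraph and Theorem~\ref{xxthm3.6}. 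Concretely: in (1) the bound $\projdim M\le d-\depth M$ is extracted by dualizing a finite projective resolution of $M$ through $R$ and reading off grades, and homogeneity promotes it to equality via purity; (2) is deduced from (1) applied to $D(M)$, since $\injdim M<\infty$ together with the finite projective dimension of $R$ forces $D(M)$ to have finite projective dimension, after which $\injdim M$ is recovered from $\projdim D(M)$ and $\depth M$ from the grade of $D(M)$, with the right-sided form of Lemma~\ref{xxlem2.4}(2) converting this into the bound $\injdim M\le d$; and (3) is the no-holes theorem in this setting.

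The step I expect to be the genuine obstacle is the no-holes assertion (3): turning ``$\Ext^i_A(S,M)\ne0$ for some simple $S$'' into a gap-free range of $i$ requires linking consecutive cohomological degrees, and this is exactly where the indecomposability of $A$ (or of $M$) is indispensable --- the conclusion is false without it, as the direct-sum examples mentioned in the introduction show, and already in the assembly step above it is part (2) that is used to preclude a gap. The mechanism is that a gap in the relevant degrees would split off a nontrivial summand in the minimal injective resolution of $M$ (equivalently, in the $R$-dual picture), contradicting indecomposability; making that argument precise, rather than merely quoting \cite{WZ2,ASZ1}, is where the real work lies. By contrast, the reduction of the first paragraph and the dualizing-complex bookkeeping of the second are routine.
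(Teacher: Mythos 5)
Your proposal is correct and follows essentially the same route as the paper: decompose $A=\bigoplus_{i\in I}A_i$ into indecomposable homogeneous AS Gorenstein summands, compute $\projdim$, $\injdim$, and $\depth$ componentwise, and quote the homological identities of \cite[Theorem 0.1]{WZ2} for the homogeneous pieces, with indecomposability of $M$ forcing it to live over a single summand in part (3). The only organizational difference is that when $A$ itself is homogeneous the paper applies \cite[Theorem 0.1]{WZ2} directly to $A$ (which is then AS Gorenstein), whereas you reassemble the no-holes ranges of the components, which costs you the extra (true, but not entirely free) observation that each component's range of nonvanishing degrees contains $d$.
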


\begin{proof} First, if $A$ is homogeneous, then $A$ is 
AS Gorenstein. In this case all statements are given in 
\cite[Theorem 0.1]{WZ2}. 

Next, we suppose that $A$ is not homogeneous. 
As in Hypothesis~\ref{xxhyp4.1}, write $A=\bigoplus_{i\in I} A_i$ where each $A_i$ is homogeneous. Write $M=\bigoplus_{i\in I} M_i$ where each $M_i$ is a left $A_i$-module. 
By the assertions in the homogeneous case, for each $i \in I$, we have 
\begin{enumerate}
\item[($1_i$)]
If $\projdim M_i<\infty$, 
\[
\projdim M_i= d_i-\depth M_i,
\]
where $d_i=\injdim A_i$.
\item[($2_i$)]
If $\injdim M_i<\infty$, then
\[
\injdim M_i= d_i.
\]
\item[($3_i$)]
For every integer $s$ between $\depth M_i$ and 
$\injdim M_i$,
there is a simple left $A_i$-module $T$ such that $\Ext^s_{A_i}(T,M_i)\neq 0$.
\end{enumerate}

(1) Suppose that $\projdim M<\infty$. Then $\projdim M=\max_{i\in I}\{\projdim M_i\}$
and $\depth M=\min_{i\in I}\{\depth M_i\}$. Choose $j\in I$ such that
$\projdim M=\projdim M_j$. Then 
\[
\projdim M=\projdim M_j=d_j-\depth M_j\leq d-\depth M
\]
as desired.

(2) Let $j\in I$ satisfy $\injdim M=\injdim M_j$. Then
\[
\injdim M=\injdim M_j=d_j\leq d
\]
as desired.

(3) Since we are in the case that $A$ is not homogeneous, $M$ is 
indecomposable by hypothesis. In this case $M=M_j$ for some 
$j$. But then $M$ is an $A_j$-module and $A_j$ is homogeneous, so the assertion follows by 
\cite[Theorem 0.1(3)]{WZ2}.
\end{proof}

\section{Weak Hopf algebras}
\label{xxsec5}
Our goal is to apply the main results in Sections~\ref{xxsec3} and 
\ref{xxsec4} to weak Hopf algebras. In this section, we recall 
the definition of a weak Hopf algebra and give some examples. 
Throughout, we use Sweedler notation. If $(C, \Delta, 
\epsilon)$ is a coalgebra, then for $c \in C$, we write 
$\Delta(c) = \sum c_1 \otimes c_2$. When there is no danger of confusion, 
we suppress the summation notation and simply write $\Delta(c) = c_1 \otimes c_2$.

\begin{definition}\cite{BNS, Ha2, Ha3}
\label{xxdef5.1}
A \emph{weak bialgebra} is a $5$-tuple $(H, \mu, u, \Delta, \epsilon)$, 
where $(H , \mu, u)$ is a unital associative $\kk $-algebra and 
$(H, \Delta, \epsilon)$ is a counital coassociative $\kk $-coalgebra,  
satisfying
\begin{enumerate}
\item[(a)] (multiplicativity of comultiplication)
\[
\Delta(ab) = \Delta(a)\Delta(b)
\]
for all $a, b \in H$,
\item[(b)] (weak comultiplicativity of the unit)
\[
\Delta^2(1) = (\Delta(1) \otimes 1)(1 \otimes \Delta(1)) = 
(1 \otimes \Delta(1))(\Delta(1) \otimes 1),
\]
\item[(c)] (weak multiplicativity of the counit)
\[
\epsilon(abc) = \epsilon(ab_1)\epsilon(b_2c) = \epsilon(ab_2)\epsilon(b_1c)
\]
for all $a,b , c \in H$.
\end{enumerate}
We do not assume that $H$ is finite-dimensional over $\kk$.
\end{definition}

Because the coproduct does not necessarily preserve the unit, applying the usual 
sumless Sweedler notation we write $\Delta(1) = 1_1 \otimes 1_2$.
A weak bialgebra is a bialgebra if and only if $\Delta(1) = 1 \otimes 1$, if 
and only if $\epsilon(ab) = \epsilon(a)\epsilon(b)$ for all $a,b \in H$.
For a weak bialgebra $H$, the \emph{source counital map} $\epsilon_s: H \to H$ 
is defined by $\epsilon_s(h) = 1_1 \epsilon(h1_2)$ for all $h\in H$. The 
\emph{source counital subalgebra} $H_s$ is defined to be the image of 
$\epsilon_s$. Similarly, the \emph{target counital map} $\epsilon_t: H \to H$ 
is defined by $\epsilon_t(h) = \epsilon(1_1 h) 1_2$ for all $h\in H$, and the 
\emph{target counital subalgebra} is $H_t = \epsilon_t(H)$. The counital 
subalgebras are finite-dimensional, separable, coideal subalgebras of $H$ 
that commute with each other \cite[Propositions 2.2.2 and 2.3.4]{NV}.

\begin{definition} \cite{BNS, Ha2, Ha3}
\label{xxdef5.2}
Let $H$ be a weak bialgebra.
\begin{enumerate}
\item[(1)]
$H$ is a \emph{weak Hopf algebra} if there exists an algebra 
antihomomorphism $S: H \to H$ called the \emph{antipode} satisfying, for all 
$a \in H$:
\begin{align*}
S(a_1)a_2 &= \epsilon_s(a),\\
a_1S(a_2) &= \epsilon_t(a), \\ 
S(a_1)a_2S(a_3) &= S(a).
\end{align*}
\item[(2)]
A \emph{morphism} between weak Hopf algebras $H_1$ and $H_2$ with 
antipodes $S_1$ and $S_2$ is a map $f: H_1 \to H_2$ which is a 
unital algebra homomorphism and a counital coalgebra homomorphism 
satisfying $f \circ S_1 = S_2 \circ f$.
\end{enumerate}
\end{definition}

There are many examples of finite-dimensional weak Hopf algebras in 
the literature, see for example, \cite{BCJ, BNS, Ha1, Ha2, Ha3, NV, Sz}. We now provide several examples of weak Hopf algebras.

\begin{example}
\label{xxex5.3} 
\begin{enumerate}
\item[(1)]
Every Hopf algebra is a weak Hopf algebra.
\item[(2)]
If $H_1$ and $H_2$ are weak Hopf algebras, then so are 
$H_1\oplus H_2$ and $H_1\otimes H_2$, with their usual algebra and coalgebra 
structures.
\item[(3)]
For every positive integer $n$, the matrix algebra $M_n(\kk)$ 
is a weak Hopf algebra, which is a special case of a groupoid 
algebra. As a consequence of part (2), if $H$ is a weak Hopf 
algebra, so is $M_n(H)$.
\end{enumerate}
\end{example}
In contrast to the examples above, note that if $H_1$ and $H_2$ are Hopf algebras, then $H_1\oplus H_2$ and $M_n(H_1)$ (for $n>1$) are not Hopf algebras. 

The {\it face algebras} defined 
by Hayashi \cite{Ha3} are a special class of weak Hopf algebras. Other 
examples include groupoid algebras and their duals, Temperley-Lieb 
algebras, and quantum transformation groupoids (see \cite{NV}). The focus of 
this paper is on infinite-dimensional weak Hopf algebras (of finite 
GK dimension). In addition to the Hopf algebras of finite positive 
GK dimension, we also provide the following examples and constructions.

\begin{example}
\label{xxex5.4}
Suppose that $(W, \mu_W, u_W, \Delta_W, \epsilon_W,S_W)$ is a weak Hopf 
algebra and that $\sigma$ is a weak Hopf algebra automorphism 
of $W$. Then the group $\mathbb{Z} = \langle a \rangle$ acts on $W$ 
via $\sigma$, so $W$ is a $\kk \mathbb{Z}$-module algebra. Hence, 
we may form the smash product $H = W \# \kk \mathbb{Z}$. 

As a vector space, $H = W \otimes \kk \mathbb{Z}$. As an algebra, 
\[ (w \otimes a^m)(v \otimes a^n) = w \sigma^{m}(v) \otimes a^{m+n}
\]
for $w , v \in W$ and $m,n \in \mathbb{Z}$. The unit of $H$ is given by
$1_{H} = 1_W \otimes a^0$. As a coalgebra, define 
$\Delta(w \otimes a^m) = (w_1 \otimes a^m) \otimes (w_2 \otimes a^m)$ 
and $\epsilon(w \otimes a^m) = \epsilon_W(w)\epsilon_{k \mathbb{Z}}(a^m) 
= \epsilon_W(w)$ for all $w \in W$ and $m \in \mathbb{Z}$. We claim that this makes 
$H$ a weak bialgebra. 

Note that since $\sigma$ 
is a weak Hopf algebra automorphism of $W$, we have that for all 
$w \in W$, $\epsilon(\sigma(w)) = \epsilon(w)$ and 
$\sigma(w)_1 \otimes \sigma(w)_2 = \sigma(w_1) \otimes \sigma(w_2)$.
First, $\Delta$ is multiplicative as
\begin{align*} 
\Delta((w \otimes a^m)(v \otimes a^n)) &= \Delta(w \sigma^m(v) \otimes a^{m+n}) \\
&= ((w \sigma^m(v))_1 \otimes a^{m+n}) \otimes ((w \sigma^m(v))_2 \otimes a^{m+n}) \\
&=(w_1\sigma^m(v)_1 \otimes a^{m+n}) \otimes (w_2 \sigma^m(v)_2 \otimes a^{m+n}) \\
&=(w_1\sigma^m(v_1) \otimes a^{m+n}) \otimes (w_2 \sigma^m(v_2) \otimes a^{m+n}) \\
&=((w_1 \otimes a^m) \otimes (w_2 \otimes a^m) ) ((v_1 \otimes a^{n}) \otimes (v_2 \otimes a^{n})) \\
&=\Delta(w \otimes a^m)\Delta(v \otimes a^n).
\end{align*}

We also have that the unit is weakly comultiplicative, since
\begin{align*} 
\Delta^2(1_{H}) &= (1_1 \otimes a^0) \otimes (1_2 \otimes a^0) \otimes (1_3 \otimes a^0) \\
&= ((1_1 \otimes a^0) \otimes (1_2 \otimes a^0) \otimes (1_W \otimes a^0) ) 
((1_W \otimes a^0) \otimes (1'_1 \otimes a^0) \otimes (1'_2 \otimes a^0)  )
\\
&= (\Delta(1_H) \otimes 1_H)(1_H \otimes \Delta(1_H) ),
\end{align*}
and similarly for the other weak comultiplicativity axiom. 

Finally, to 
see that $\epsilon$ is weakly multiplicative,  we have 
\begin{align*}
\epsilon((u \otimes a^l) (v \otimes a^m) (w \otimes a^n)) &
  = \epsilon(u \sigma^{l}(v) \sigma^{l+m}(w) \otimes a^{l + m+ n}) \\
&= \epsilon_W(u \sigma^{l}(v) \sigma^{l+m}(w))\\
&= \epsilon_W(u \sigma^{l}(v)_1)\epsilon_W(\sigma^{l}(v)_2 \sigma^{l+m}(w)) \\
&= \epsilon_W(u \sigma^{l}(v)_1)\epsilon_W(\sigma^{l}(v_2\sigma^{m}(w))) \\
&= \epsilon_W(u \sigma^{l}(v_1))\epsilon_W(v_2 \sigma^{m}(w)) \\
&= \epsilon((u \otimes a^l) (v_1 \otimes a^m)) \epsilon((v_2 \otimes a^m) (w \otimes a^n)) \\
&= \epsilon((u \otimes a^l) (v \otimes a^m)_1) \epsilon((v \otimes a^m)_2 (w \otimes a^n)),
\end{align*}
and similarly for the other weak multiplicativity axiom.  Thus $H$ is a weak bialgebra, as claimed.

It is now easy to check that we can give $H$ the structure of a weak Hopf algebra by defining the 
antipode as
\begin{gather*}
S(w \otimes a^m) = (1 \otimes S_{k \mathbb{Z}}(a^m))(S_W(w) \otimes a^0) = (1 \otimes a^{-m})(S_W(w) \otimes a^0) \\
= \sigma^{-m}(S_W(w)) \otimes a^{-m}.
\end{gather*}

As an algebra, $H$ is isomorphic to the skew Laurent ring $W[t^{\pm 1}; \sigma]$, where 
$w \in W$ corresponds to $w \otimes a^0$ and $t$ corresponds to $1 \otimes a$. 
Hence, when $W$ is finite-dimensional, $H$ is a weak Hopf algebra of GK dimension 1. 
Under this isomorphism, $\Delta(t) = \Delta(1)(t \otimes t)$, so $t$ is group-like 
in the sense of \cite{NV}. 
\end{example}

\begin{example}
\label{xxex5.5}
Let $W$ be a weak Hopf algebra.  Let $\sigma: W \to W$ be an algebra automorphism 
and assume that (i) there is an algebra homomorphism $\chi: W \to \kk$ such that $\sigma(w) = \sum \chi(w_1) w_2 = \sum w_1 \chi(w_2)$ for all $w \in W$, that is, $\sigma$ is both a left and right winding homomorphism of some character $\chi$; and (ii)  the antipode $S$ of $W$ satisfies $S = \sigma S \sigma$.

Then by \cite[Theorem 4.4]{LSL}, there is a unique weak Hopf algebra structure 
on the Ore extension $H = W[t; \sigma]$ such that $\Delta_H$, $\epsilon_H$, and $S_H$ restrict on $W$ 
to $\Delta_W$, $\epsilon_W$, and $S_W$ respectively and $t$ is primitive in the weak sense, in other words,  $\Delta(t) = \Delta(1)( 1 \otimes t + t \otimes 1)$; $\epsilon(t) = 0$; and $S(t) = -t$.

Note that if $W$ is finite-dimensional, then $H$ has GK dimension $1$.
\end{example}

\begin{remark}
\label{xxex5.6}
In fact, Lomp, Sant'Ana, and Leite dos Santos  give necessary and 
sufficient conditions in \cite[Theorem 4.4]{LSL} for the existence of a weak Hopf algebra 
structure on a more general Ore extension $W[t; \sigma, \delta]$ which extends
the weak Hopf algebra structure on $W$, under the assumption that 
$t$ is weakly skew-primitive. When $W$ is finite-dimensional, these 
Ore extensions give additional examples of weak Hopf algebras of 
GK dimension 1 which generalize the previous example.  
\end{remark}

Applying the  constructions in the previous examples repeatedly, we can obtain many 
different weak Hopf algebras of positive GK dimension.

\section{An analog of (L1) and (R1) for monoidal categories}
\label{sec:tc}

It is well-known that if $H$ is a weak Hopf algebra, then there is a monoidal product endowing $H\Modleft$ with the structure of a monoidal category. In this section, we study analogs of (L1) and (R1) in the more general setting of monoidal categories, and then apply these results to the special case of modules over a weak Hopf algebra in the next section.  The reader can find the basic definitions of monoidal categories and 
related concepts in \cite{EGNO}.
\begin{definition}
\label{def:star}
Let $\mc{C}$ be a $\kk$-linear abelian category. We say that $\mc{C}$ satisfies (C1) if,
for all projective objects $P \in \mc{C}$ and all $i \geq 0$, the functor $\Ext_{\mc{C}}^i(-, P)$ is exact 
on the subcategory of objects of finite length in $\mc{C}$.
\end{definition}

In this definition $\Ext$ means Yoneda Ext, though in our main intended application where $\mc{C} = H\Modleft$ 
for an algebra $H$, the $\Ext$ functors can be computed with projective or injective resolutions, as usual.
We want to show that (C1) follows from quite general hypotheses when $\mc{C}$ is 
a monoidal category. For the rest of this section, suppose that the $\kk$-linear abelian category $\mc{C}$ is also a monoidal category, with bilinear, biexact tensor product denoted $\otensor$. Let $\mathbbm{1}$ denote the unit object of $\mc{C}$.

\begin{definition}
\label{def:internal}
Suppose that for some object $M \in \mc{C}$, the functor 
$- \otensor M$ has a right adjoint. 
We write $\ohom(M, -)$ for the right adjoint. 
By definition, for all $P, N \in \mc{C}$ there is an adjoint isomorphism
\[
\Phi_{P,M,N}: \Hom_{\mc{C}}(P \otensor M, N) \to \Hom_{\mc{C}}(P, \ohom(M, N))
\]
which is natural in $P$ and $N$. The object $\ohom(M,N)$ is called the \emph{internal Hom} from $M$ to $N$.
\end{definition}

Another way of describing the definition above is as follows: $\ohom(M, N) \in \mc{C}$ is 
the object representing the functor $\Hom_{\mc{C}}(- \otensor M, N)$, when that functor is representable.
If $\mc{D}$ is a full subcategory of $\mc{C}$ such that $- \otensor M$ has a right 
adjoint for all $M \in \mc{D}$, then $\ohom(M, N)$ is defined for all $M \in \mc{D}, N \in \mc{C}$.
It is easy to see in this case that $\ohom(-, -)$ is a bifunctor $\mc{D} \times \mc{C} \to \mc{C}$, which 
is contravariant in the first coordinate.

We have the following version of Freyd's adjoint functor theorem.
\begin{lemma}
\label{lem:freyd}
Assume that $\mc{C}$ is cocomplete, in other words that $\mc{C}$ has (small) direct sums, and that $\mc{C}$ has a generator. 
Then the functor $- \otensor M: \mc{C} \to \mc{C}$ has a right adjoint if and only if it commutes with all (small) direct sums in $\mc{C}$, 
for all $M \in \mc{C}$.
\end{lemma}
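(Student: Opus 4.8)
The approach is to deduce this from the dual form of the Special Adjoint Functor Theorem. The ``only if'' direction requires no hypothesis on $\mc{C}$: a functor possessing a right adjoint is itself a left adjoint, hence preserves all colimits, and in particular all small direct sums.

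For the ``if'' direction, fix $M$ and suppose $-\otensor M$ commutes with all small direct sums. Because $\otensor$ is biexact, $-\otensor M$ is exact, so it preserves cokernels and finite direct sums; since every small colimit in an additive category is built from small coproducts and cokernels, $-\otensor M$ therefore preserves all small colimits, i.e.\ it is cocontinuous. It then remains to verify the set-theoretic hypotheses of the theorem. Let $U$ be a generator of $\mc{C}$. Since $\mc{C}$ has coproducts, for every object $X$ the canonical morphism $e\colon \bigoplus_{f\in\Hom_{\mc{C}}(U,X)} U \to X$ is an epimorphism: if its cokernel $q$ were nonzero, faithfulness of $\Hom_{\mc{C}}(U,-)$ would produce some $f_0\colon U\to X$ with $q f_0\neq 0$, contradicting $\operatorname{im} f_0\subseteq\operatorname{im} e=\ker q$. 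Applying this fact to each subobject $A\subseteq X$ shows that $A$ is the sum of the images of those $f\colon U\to X$ which factor through $A$; hence the subobjects of $X$ inject into the subsets of the set $\Hom_{\mc{C}}(U,X)$, so $\mc{C}$ is well-powered, and therefore, being abelian, co-well-powered. Now the dual Special Adjoint Functor Theorem applies to the cocontinuous functor $-\otensor M\colon\mc{C}\to\mc{C}$ and produces the desired right adjoint.

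I expect the only delicate point to be this last bookkeeping, that is, extracting co-well-poweredness of $\mc{C}$ from the bare assumption that $\mc{C}$ has a generator; once that is in place the argument is formal. As an alternative to citing the Special Adjoint Functor Theorem, one could instead verify Freyd's solution-set condition for $-\otensor M$ directly: given $h\colon C\otensor M\to N$, precompose with an epimorphism $\bigoplus_J U\twoheadrightarrow C$ (which remains epic after the exact, coproduct-preserving functor $-\otensor M$) to present $h$ through a family of morphisms drawn from the fixed set $\Hom_{\mc{C}}(U\otensor M,N)$, and then bound the number of such families; but routing through the theorem is cleaner.
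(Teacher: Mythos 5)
Your proof is correct. The paper in fact gives no argument for this lemma at all --- it is stated as a known ``version of Freyd's adjoint functor theorem'' and immediately left behind --- so there is no proof to compare against; your derivation from the dual Special Adjoint Functor Theorem (upgrading preservation of direct sums to full cocontinuity via exactness of $-\otensor M$ and the abelian structure of $\mc{C}$, and extracting well-poweredness, hence co-well-poweredness, from the generator) is precisely the standard justification the authors are implicitly citing, and every step checks out.
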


Recall the definition of a left dual $V^*$ of an object $V \in \mc{C}$ \cite[Definition 2.10.1]{EGNO}.
\begin{lemma}
\label{lem:dualiso2}
Let $\mc{C}$ be as above. Suppose that $V \in \mc{C}$ 
has a left dual $V^*$.
\begin{enumerate}
\item[(1)] The functor $ - \otensor V^*$ is a right adjoint 
of $- \otensor V$. In particular, the internal Hom, $\ohom(V, N)$, exists 
for all $N \in \mc{C}$, and $\ohom(V, N) \cong N \otensor V^*$.

\item[(2)] The functor $\ohom(V, -)$ is exact.

\item[(3)] For any projective object $P \in \mc{C}$, the object $P \otensor V$ is also projective.
\end{enumerate}
\end{lemma}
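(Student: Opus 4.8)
The plan is to derive parts (2) and (3) from the adjunction asserted in part (1), which is the standard ``duals give adjoints'' fact. For part (1), I would begin by recalling from \cite[Definition 2.10.1]{EGNO} that a left dual $V^*$ of $V$ comes equipped with an evaluation $\operatorname{ev}_V\colon V^*\otensor V\to\mathbbm{1}$ and a coevaluation $\operatorname{coev}_V\colon\mathbbm{1}\to V\otensor V^*$ subject to the two zig-zag (snake) identities. Using these, I would write down the unit and counit of a candidate adjunction exhibiting $-\otensor V$ as left adjoint to $-\otensor V^*$: the unit $\eta_P\colon P\to(P\otensor V)\otensor V^*$ as the composite
\[
P\cong P\otensor\mathbbm{1}\xrightarrow{\ \operatorname{id}_P\otensor\operatorname{coev}_V\ }P\otensor(V\otensor V^*)\cong(P\otensor V)\otensor V^*,
\]
and the counit $\varepsilon_N\colon(N\otensor V^*)\otensor V\to N$ as
\[
(N\otensor V^*)\otensor V\cong N\otensor(V^*\otensor V)\xrightarrow{\ \operatorname{id}_N\otensor\operatorname{ev}_V\ }N\otensor\mathbbm{1}\cong N,
\]
where the unlabelled isomorphisms are the associativity and unit constraints. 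A diagram chase using naturality of those constraints together with the two zig-zag identities verifies the triangle identities, so $-\otensor V$ is indeed left adjoint to $-\otensor V^*$ (alternatively one may simply invoke this standard fact from \cite{EGNO}). Since, by Definition~\ref{def:internal}, $\ohom(V,-)$ denotes the right adjoint of $-\otensor V$ whenever such an adjoint exists, and right adjoints are unique up to natural isomorphism, we conclude that $\ohom(V,N)$ exists and $\ohom(V,N)\cong N\otensor V^*$ for all $N\in\mc{C}$.

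Part (2) is then immediate: $\ohom(V,-)\cong(-)\otensor V^*$, which is exact because $\otensor$ is biexact by standing hypothesis. For part (3), let $P\in\mc{C}$ be projective; to show $P\otensor V$ is projective it suffices to show $\Hom_{\mc{C}}(P\otensor V,-)$ is exact. By part (1),
\[
\Hom_{\mc{C}}(P\otensor V,-)\;\cong\;\Hom_{\mc{C}}\bigl(P,\ohom(V,-)\bigr)\;\cong\;\Hom_{\mc{C}}\bigl(P,(-)\otensor V^*\bigr),
\]
and this is the composite of the exact functor $(-)\otensor V^*$ with the exact functor $\Hom_{\mc{C}}(P,-)$, hence exact.

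The only step requiring genuine care is the verification of the triangle identities in part (1), and even that is the textbook computation translating the zig-zag identities for a duality into the unit--counit identities for an adjunction; once it is in hand, parts (2) and (3) follow in one line each from biexactness of $\otensor$ and projectivity of $P$.
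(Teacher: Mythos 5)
Your proposal is correct and follows essentially the same route as the paper: the paper cites \cite[Proposition 2.10.8]{EGNO} for the adjunction in part (1) where you spell out the unit/counit and zig-zag verification, and your arguments for parts (2) and (3) (exactness of $-\otensor V^*$ from biexactness, and the composite $\Hom_{\mc{C}}(P, (-)\otensor V^*)$ of exact functors) match the paper's proof.
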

\begin{proof}

(1) See \cite[Proposition 2.10.8]{EGNO}.

(2) This follows from part (1), since $\ohom(V, -) \cong - \otensor V^*$ and $\otensor$ is biexact.

(3) This is the same proof as in \cite[Proposition 4.2.12]{EGNO}. We have an isomorphism of functors 
$\Hom_{\mc{C}}(P \otensor V, -) \cong \Hom_{\mc{C}}(P, - \otensor V^*)$ by part (1). Since $\otensor$ is biexact and $P$ is projective, the second functor is exact. So the first functor is exact, implying that $P \otensor V$ is projective.
\end{proof}

We now get the following adjunction at the level of Ext.
\begin{lemma}
\label{lem:extadjunction2}
Suppose that $\mc{C}$ has enough projectives. Let $\mc{D}$ be a full subcategory of $\mc{C}$ such that 
every $V \in \mc{D}$ has a left dual $V^*$.

Then for all $M, N \in \mc{C}$ and $V \in \mc{D}$ there is a vector space isomorphism
\[
\Ext^i_{\mc{C}}(M, \ohom(V, N)) \cong \Ext^i_{\mc{C}}(M \otensor V, N),
\]
which is natural in $M$, $V$, and $N$.
\end{lemma}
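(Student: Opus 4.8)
The plan is to compute both sides from a single projective resolution of $M$ and to transport it across the tensor--Hom adjunction $(-\otensor V,\ \ohom(V,-))$ supplied by Lemma~\ref{lem:dualiso2}(1). Since $\mc{C}$ has enough projectives, Yoneda $\Ext$ agrees with the $\Ext$ computed from projective resolutions, so I would fix a projective resolution $P_\bullet \to M$ in $\mc{C}$. The first step is to check that $P_\bullet \otensor V \to M \otensor V$ is again a projective resolution: each $P_i \otensor V$ is projective by Lemma~\ref{lem:dualiso2}(3), and exactness of the augmented complex $P_\bullet \otensor V \to M \otensor V \to 0$ follows because $-\otensor V$ is exact, $\otensor$ being biexact.

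The second step is to apply the adjunction isomorphism $\Phi_{-,V,N}$ of Definition~\ref{def:internal} termwise: for each $i$ we have $\Phi_{P_i,V,N}\colon \Hom_{\mc{C}}(P_i \otensor V, N) \xrightarrow{\ \cong\ } \Hom_{\mc{C}}(P_i, \ohom(V,N))$, and since $\Phi$ is natural in its first argument these assemble into an isomorphism of cochain complexes
\[
\Hom_{\mc{C}}(P_\bullet, \ohom(V,N)) \;\cong\; \Hom_{\mc{C}}(P_\bullet \otensor V, N).
\]
Taking cohomology in degree $i$, the left-hand side computes $\Ext^i_{\mc{C}}(M, \ohom(V,N))$ via the resolution $P_\bullet$ of $M$, while the right-hand side computes $\Ext^i_{\mc{C}}(M \otensor V, N)$ via the resolution $P_\bullet \otensor V$ of $M \otensor V$ from the first step. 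This gives the desired isomorphism.

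For naturality, dependence on $N$ is immediate from naturality of $\Phi_{P_i,V,-}$ in $N$. Naturality in $M$ is the standard comparison-of-resolutions argument: a morphism $M \to M'$ lifts to a chain map of chosen projective resolutions, unique up to homotopy, and tensoring with $V$ and applying $\Phi$ respects this. Naturality in $V$ follows by combining naturality of $\Phi$ with functoriality of $\ohom(-,-)$ as a bifunctor $\mc{D}\times\mc{C}\to\mc{C}$ (noted after Definition~\ref{def:internal}): a morphism $f\colon V\to V'$ in $\mc{D}$ induces compatible maps $M\otensor V \to M\otensor V'$ and $\ohom(V',N)\to\ohom(V,N)$, and one checks commutativity on the complex $\Hom_{\mc{C}}(P_\bullet,-)$.

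The only point that needs care is the bookkeeping for naturality in $V$, since both sides are contravariant in $V$ and the identification passes through the adjoint pair $(-\otensor V,\ \ohom(V,-))$; but once the termwise isomorphisms and the functoriality of the internal Hom are in place, this is a formal diagram chase, so I do not expect a real obstacle. Everything else is the usual ``balancing'' argument for the derived functors of a two-variable adjunction.
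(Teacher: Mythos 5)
Your proposal is correct and follows essentially the same route as the paper: compute $\Ext^i_{\mc{C}}(M\otensor V,N)$ from the projective resolution $P_\bullet\otensor V$ of $M\otensor V$ (using Lemma~\ref{lem:dualiso2}(3) and biexactness of $\otensor$), then transport the Hom-complex across the adjunction from Lemma~\ref{lem:dualiso2}(1). The paper leaves the naturality as ``easy to check,'' which you spell out in more detail, but there is no substantive difference.
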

\begin{proof}
This is similar to the proof in \cite[Proposition 1.3]{BG}. Let $P^{\bullet}$ be a projective 
resolution of $M$. By Lemma~\ref{lem:dualiso2} (3), the complex $P^{\bullet} \otensor V$ consists of projectives, and since $ - \otensor V$ is exact, it forms a projective resolution of $M \otensor V$.
Then $\Hom_{\mc{C}}(P^{\bullet} \otensor V, N)$ has homology groups $\Ext^i_{\mc{C}}(M \otensor V, N)$. On the other hand, using that 
$- \otensor V$ has a right adjoint by Lemma~\ref{lem:dualiso2}(1), 
this complex is isomorphic to $\Hom_{\mc{C}}(P^{\bullet}, \ohom(V, N))$, which has homology groups $\Ext^i_{\mc{C}}(M, \ohom(V, N))$.
The naturality is easy to check.
\end{proof}

The main result of this section is the following version of \cite[Lemma 1.11]{BG} and \cite[Lemma 3.4]{WZ1}.
\begin{proposition}
\label{lem:propexact}
let $\mc{C}$ be as above.  Suppose that $\mc{C}$ has enough projectives. Let $\mc{D}$ be a full abelian subcategory of $\mc{C}$ such that 
every $V \in \mc{D}$ has a left dual $V^* \in \mc{D}$.
\begin{enumerate}
\item[(1)] Suppose $Q \in \mc{C}$ is projective and let $V \in \mc{D}$.
Then $P = Q \otensor V^*$ is projective and for any $i \geq 0$, there is an isomorphism $\Ext^i_{\mc{C}}(V, Q) \cong \Ext^n_{\mc{C}}(\mathbbm{1}, Q \otensor V^*)$, which is natural in $V \in \mc{D}$.

\item[(2)] For any projective object $Q \in \mc{C}$ and any $i \geq 0$, the functor $\Ext^i_{\mc{C}}(-, Q)$ is a contravariant exact functor $\mc{D} \to \kk \Modleft$.
\end{enumerate}
\end{proposition}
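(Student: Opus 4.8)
The plan is to reduce everything to the Ext-adjunction of Lemma~\ref{lem:extadjunction2} together with the fact (Lemma~\ref{lem:dualiso2}(3)) that tensoring a projective object with a dualizable object stays projective, after first establishing that the left-duality functor is exact on $\mc{D}$.

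\emph{Part (1).} Apply Lemma~\ref{lem:extadjunction2} with $M=\mathbbm{1}$ and $N=Q$, and rewrite $\ohom(V,Q)\cong Q\otensor V^*$ using Lemma~\ref{lem:dualiso2}(1) and $\mathbbm{1}\otensor V\cong V$ using the left unit constraint: this yields an isomorphism $\Ext^i_{\mc{C}}(V,Q)\cong\Ext^i_{\mc{C}}(\mathbbm{1},Q\otensor V^*)$, natural in $V\in\mc{D}$ by the naturality clauses of those results together with functoriality of $\ohom(-,Q)$ and of the unit constraint. Since $V^*\in\mc{D}$, it has a left dual (namely $V^{**}\in\mc{D}$), so Lemma~\ref{lem:dualiso2}(3) applies to $V^*$ and shows that $P=Q\otensor V^*$ is projective.

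\emph{Exactness of the duality functor.} Next I would prove that $(-)^*\colon\mc{D}\to\mc{D}$ is a contravariant exact functor. For left-exactness: from Definition~\ref{def:internal} and the identification $\ohom(W,\mathbbm{1})\cong W^*$ one obtains a natural isomorphism $\Hom_{\mc{C}}(X,W^*)\cong\Hom_{\mc{C}}(X\otensor W,\mathbbm{1})$; applying the left-exact functor $\Hom_{\mc{C}}(-,\mathbbm{1})$ to the exact sequence $0\to X\otensor V'\to X\otensor V\to X\otensor V''\to 0$ (biexactness) and then invoking Yoneda shows that $0\to(V'')^*\to V^*\to(V')^*$ is exact in $\mc{C}$, hence in the full abelian subcategory $\mc{D}$. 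For right-exactness, i.e.\ that $f^*\colon V^*\to(V')^*$ is an epimorphism whenever $f\colon V'\hookrightarrow V$ is a monomorphism, I use the other adjunction attached to a left dual, $W^*\otensor(-)\dashv W\otensor(-)$, i.e.\ $\Hom_{\mc{C}}(W^*,B)\cong\Hom_{\mc{C}}(\mathbbm{1},W\otensor B)$ naturally (see~\cite{EGNO}): since $f\otensor\mathrm{id}_B$ is then a monomorphism, $\Hom_{\mc{C}}(\mathbbm{1},V'\otensor B)\hookrightarrow\Hom_{\mc{C}}(\mathbbm{1},V\otensor B)$, so the induced map $\Hom_{\mc{C}}((V')^*,B)\to\Hom_{\mc{C}}(V^*,B)$ is injective for every $B\in\mc{C}$, which forces $f^*$ to be epi.

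\emph{Part (2).} Given a short exact sequence $0\to V'\to V\to V''\to 0$ in $\mc{D}$, exactness of $(-)^*$ gives a short exact sequence $0\to(V'')^*\to V^*\to(V')^*\to 0$, and tensoring with $Q$ (biexactness) produces a short exact sequence $0\to Q\otensor(V'')^*\to Q\otensor V^*\to Q\otensor(V')^*\to 0$ all of whose terms are projective by Lemma~\ref{lem:dualiso2}(3). A short exact sequence of projectives splits, so applying the additive functor $\Ext^i_{\mc{C}}(\mathbbm{1},-)$ yields a (split, hence short) exact sequence; under the natural isomorphism of part (1) this is precisely $0\to\Ext^i_{\mc{C}}(V'',Q)\to\Ext^i_{\mc{C}}(V,Q)\to\Ext^i_{\mc{C}}(V',Q)\to 0$, which is the assertion that $\Ext^i_{\mc{C}}(-,Q)$ is a contravariant exact functor $\mc{D}\to\kk\Modleft$. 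I expect the only genuine obstacle to be the exactness of $(-)^*$, and in particular its right-exactness: Lemma~\ref{lem:dualiso2} records only the ``right'' adjunction coming from a left dual, whereas the argument above needs the ``left'' one; once that is granted, the remainder is a formal splicing of the Ext-adjunction, biexactness, and the splitting of short exact sequences of projectives.
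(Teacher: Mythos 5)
Your proof is correct and follows essentially the same route as the paper: part (1) via the Ext-adjunction of Lemma~\ref{lem:extadjunction2} together with $\ohom(V,Q)\cong Q\otensor V^*$, and part (2) by dualizing the short exact sequence, tensoring with $Q$ to get a split exact sequence of projectives, and applying $\Ext^i_{\mc{C}}(\mathbbm{1},-)$. The only difference is that you prove exactness of the left-duality functor directly from the two adjunctions attached to a left dual, whereas the paper simply cites the argument of \cite[Proposition 4.2.9]{EGNO}; your verification is sound.
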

\begin{proof}
(1) By Lemma~\ref{lem:dualiso2}(1), there is an isomorphism $Q \otensor V^* \to \ohom(V, Q)$, which it is straightforward to check is natural in $V \in \mc{D}$.  Moreover, by Lemma~\ref{lem:dualiso2}(3), $P = Q \otensor V^*$ is projective.
Now by the adjoint isomorphism in Lemma~\ref{lem:extadjunction2}, we see that 
\[
\Ext^i_{\mc{C}}(\mathbbm{1}, Q \otensor V^*) \cong \Ext^i_{\mc{C}}(\mathbbm{1}, \ohom(V, Q)) \cong \Ext^i_{\mc{C}}(\mathbbm{1} \otensor V, Q) \cong \Ext^i_{\mc{C}}(V, Q),
\]
where all of the displayed isomorphisms are natural in $V \in \mc{D}$.

(2) Let $0 \to V_1 \to V_2 \to V_3 \to 0$ be an exact sequence in $\mc{D}$. By 
assumption, $V_1$, $V_2$, and $V_3$ all have left duals. Note that the functor which assigns to an object its left dual is an exact functor, by the same proof as in \cite[Proposition 4.2.9]{EGNO}. We therefore have an exact sequence $0 \to V_3^* \to V_2^* \to V_1^* \to 0$ of objects in $\mc{D}$. 
Since $\otensor$ is biexact, $0 \to Q \otensor V_3^* \to Q \otensor V_2^* \to Q \otensor V_1^* \to 0$ is an exact sequence as well. 
As the terms of this sequence are all projective by Lemma~\ref{lem:dualiso2}, the sequence is split. Then 
setting $P_i = Q \otensor V_i^*$, we have 
\[
0 \to \Ext^i_{\mc{C}}(\mathbbm{1}, P_3) \to \Ext^i_{\mc{C}}(\mathbbm{1}, P_2) \to \Ext^i_{\mc{C}}(\mathbbm{1}, P_1) \to 0
\]
is exact, since $\Ext^i_{\mc{C}}$ commutes with finite direct sums in the second coordinate. By the isomorphism in part (1), 
\[
0 \to \Ext^i_{\mc{C}}(V_3, Q) \to \Ext^i_{\mc{C}}(V_2, Q) \to \Ext^i_{\mc{C}}(V_1, Q) \to 0
\]
is exact, as required.
\end{proof}

\begin{corollary}
\label{cor:l1}
Suppose that $\mc{C}$ is a $\kk$-linear abelian monoidal category with bilinear, biexact tensor product $\otimes$.  Suppose that $\mc{C}$ has enough projectives. Let $\mc{D}$ be the full subcategory of $\mc{C}$ consisting of finite length objects, and suppose that every $V \in \mc{D}$ has a left dual $V^* \in \mc{D}$. Then $\mc{C}$ satisfies the condition {\rm (C1)} of Definition~\ref{def:star}.
\end{corollary}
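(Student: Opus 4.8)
The plan is to obtain this as an immediate consequence of Proposition~\ref{lem:propexact}(2), applied with $\mc{D}$ taken to be the full subcategory of $\mc{C}$ consisting of the objects of finite length. To invoke that proposition one must check that $\mc{D}$, so defined, is a full \emph{abelian} subcategory of $\mc{C}$; the other hypothesis needed there --- that every $V \in \mc{D}$ admits a left dual $V^{*} \in \mc{D}$ --- is precisely the standing assumption of the corollary, so nothing further is required on that point.

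First I would record why the finite length objects form a full abelian subcategory. This is standard Jordan--H\"older theory: in any abelian category the class of objects of finite length is closed under subobjects, quotient objects, and --- since length is additive on short exact sequences --- finite direct sums and extensions. Hence, given a morphism $f\colon V_{1} \to V_{2}$ in $\mc{D}$, its kernel, image, coimage, and cokernel are all subquotients of $V_{1} \oplus V_{2}$ and therefore again lie in $\mc{D}$; moreover the inclusion $\mc{D} \hookrightarrow \mc{C}$ is exact. Thus $\mc{D}$ is abelian, and it is full by construction.

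With these points verified, Proposition~\ref{lem:propexact}(2) applies directly: for every projective object $P \in \mc{C}$ and every $i \geq 0$, the functor $\Ext^{i}_{\mc{C}}(-, P)$ is a contravariant exact functor on $\mc{D}$. Since $\mc{D}$ is exactly the subcategory of finite length objects of $\mc{C}$, this says precisely that $\mc{C}$ satisfies condition (C1) of Definition~\ref{def:star}, which is the assertion.

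There is essentially no obstacle in this final step: the real work --- using biexactness of the tensor product, exactness of the left-dual functor, and the splitting of the short exact sequence $0 \to P \otensor V_{3}^{*} \to P \otensor V_{2}^{*} \to P \otensor V_{1}^{*} \to 0$ into projectives --- has already been carried out inside Proposition~\ref{lem:propexact}. The only thing one must be slightly careful about when writing this up is that the two hypotheses of that proposition (``full abelian subcategory'' and ``closed under left duals'') are genuinely met by the finite-length subcategory, which is exactly what the middle paragraph ensures.
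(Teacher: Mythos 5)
Your proposal is correct and takes exactly the route the paper intends: the corollary is stated as an immediate consequence of Proposition~\ref{lem:propexact}(2) applied to the subcategory of finite length objects, and the only point requiring verification is that this subcategory is a full abelian subcategory closed under left duals, which you check correctly (the paper leaves this implicit). Your middle paragraph supplying the Jordan--H\"older argument is a harmless elaboration of a standard fact the paper takes for granted.
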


\section{Proofs of Theorems~\ref{xxthm0.2}--\ref{xxthm0.7}}
\label{xxsec7}

Let $H$ be a weak Hopf algebra. 
The monoidal product on $H \Modleft$ is defined as follows.
For $M,N \in H \Modleft$, $M \otimes_k N$ is a (non-unital) left $H$-module 
via the action $h \cdot (m \otimes n) = \sum h_1m \otimes h_2n$.  Then one defines 
\[
M \otensor N := \Delta(1) (M \otimes N) = \big\{ 1_1m \otimes 1_2 n \mid m \in M, n \in N \big\},
\]
which is a unital submodule of $M \otimes N$ and thus in $H \Modleft$.  
This makes $H \Modleft$ into a monoidal category \cite[Section 5.1]{NV}.

There is also 
another way to describe this product which is often convenient.  Recall 
the counital subalgebra $H_t = \epsilon_t(H)$, where $\epsilon_t(h) = \epsilon(1_1 h) 1_2$.
Let us define also $\overline{\epsilon_s}(h) = 1_1 \epsilon(1_2 h)$.
If $M \in H \Modleft$, then of course $M$ is also a left $H_t$-module by restriction.
The module $M$ also has a right $H_t$-structure, where for $m \in M$, $h \in H_t$ we define $m \cdot h := \overline{\epsilon_s}(h)m$.  Under these two actions, $M$ becomes an $(H_t, H_t)$-bimodule.  Then we can also identify $M \otensor N$ with $M \otimes_{H_t} N$, with 
the same formula for the left $H$-action, that is $h \cdot (m \otimes n) =  h_1 m \otimes h_2 n$ \cite[Theorem 2.4]{BCJ}.

We are now ready to prove the main results given in the Introduction.  

\begin{proof}[Proof of Theorem~\ref{xxthm0.2}]
(1) Let $A$ be an algebra which satisfies Hypotheses~\ref{xxhyp0.1} and \ref{xxhyp0.2}.
Since $A$ is a finitely generated module over its affine center, by \cite[Proposition 3.1]{BG}, every finite length $H$-module is finite-dimensional over $\kk$. Since $A$ satisfies Hypothesis~\ref{xxhyp0.2}, by Corollary~\ref{cor:l1}, both $A\Modleft$ and $A^{op}\Modleft$ satisfy (C1) of Definition~\ref{def:star} and so $A$ satisfies (L1) and (R1).
 The assertions follow from Corollary~\ref{xxcor3.7}.

(2) Let $H$ be a weak Hopf algebra. Hypothesis~\ref{xxhyp0.2} for $H$ follows from well-known results, though we will briefly review the details. The existing references sometimes assume a weak Hopf algebra is finite-dimensional, but since we only claim that finite-dimensional objects have left duals, there is no significant change to the proofs.

The category $H \Modleft$ is obviously abelian, and it is monoidal with the operation $\otensor$ defined at the beginning of this section. The unit object is $\mathbbm{1} = H_t$, which is a left $H$-module with action $h \cdot z = \epsilon_t(hz)$ for $h \in H, z \in H_t$ \cite[Lemma 5.1.1]{NV}.  By the description of $\otensor$ as $\otimes_{H_t}$, where each $M \in H \Modleft$ has a canonical $(H_t, H_t)$-bimodule structure, since $H_t$ is semisimple, it is clear that $\otensor$ is bilinear on morphisms and biexact.  If $M \in H \Modleft$ is finite-dimensional, it has a left dual $M^* = \Hom_\kk(M, \kk)$ with $H$-module 
action $[h \cdot \phi](m) = \phi(S(h)m)$, using the antipode $S$ of $H$ \cite[Lemma 5.1.2]{NV}.  Note that we have not assumed that $S$ is bijective, and the proof 
of \cite[Lemma 5.1.2]{NV} uses $S^{-1}$.  But in fact $S^{-1}$ is only applied in the proof to elements of $H_s$, and for any weak Hopf algebra the antipode $S$ gives a bijection 
from $H_t$ to $H_s$ \cite[Theorem 2.10]{BNS}, so that $S^{-1} \vert_{H_s}$ makes sense.

This shows that the category $H \Modleft$ has the properties needed in Hypothesis~\ref{xxhyp0.2}.  Since $(H^{op,cop},S)$ is also a weak Hopf algebra (see \cite[Remark 2.4.1]{NV}), the category $\Modright H$ also has these properties, so that Hypothesis~\ref{xxhyp0.2} holds.
\end{proof}

\begin{proof}[Proof of Theorem~\ref{xxthm0.3}]
By Theorem~\ref{xxthm0.2}, $A$ satisfies Hypothesis
\ref{xxhyp4.1}. The assertion follows from Lemma 
\ref{xxlem4.2}.
\end{proof}

\begin{proof}[Proof of Theorem~\ref{xxthm0.4}]
By Theorem~\ref{xxthm0.2}, $A$ satisfies Hypothesis
\ref{xxhyp4.1}. The assertion follows from Lemma 
\ref{xxlem4.3}.
\end{proof}

\begin{proof}[Proof of Theorem~\ref{xxthm0.5}]
By Theorem~\ref{xxthm0.2}, $A$ satisfies Hypothesis
\ref{xxhyp4.1}. The assertion follows from Lemma 
\ref{xxlem4.4}.
\end{proof}

\begin{proof}[Proof of Theorem~\ref{xxthm0.6}]
(1) The assertion follows from Lemma~\ref{xxlem3.5}(1)
and Corollary~\ref{xxcor3.8}.

(2) This follows from Lemma~\ref{xxlem3.5}(2).

(3) By Lemma~\ref{xxlem3.5} and Corollary~\ref{xxcor3.8},
$A$ has a residue complex $K$ that has nonzero cohomology only at $H^{-d}(K)=\Omega$, where $d=\GKdim A$.
By Corollary~\ref{xxcor3.8}, $\Omega$ is an invertible $A$-bimodule.
It follows from the definition of residue complex that 
$\Omega$ has a minimal pure injective resolution on both sides. 
By tensoring with $\Omega^{-1}$ we obtain that $A$
has a minimal pure injective resolution on both sides. 
\end{proof}

\begin{proof}[Proof of Theorem~\ref{xxthm0.7}]
By Theorem~\ref{xxthm0.2}, $A$ satisfies Hypothesis
\ref{xxhyp4.1}. The assertion follows from Proposition
\ref{xxpro4.6}.
\end{proof}

In this paper we have focused on the class of weak Hopf algebras, which seems to be an especially fertile ground for generalizing the homological theory of infinite dimensional Hopf algebras. The conditions in Hypothesis~\ref{xxhyp0.2} are quite weak, however, and so we expect other generalizations of Hopf algebras to satisfy the analog of Theorem~\ref{xxthm0.2}(2).  We have not attempted to catalog all of the structures for which this theory applies, but mention one such further example here.

A \emph{quasi-bialgebra} is a generalization of a bialgebra for which the coproduct is not coassociative, but satisfies a weaker form of coassociativity up to twisting by a unit.
There is a natural notion of antipode $S$ for a quasi-bialgebra.  Such algebras arise naturally in the theory of tensor categories; we refer to \cite[Sections 5.13-5.15]{EGNO} for the definition and some basic properties.  Traditionally, the term quasi-Hopf algebra is reserved for quasi-bialgebras with invertible antipode.

Given a quasi-bialgebra $H$, the category $H \Modleft$ is monoidal, where for $M, N \in H \Modleft$ we have $M \otensor N = M \otimes_\kk N$ with action  $h \cdot (m \otimes n) = h_1 m \otimes h_2 n$, similarly as for Hopf algebras.  In particular, it is clear that $\otensor$ is bilinear on morphisms and biexact.  The existence of an antipode $S$ implies that every finite dimensional $M \in H \Modleft$ has a left dual $M^*$, with $H$-action given by the same formula as in the Hopf case \cite[p. 113]{EGNO}.  Analogous results hold for the category of right modules, since $H^{op, cop}$ is also a quasi-Hopf algebra with antipode $S$. Thus we have the following result.
\begin{theorem}
\label{xxthm-qh}
Let $H$ be a quasi-bialgebra with antipode.  Then $H$ satisfies Hypothesis~\ref{xxhyp0.2}, and hence if $H$ is also finite over an affine center than $H$ satisfies all of the conclusions of Theorem~\ref{xxthm0.2}(1).
\end{theorem}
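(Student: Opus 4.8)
The plan is to verify Hypothesis~\ref{xxhyp0.2} for $H$; the second assertion then follows at once by combining it with Hypothesis~\ref{xxhyp0.1} and invoking Theorem~\ref{xxthm0.2}(1). Almost all the ingredients are recorded in the paragraph preceding the theorem and in \cite[Sections 5.13--5.15]{EGNO}, so the proof is a matter of assembling them. First I would recall the monoidal structure on $H\Modleft$: for $M,N\in H\Modleft$ one sets $M\otensor N=M\otimes_\kk N$ with $h\cdot(m\otimes n)=h_1m\otimes h_2n$, which is a left $H$-module because $\Delta$ is an algebra map; the unit object is $\mathbbm{1}=\kk$ with action through $\epsilon$; and the associativity constraint is the naive associator of $\otimes_\kk$ twisted by the action of the Drinfeld associator $\Phi\in H^{\otimes 3}$, the pentagon and triangle axioms for $H\Modleft$ being exactly the defining identities of a quasi-bialgebra. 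Since $\otimes_\kk$ over a field is bilinear on morphisms and exact in each variable, so is $\otensor$.

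Next I would produce left duals for finite-dimensional objects. Given finite-dimensional $M\in H\Modleft$, put $M^*=\Hom_\kk(M,\kk)$ with $[h\cdot\phi](m)=\phi(S(h)m)$; this is a left action because $S$ is an algebra antihomomorphism. The evaluation $M^*\otensor M\to\mathbbm{1}$ and coevaluation $\mathbbm{1}\to M\otensor M^*$ are the usual linear-algebra maps, twisted by the components of $\Phi$ and by the elements $\alpha,\beta\in H$ that accompany the antipode, and one checks the two triangle (zig-zag) identities exhibiting $M^*$ as a left dual of $M$. This last verification --- carried out for quasi-Hopf algebras in \cite[Section 5.13]{EGNO} --- is the step that needs genuine care, and I expect it to be the main obstacle: one has to push the associator and the elements $\alpha,\beta$ through the composites without error. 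The essential point, already flagged in the paragraph above, is that this computation invokes only the antipode axioms and never the invertibility of $S$, so it applies to an arbitrary quasi-bialgebra with antipode, not merely to a quasi-Hopf algebra.

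Finally, the right-module side follows by symmetry. The opposite-coopposite algebra $H^{op,cop}$ is again a quasi-bialgebra with antipode $S$ (its associator is $\Phi^{-1}$ with the three tensor factors reversed), and the monoidal category $\Modright H$ is identified with $H^{op,cop}\Modleft$; applying the previous two paragraphs to $H^{op,cop}$ gives the corresponding statements for right $H$-modules. Hence $H$ satisfies Hypothesis~\ref{xxhyp0.2}. If in addition $H$ is a finitely generated module over an affine center, then it also satisfies Hypothesis~\ref{xxhyp0.1}, and Theorem~\ref{xxthm0.2}(1) applies directly, yielding all of its conclusions for $H$.
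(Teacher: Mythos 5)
Your proposal is correct and follows essentially the same route as the paper: the paper's proof consists of the paragraph preceding the theorem, which records the same monoidal structure on $H\Modleft$ (biexact since $\otensor=\otimes_\kk$), invokes \cite[p.~113]{EGNO} for the left dual $M^*=\Hom_\kk(M,\kk)$ with action through $S$, passes to $H^{op,cop}$ for the right-module side, and then applies Theorem~\ref{xxthm0.2}(1). Your added remarks on the associator, the elements $\alpha,\beta$, and the non-necessity of $S^{-1}$ only make explicit what the paper leaves to the citation.
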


\section{Further questions}
\label{xxsec8}
We conclude by posing some further questions for weak Hopf algebras.
For open questions in the Hopf case, we refer the reader to \cite{Br1, BZ} and the
survey papers \cite{Br2, Go}.

The main result of this paper (Theorem~\ref{xxthm0.2}) is a proof of the Brown--Goodearl Conjecture for weak Hopf algebras 
that satisfy Hypothesis~\ref{xxhyp0.1}. 
It is natural to ask if the Brown--Goodearl Conjecture holds for weak Hopf algebras satisfying weaker hypotheses. In particular, we ask
\begin{question}[The Brown--Goodearl Question for weak Hopf algebras]
\label{xxque8.3}
Let $H$ be a noetherian weak Hopf algebra. Does $H$ have finite injective dimension? What if we assume further that $H$ is affine and PI?
\end{question}

Again motivated by Theorem~\ref{xxthm0.2}, we ask
\begin{question}[Decomposition Question]
\label{xxque8.4}
Suppose $H$ is a noetherian weak Hopf algebra with finite
Gelfand--Kirillov dimension $d$. Is then $H$ a direct
sum of homogeneous weak Hopf subalgebras $H_i$ of 
Gelfand--Kirillov dimension $i$ for integers $0 \leq i\leq d$?
\end{question}

In \cite{WZ1}, Wu and the third-named author proved that noetherian affine PI Hopf algebras have artinian quotient rings. As a corollary of \cite[Theorem A]{Sk}, Skryabin deduced that these Hopf algebras have a bijective antipode.
In light of Theorem~\ref{xxthm0.3}, it is therefore natural to ask
\begin{question}[Skryabin's Question for weak Hopf algebras]
\label{xxque8.9}
Does every noetherian weak Hopf algebra $H$ have a bijective antipode? What if we assume further that $H$ is affine and PI?
\end{question}

\bibliographystyle{alpha}
\bibliography{biblio}{}

\end{document}